\documentclass[reqno,12pt]{amsart}
\usepackage{amsmath,amssymb,epsfig,color,here,wasysym,stmaryrd,slashbox,tikz-cd,url}

\textwidth 15.6cm
\textheight 22.4cm
\hoffset-1.7cm \voffset-.5cm

\numberwithin{equation}{section}

\newtheorem{theorem}{Theorem}[section]

\newtheorem{lemma}[theorem]{Lemma}
\newtheorem{proposition}[theorem]{Proposition}
\newtheorem{definition}[theorem]{Definition}
\newtheorem{corollary}[theorem]{Corollary}

\theoremstyle{remark}
\newtheorem{example}[theorem]{Example}
\newtheorem{remark}[theorem]{Remark}

\newcommand{\rank}{\operatorname{rk}}

\newcommand{\vanish}[1]{}

\newcommand{\id}{\operatorname{id}}

\begin{document}

\title{A  Whitney polynomial for hypermaps}

\author[Robert Cori and G\'abor Hetyei]{Robert Cori \and G\'abor Hetyei}

\address{Labri, Universit\'e Bordeaux 1, 33405 Talence Cedex, France.
\hfill\break
WWW: \tt http://www.labri.fr/perso/cori/.}

\address{Department of Mathematics and Statistics,
  UNC Charlotte, Charlotte NC 28223-0001.
WWW: \tt http://webpages.charlotte.edu/ghetyei/.}

\date{\today}
\subjclass [2010]{Primary 05C30; Secondary 05C10, 05C15}

\keywords {set partitions, noncrossing partitions, genus of a hypermap,
  Tutte polynomial, Whitney polynomial, medial map, circuit partition,
  characteristic polynomial, chromatic polynomial, flow polynomial}

\begin{abstract}
We introduce a  Whitney polynomial for hypermaps and use
it to generalize the results connecting the circuit partition
polynomial to the Martin polynomial and the results on several graph
invariants. 
\end{abstract}

\maketitle

\section*{Introduction}

The Tutte polynomial is a key invariant of graph theory, which has been
generalized to matroids, polymatroids, signed graphs and hypergraphs in
many ways. A partial list of recent topological graph and hypergraph
generalizations includes~\cite{Bernardi-embeddings, Bernardi-sandpile,
  Bernardi-Kalman-Postnikov, Kalman, Kalman-Tothmeresz}. 

The present paper generalizes a variant of the Tutte
polynomial, the Whitney rank generating function to {\em hypermaps}
which encode hypergraphs topologically embedded in a surface. This
polynomial depends on the underlying graph for maps, but different
hypermaps with the same hypergraph structure may have different Whitney
polynomials.  Our Whitney polynomial may be recursively computed using a
generalized deletion-contraction formula, and many of the famous special
substitutions (for instance, counting spanning subsets of edges, or
trees contained in the a graph) may be easily generalized to this
setting. Our approach seems to be most amenable to generalize results on
the Eulerian circuit partition polynomials, but we also have a promising
generalization of the characteristic polynomial and of the flow
polynomial. These last 
generalizations (involving the M\"obius function of the noncrossing
partition lattice) also indicate that, for hypermaps many invariants
cannot be obtained by a simple negative substitution into some
generalized Tutte polynomial. For similar reasons, a generalized Whitney
polynomial seems to work better than a generalized Tutte polynomial, and
the difference between the two should not be thought of as a mere linear
shift.   

Our paper is organized as follows. After the Preliminaries, the key
definition of our Whitney polynomial is contained in
Section~\ref{sec:whitney}. Here we also prove two recurrence formulas:
Theorem~\ref{thm:wrgrec} is a generalization of the usual recurrence
formula for the Whitney polynomial of a graph, whose recursive
evaluation yields an ever increasing number of connected components. In
Theorem~\ref{thm:wrgrecc} we show how to keep hypermaps connected: the
key ideas generalize the observation that in the computation of Tutte-Whitney
polynomials distinct components may be merged by creating a common cut
vertex. This section also contains several important specializations and
the proof of the fact that taking the dual of a planar hypermap amounts
to swapping the two variables in its Whitney polynomial.

We introduce the directed medial map of a hypermap in
Section~\ref{sec:medial}. In Proposition~\ref{prop:dmg} we show
that every directed Eulerian graph arises as the directed medial map of
a collection of hypermaps. In this section and in
Section~\ref{sec:medialc}, we generalize or prove analogues of several
results of Arratia, Bollob\'as, Ellis-Monaghan, Martin and
Sorkin~\cite{Arratia-Bollobas,Bollobas-cp,Ellis-Monaghan-mp,Ellis-Monaghan-mpmisc,Ellis-Monaghan-cpp,Ellis-Monaghan-exploring,Martin-Thesis,Martin}
on the circuit partition polynomials of Eulerian digraphs and the medial
graph of a plane graph. More precisely, we have genuine generalizations
of the results on the medial maps, and sometimes analogues on general
Eulerian digraphs. Regarding the second topic the key difference is that
a circuit partition of an Eulerian digraph represents any way to
decompose an Eulerian graph into closed paths, whereas our definitions
require that paths cannot cross at vertices. 

The visually most appealing part of our work is in
Section~\ref{sec:visual}. Here we extend the circuit partition approach
to a refined count which keeps track of circuits bounding external
(``wet'') and internal (``dry'') faces and define a process that allows
the computation of the Whitney polynomial of a planar hypermap using
paper and a scissor. 

Finally, in Sections~\ref{sec:charpoly} and \ref{sec:flowpoly} we
introduce a characteristic polynomial and a flow polynomial for
hypermaps which generalize the characteristic polynomial 
and flow polynomial of a map, as well as the characteristic polynomial
of a graded poset. We show that for hypermaps whose 
hyperedges have length at most three, these variants of the characteristic
and flow polynomials still count the admissible
colorings of the vertices, respectively the nowhere zero flows. We also
outline how to extend the duality between the chromatic polynomial and
the flow polynomial in the planar case for such hypermaps.

\section{Preliminaries}
\label{sec:prelim}

\subsection{Hypermaps}

A {\em hypermap} is a pair of permutations $(\sigma,\alpha)$
acting on the same finite set of labels, generating a transitive permutation
group. It encodes a hypergraph, topologically embedded in
a surface. Fig.~\ref{fig:hypermap} represents the planar
hypermap $(\sigma,\alpha)$ for $\sigma=(1,4)(2,5)(3)$ and
$\alpha=(1,2,3)(4,5)$. The cycles of $\sigma$ are the {\em
  vertices} of the hypermap, the cycles of $\alpha$ are its {\em
  hyperedges}. A hypermap is a {\em map} if the 
length of each cycle in $\alpha$ is at most $2$.

\begin{figure}[h]
\begin{center}
\input{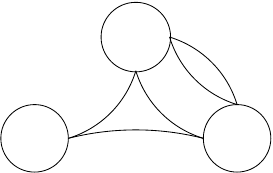_t}
\end{center}
\caption{The hypermap $(\sigma,\alpha)$}
\label{fig:hypermap} 
\end{figure}

It is helpful to use following drawing conventions for planar
hypermaps. The cycles of $\sigma$ list the exits in
counterclockwise order, and we place the labels on the left hand side of each
exit (seen from the vertex). The points on the cycles of $\alpha$ are
listed in clockwise order. The cycles of
$\alpha^{-1}\sigma=(1,5)(2,4,3)$ label then the regions
in the plane, created by the vertices and hyperedges, these are the {\em
  faces} of the hypermap.  
There is a well-known formula, due to Jacques~\cite{Jacques} determining
the smallest genus $g(\sigma,\alpha)$ of a surface on which a hypermap
$(\sigma,\alpha)$ may be drawn. This number is given by the equation 
\begin{equation}
\label{eq:genusdef}
n + 2 -2g(\sigma,\alpha) = z(\sigma) + z(\alpha) + z(\alpha^{-1}
\sigma),
\end{equation}
where $z(\pi)$ denotes the number of cycles of the permutation $\pi$.
The number $g(\sigma,\alpha)$ is always an integer and it is called {\em
  the genus of the hypermap $(\sigma,\alpha)$}. In our example, $n=5$,
$z(\sigma)=3$, $z(\alpha)=2$, $z(\alpha^{-1}\sigma)=2$ and
Equation~\eqref{eq:genusdef} gives $g(\sigma,\alpha)=0$, that is, we
have a planar hypermap. It has been shown in~\cite[Theorem~1]{Cori} that
for a circular permutation $\sigma$ and an
arbitrary permutation $\alpha$, acting on the same set of elements, the
condition $g(\sigma,\alpha)=0$ is equivalent to requiring that the
cycles of $\alpha$ list the elements of a {\em noncrossing partition} 
according to the circular order determined by $\sigma$. We may refer to
a planar hypermap, drawn in the plane respecting the above conventions,
as a {\em plane hypermap}. This terminology is somewhat similar to the
language of graph theory where a {\em plane graph}  is a planar graph
actually drawn in the plane. It is an important difference between the
classical graph theoretic approach and our
setting that definitions (such as the {\em medial graph} of a plane graph,
see below) which in the classical literature depend on the way we
draw the graph in the plane can be extended to a planar hypermap using
the structure of labels provided. The above 
drawing rules are only a way to visualize planar hypermaps in a consistent  
fashion.  

The present paper was motivated by the study of the {\em spanning
  hypertrees of a hypermap}, initiated in~\cite{Cori-hrec, Cori-Machi,
  Cori-Penaud, Machi} and
continued in~\cite{Cori-Hetyei}. A 
hypermap $(\sigma,\alpha)$ is {\em unicellular} if it
has only one face. We call a unicellular map a {\em hypertree} if its
genus is zero. A permutation
$\beta$ is a {\em refinement} of a permutation $\alpha$, if $\beta$
obtained by replacing each cycle $\alpha_i$ of $\alpha$ by a permutation
$\beta_i$ acting on the same set of points in such a way that
$g(\alpha_i,\beta_i)=0$. We will use the notation $\beta\leq \alpha$ to
denote that the permutation $\beta$ is a refinement of the permutation
$\alpha$. A hypermap $(\sigma,\beta)$ {\em spans}
the hypermap $(\sigma,\alpha)$ if $\beta$ is a refinement of
$\alpha$. Note that not all refinements $\beta$ of $\alpha$ have the
property that $(\sigma,\beta)$ is a hypermap, because the permutation
group generated by the pair $(\sigma,\beta)$ may be not transitive. We
will use the {\em 
  hyperdeletion}  and {\em hypercontraction}  operations introduced in
~\cite{Cori-Hetyei}. A {\em hyperdeletion} is the operation of replacing
a hypermap $(\sigma,\alpha)$ with the hypermap 
$(\sigma,\alpha\delta)$ where $\delta=(i,j)$ is a transposition
{\em disconnecting $\alpha$}, that is, $i$ and $j$ must belong to the
same cycle. In this paper we will work with {\em collections of
  hypermaps}, defined in Section~\ref{sec:whitney} as pairs
$(\sigma,\alpha)$ generating a not necessarily transitive permutation
group whose orbits are the {\em connected components}. Hence we may perform
hyperdeletions even if the permutation group generated by the pair
$(\sigma,\alpha\delta)$ has more orbits than the one generated by the
pair $(\sigma,\alpha)$. For maps the deletion operation corresponds to
deleting an edge $(i,j)$, and replacing them with a pair of fixed points
of $\alpha$,    

A {\em hypercontraction} is the operation of replacing a hypermap
$(\sigma,\alpha)$ with the hypermap $(\gamma\sigma,\gamma\alpha)$ where
$\gamma=(i,j)$ is a transposition disconnecting
$\alpha$ and $i$ and $j$ belong to different cycles of
$\sigma$. For maps this operation is the contraction of an edge. 
In~\cite{Cori-Hetyei} we called these operations {\em topological
  hypercontractions} and we  also considered {\em non-topological 
  hypercontractions} which arise when $i$ and $j$ belong to the same
cycle of $\sigma$. The hypercontractions of this latter type 
correspond to a special kind of vertex
splitting for maps. We will not consider non-topological contractions in
this work.

\subsection{Circuit partition polynomials and Martin polynomials}

The circuit partition polynomials of directed and undirected Eulerian
plane graphs were defined by
Ellis-Monaghan~\cite{Ellis-Monaghan-mp,Ellis-Monaghan-mpmisc} and
independently by Arratia, Bollob\'as and Sorkin~\cite{Arratia-Bollobas} (see 
also the Introduction of~\cite{Bollobas-cp}). They are variants of  
the polynomials introduced by Martin~\cite{Martin-Thesis,Martin}. 

An undirected graph is Eulerian if each of its vertices has even
degree. A {\em closed path} in a graph is a closed walk
in which repetition of edges is not allowed, but the repetition of
vertices is. An {\em Eulerian $k$-partition} of an Eulerian graph $G$ is
a partitioning of the edge set of $G$ into $k$ closed
paths. Equivalently it is the number of {\em Eulerian graph states} with
$k$ connected components: an Eulerian graph state of an Eulerian graph
$G$ is obtained by replacing each vertex $v$ of (even) degree
$\deg(v)$ with $\deg(v)/2$ vertices
of degree $2$, each new vertex is incident to a pair of edges originally
incident to $v$ in such a way that the resulting pairs of edges
partition the set of edges originally incident to $v$.
The following definition of the Martin polynomial of an undirected
Eulerian graph is given in~\cite{LasVergnas-mp} (see
also~\cite{Ellis-Monaghan-mp}):  
\begin{equation}
M(G;x)=\sum_{k,m\geq 0} f_{(k+1,m)}(G) \cdot 2^{b(G)-m} (x-2)^k 
\end{equation}  
where $b(G)$ is the number of loops in $G$, and $f_{(k+1,m)}(G)$ is the
number of Eulerian $k$-partitions of $G$, containing $m$ closed paths of
length $1$. The {\em circuit partition polynomial $J(G,x)$} of an undirected
Eulerian graph is given in~\cite{Ellis-Monaghan-cpp} by
\begin{equation}
  J(G;x)=\sum_{k\geq 0} f_k(G) x^k 
\end{equation}  
where $f_k(G)$ is the number of Eulerian states with $k$ components. 

A directed graph is Eulerian if the in-degree of each vertex is
equal to its out-degree. An Eulerian graph state of an Eulerian directed graph
$\overrightarrow{G}$ is defined similarly to the undirected graph state,
except each set of directed edges incident to a vertex $v$ must be
partitioned in a way that an incoming edge is always paired with an
outgoing edge. Hence an Eulerian graph state is always a disjoint union
of directed cycles.   The Martin polynomial of a directed Eulerian digraph
$\overrightarrow{G}$ is given in~\cite{LasVergnas-mp} by  
\begin{equation}
m(\overrightarrow{G};x)=\sum_{k\geq 0} f_{k+1}(\overrightarrow{G}) 
(x-1)^k
\end{equation}  
where $f_{k}(\overrightarrow{G})$ is the number of ways to partition the
edge set of $\overrightarrow{G}$ into $k$ disjoint cycles. In the
terminology of~\cite{Ellis-Monaghan-cpp}, it the number of Eulerian graph states
of $\overrightarrow{G}$ with $k$ components. The circuit partition
polynomial of an Eulerian digraph is defined
in~\cite{Ellis-Monaghan-cpp} by  
\begin{equation}
  j(\overrightarrow{G},x)=\sum_{k\geq 0} f_k(\overrightarrow{G}) x^k.
\end{equation}
It is an immediate consequence of the definitions that in the directed
setting we have
\begin{equation}
\label{eq:dccp}  
  j(\overrightarrow{G},x)=xm(\overrightarrow{G},x+1),
\end{equation}  
see~\cite[Eq. (1)]{Ellis-Monaghan-cpp}. 

An important example of an undirected Eulerian graph is the {\em medial graph}
$G_m$ of a plane graph $G$ is obtained by  putting a vertex on each edge
of the graph and drawing edges around the faces of $G$. The resulting
medial graph is always a $4$-regular graph, and it is connected if $G$
is connected. The {\em directed medial graph $\overrightarrow{G_m}$} is
obtained by directing the edges of $G_m$ in such a way that each
shortest circuit enclosing an original vertex of $G$ is directed
counterclockwise. The graph $\overrightarrow{G_m}$ is an Eulerian digraph.
It has been shown by Martin~\cite{Martin-Thesis,Martin} that the Tutte
polynomial $T(G;x,y)$ of a connected plane graph $G$ is connected to the Martin
polynomial of its directed medial graph $\overrightarrow{G_m}$ by the
formula
\begin{equation}
\label{eq:MartinT}  
m(\overrightarrow{G_{m}};x)=T(G;x,x).
\end{equation}  
The following generalization of this formula to not necessarily connected plane
graphs may be found in~\cite[Eq.\ (15)]{Ellis-Monaghan-cpp}: 
\begin{equation}
\label{eq:cppT}    
j(\overrightarrow{G_{m}};x)=x^{c(G)} T(G;x+1,x+1)
\end{equation}  
where $c(G)$ is the number of connected components of $G$.

\section{A Whitney polynomial of a collection of hypermaps}
\label{sec:whitney}

\begin{definition}
A {\em collection of hypermaps $(\sigma,\alpha)$} is an ordered pair of
permutations acting on the same set of points. We call the
orbits of the permutation group generated by $\sigma$ and $\alpha$ the
{\em connected components of $(\sigma,\alpha)$} and denote their number 
by $\kappa(\sigma,\alpha)$. 
\end{definition}  
Clearly the collection of hypermaps is a hypermap if and only if
$\kappa(\sigma,\alpha)=1$, that is, the permutations $\sigma$ and
$\alpha$ generate a transitive permutation group.  
\begin{definition}
\label{def:wrg}  
The {\em Whitney polynomial  $R(\sigma,\alpha;u,v)$} of a
collection of hypermaps $(\sigma,\alpha)$ on a set of $n$ points is
defined by the formula 
$$
R(\sigma,\alpha;u,v)=\sum_{\beta\leq\alpha}
u^{\kappa(\sigma,\beta)-\kappa(\sigma,\alpha)}\cdot
v^{\kappa(\sigma,\beta)+n-z(\beta)-z(\sigma)}
$$
Here the summation is over all permutations $\beta$ refining $\alpha$.
\end{definition}
Note that, for maps, Definition~\ref{def:wrg} coincides with usual
definition of the Whitney polynomial of the underlying
graph $G$. Indeed, the two-cycles of the permutation $\beta$ refining
$\alpha$ may be identified with a subset of the edge set of $G$ and the
number of edges in this subset is $n-z(\beta)$.
\begin{remark}
The Whitney polynomial of a graph is often called the {\em Whitney rank
  generating function}. The rank is the rank of the incidence
matrix of the graph over a field of characteristic zero, it is the
difference between the number of vertices and the number of connected
components. In the present setting this rank corresponds to
taking $z(\sigma)-\kappa(\sigma,\alpha)$ as the rank of a collection of
hypermaps: if we consider the cycles of $\sigma$ as vertices, and we
introduce for each ordered pair $(i,\alpha(i))$ a directed edge from the vertex
containing $i$ to the vertex containing $\alpha(i)$, the rank of the
incidence matrix of the resulting directed graph will be exactly
$z(\sigma)-\kappa(\sigma,\alpha)$. 
\end{remark}

Hypermaps are topological embeddings of generalized hypergraphs, and one
would be tempted to think that different hypermaps with the same
underlying hypergraph structure would have the same Whitney
polynomial. This is not the case, as it is shown in the next example. 
\begin{example}
Consider the hypermaps $(\sigma, \alpha)$ and $(\sigma, \alpha')$ shown
in Figure~\ref{fig:hypermap-pair}. These are given by 
$\sigma=(1,5)(2,6)(3)(4)$, $\alpha=(1,2,3,4)(5,6)$ and
$\alpha'=(1,4,2,3)(5,6)$. Both hypermaps have the same $4$ vertices,
each of them has the same edge $(5,6)$ connecting the same two vertices
and a hyperedge connecting all $4$ vertices. Despite being topological
representations of the same hypergraph, the constant term of
$R(\sigma,\alpha;u,v)$ is $5$ (contributed by the refinements
$(1,2,3,4)(5)(6)$, $(1,2,3,4)(5,6)$, 
$(1,3,4)(2)(5,6)$, $(1)(2,3,4)(5,6)$ and $(1,4)(2,3)(5,6)$ of $\alpha$),
whereas the constant term of $R(\sigma,\alpha';u,v)$ is $6$ (contributed by
the refinements $(1,4,2,3)(5)(6)$, $(1,4,2,3)(5,6)$,  
$(1,4,3)(2)(5,6)$, $(1)(2,3,4)(5,6)$, $(1,4)(2,3)(5,6)$ and
$(1,3)(2,4)(5,6)$). Indeed, the exponent of $u$ is zero if and only if
the refinement $\beta$ satisfies $\kappa(\sigma,\beta)=1$, that is
$(\sigma,\beta)$ must be a hypermap. Given this property, the exponent of
$v$ is zero exactly when $\beta$ has $3$ cycles.      
\end{example}  
\begin{figure}[h]
\begin{center}
\input{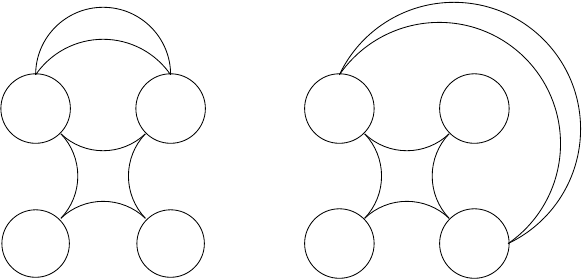_t}
\end{center}
\caption{A pair of hypermaps whose Whitney polynomials are different}
\label{fig:hypermap-pair} 
\end{figure}

The multiplicative property of the Tutte polynomial of maps has the
following obvious generalization. 
\begin{proposition}
\label{prop:product}  
Suppose the collections of hypermaps $(\sigma_1,\alpha_1)$  and
$(\sigma_2,\alpha_2)$ act on disjoint sets of points $P_1=\{1,2,\ldots,n_1\}$
respectively $P_2=\{1',2',\ldots,n_2'\}$. Let $(\sigma,\alpha)$ be the
collection of hypermaps acting on $P_1\cup P_2$ in such a way that the
restriction of $\sigma$ to $P_i$ is $\sigma_i$ and the restriction of
$\alpha$ to $P_i$ is $\alpha_i$ for $i=1,2$. Then
$$
R(\sigma,\alpha;u,v)=R(\sigma_1,\alpha_1;u,v)\cdot R(\sigma_2,\alpha_2;u,v)
$$
\end{proposition}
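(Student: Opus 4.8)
The plan is to exploit the fact that, because every cycle of $\alpha$ is contained entirely in $P_1$ or entirely in $P_2$, the refinement order decomposes as a direct product over the two blocks. First I would observe that a permutation $\beta$ refines $\alpha$ if and only if $\beta$ can be written uniquely as $\beta=\beta_1\beta_2$, where $\beta_i$ acts on $P_i$ and $\beta_i\leq\alpha_i$ for $i=1,2$. Indeed, refining $\alpha$ means replacing each cycle $\alpha_j$ of $\alpha$ by a permutation $\beta_j$ on the same point set with $g(\alpha_j,\beta_j)=0$; since each cycle of $\alpha$ lies in exactly one of $P_1,P_2$, collecting the replacements of those cycles lying inside $P_i$ yields a permutation $\beta_i\leq\alpha_i$ supported on $P_i$, and conversely any such pair $(\beta_1,\beta_2)$ assembles to a refinement of $\alpha$. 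This gives a bijection between the index set $\{\beta\leq\alpha\}$ of the defining sum for $R(\sigma,\alpha;u,v)$ and the product $\{\beta_1\leq\alpha_1\}\times\{\beta_2\leq\alpha_2\}$.

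Next I would check that all three statistics occurring in the exponents of Definition~\ref{def:wrg} are additive under this decomposition. Since $\sigma$ and $\beta$ both preserve the partition $P_1\cup P_2$, every orbit of the permutation group $\langle\sigma,\beta\rangle$ lies in a single block $P_i$ and is exactly an orbit of $\langle\sigma_i,\beta_i\rangle$; hence $\kappa(\sigma,\beta)=\kappa(\sigma_1,\beta_1)+\kappa(\sigma_2,\beta_2)$, and the same identity holds with $\beta$ replaced by $\alpha$. Likewise $n=n_1+n_2$, while $z(\sigma)=z(\sigma_1)+z(\sigma_2)$ and $z(\beta)=z(\beta_1)+z(\beta_2)$, because the cycles of $\sigma$ (resp.\ of $\beta$) are partitioned according to which block they lie in. Consequently both $\kappa(\sigma,\beta)-\kappa(\sigma,\alpha)$ and $\kappa(\sigma,\beta)+n-z(\beta)-z(\sigma)$ split as the sum of the corresponding quantities for $(\sigma_1,\beta_1)$ and $(\sigma_2,\beta_2)$.

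Finally, substituting these identities into the defining formula, the monomial indexed by $\beta$ factors as the product of the monomials indexed by $\beta_1$ and $\beta_2$, so summing over the product index set turns the defining sum of $R(\sigma,\alpha;u,v)$ into the product of the two defining sums, which is exactly $R(\sigma_1,\alpha_1;u,v)\cdot R(\sigma_2,\alpha_2;u,v)$. The only point requiring care is the first step, namely verifying that the refinement relation genuinely decomposes as a direct product over the two blocks, i.e.\ that no refinement of $\alpha$ can mix points of $P_1$ with points of $P_2$; but this is immediate from the cycle-by-cycle nature of the definition of $\leq$, since each cycle of $\alpha$ already lives in a single block. I do not expect any real obstacle beyond this bookkeeping.
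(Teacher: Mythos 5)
Your proposal is correct and follows essentially the same route as the paper: the refinements of $\alpha$ decompose as independent choices of refinements of $\alpha_1$ and $\alpha_2$, and all the statistics in the exponents ($\kappa$, $z$, $n$) are additive over the two blocks, so the defining sum factors. You merely spell out in more detail the bijection and the additivity checks that the paper states without elaboration.
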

\begin{proof}
The restrictions $\beta_i$ to $P_i$  of a
refinement $\beta$ of $\alpha$ may be chosen independently for $i=1,2$.
Furthermore, we have
$\kappa(\sigma,\beta)=\kappa(\sigma_1,\beta_1)+\kappa(\sigma_2,\beta_2)$,
$\kappa(\sigma,\alpha)=\kappa(\sigma_1,\alpha_1)+\kappa(\sigma_2,\alpha_2)$,
$z(\alpha)=z(\alpha_1)+z(\alpha_2)$, $z(\beta)=z(\beta_1)+z(\beta_2)$, 
$z(\sigma)=z(\sigma_1)+z(\sigma_2)$, and $n=n_1+n_2$. 
\end{proof}
The function
$R(\sigma,\alpha;u,v)$ may be computed recursively using the generalized
deletion-contraction formula stated in Theorem~\ref{thm:wrgrec}  below.
In the proof of this theorem the following lemma plays a key role.
\begin{lemma}
\label{lem:refinement}
Consider the circular permutation $\alpha=(1,2,\ldots,m)$ for some
$m\geq 2$ and let $\beta$ be a refinement of $\alpha$ satisfying
$\beta(1)=k$. Then the permutation
\begin{equation}
\label{eq:beta'}
\beta'=\begin{cases}
\beta & \mbox{if $k=1$}\\
(1,k)\beta & \mbox{otherwise}\\
\end{cases}
\end{equation}
is a refinement of $(1)(2,3,\ldots,m)$. Furthermore, for $k\not\in \{1,2\}$, the
permutation $\beta'$ is also a refinement of
$(1)(2,\ldots,k-1)(k,k+1,\ldots,m)$. 
\end{lemma}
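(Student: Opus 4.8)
The plan is to unwind the definition of refinement in terms of the genus formula~\eqref{eq:genusdef} and to track how the three cycle-counts $z(\sigma)$, $z(\alpha)$, $z(\alpha^{-1}\sigma)$ change when we pass from $\alpha=(1,2,\ldots,m)$ to the smaller circular permutations and simultaneously modify $\beta$. Recall that $\beta\le\alpha$ means each cycle of $\alpha$ is refined by a permutation of genus zero on the same points; since $\alpha$ is a single $m$-cycle, $\beta\le(1,2,\ldots,m)$ is exactly the statement that $g\bigl((1,2,\ldots,m),\beta\bigr)=0$, i.e.\ by Jacques' formula $z(\beta)+z\bigl(\beta^{-1}(1,2,\ldots,m)\bigr)=m+1$. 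The right-hand side $(1)(2,3,\ldots,m)$ (resp.\ $(1)(2,\ldots,k-1)(k,\ldots,m)$) splits into cycles of total length $m$, so the refinement condition there unpacks into the analogous numerical identity for $\beta'$ on each block.

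First I would treat the trivial case $k=1$: then $1$ is a fixed point of $\beta$, so $\beta=(1)\widehat\beta$ where $\widehat\beta$ acts on $\{2,\ldots,m\}$, and $g\bigl((1,2,\ldots,m),\beta\bigr)=g\bigl((2,3,\ldots,m),\widehat\beta\bigr)$ because removing a common fixed point drops $z(\sigma)$, $z(\alpha)$ and $n$ each by one while leaving $z(\alpha^{-1}\sigma)$ unchanged; hence $\beta'=\beta$ refines $(1)(2,3,\ldots,m)$ automatically, and the ``furthermore'' clause is vacuous since $k\in\{1\}$ is excluded there. For the main case $k\ge2$, set $\beta'=(1,k)\beta$. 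The key computation is to compare $g\bigl((1,2,\ldots,m),\beta\bigr)$ with $g\bigl((1)(2,3,\ldots,m),\beta'\bigr)$: multiplying $\beta$ on the left by the transposition $(1,k)$ moves $1$ out of $\beta$'s cycle (it splits off $1$ as a fixed point of $\beta'$, since $\beta'(1)=(1,k)\beta(1)=(1,k)(k)=1$), so $z(\beta')=z(\beta)+1$; meanwhile $(1)(2,3,\ldots,m)=(1,2,\ldots,m)\cdot(1,2)^{-1}\cdots$ — more usefully, one checks directly that $\bigl((1)(2,3,\ldots,m)\bigr)^{-1}\beta' = (2,3,\ldots,m)^{-1}(1,k)\beta$ has the same number of cycles as $(1,2,\ldots,m)^{-1}\beta$. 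Feeding these into~\eqref{eq:genusdef} shows the genus is preserved, hence zero, which is exactly $\beta'\le(1)(2,3,\ldots,m)$ (the fixed point $1$ forms its own genus-zero block trivially).

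For the final assertion, suppose additionally $k\notin\{1,2\}$. Now $\beta'$ already fixes $1$, and I must show that $\beta'$ restricted to $\{2,\ldots,k-1\}$ and to $\{k,k+1,\ldots,m\}$ each has genus zero against the respective cycle $(2,\ldots,k-1)$, $(k,\ldots,m)$. The point is that in $\beta$, the edge from $1$ to $k=\beta(1)$ was the unique ``crossing'' between these two arcs of the $m$-cycle; after replacing $\beta$ by $(1,k)\beta$ and deleting the fixed point $1$, the noncrossing-partition characterization of genus zero (via~\cite[Theorem~1]{Cori}: $\beta$ noncrossing w.r.t.\ the cycle $\alpha$) localizes to each arc separately, because a noncrossing partition of a cycle, once we cut the cycle at the chord $\{1,k\}$, restricts to noncrossing partitions of the two resulting sub-arcs. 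Concretely I would verify the cycle-count identity $z(\beta'|_{\{2,\ldots,k-1\}})+z\bigl((2,\ldots,k-1)^{-1}\beta'|\bigr) + z(\beta'|_{\{k,\ldots,m\}})+z\bigl((k,\ldots,m)^{-1}\beta'|\bigr) = (k-2)+1+(m-k+1)+1$, which follows by adding the genus-zero identities on the two blocks and comparing with the already-established genus-zero identity for $\beta'$ on $\{2,\ldots,m\}$ against $(2,3,\ldots,m)$.

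The main obstacle will be the bookkeeping in that last step: one must be careful that multiplying by $(1,k)$ does not just split off the point $1$ but also does not secretly glue or separate any other pair of arcs, and that the permutation $(2,3,\ldots,m)$ genuinely ``factors'' through $(2,\ldots,k-1)(k,\ldots,m)$ relative to $\beta'$ — i.e.\ that $\beta'$ sends no point of $\{2,\ldots,k-1\}$ into $\{k,\ldots,m\}$ or vice versa. This non-crossing fact is precisely where $\beta\le\alpha$ (not merely ``$\beta$ is some refinement'') is used, and it is cleanest to extract it from~\cite[Theorem~1]{Cori} rather than to re-derive it by a genus computation; alternatively, an induction on $m$ using the already-proved main case as the inductive step handles the block $(k,\ldots,m)$ and then a symmetric argument (reading the cycle in the reverse direction) handles $(2,\ldots,k-1)$.
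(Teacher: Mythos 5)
Your proposal ultimately lands on the paper's own argument: the entire content of the lemma is the noncrossing separation at the chord $\{1,k\}$. Since $\beta\le\alpha$ and the cycle of $\beta$ through $1$ begins $1\mapsto k$, that cycle is contained in $\{1\}\cup\{k,\ldots,m\}$ and every other cycle of $\beta$ lies entirely inside $\{2,\ldots,k-1\}$ or entirely inside $\{k+1,\ldots,m\}$; hence $\beta$ refines $(1,k,k+1,\ldots,m)(2,\ldots,k-1)$, and left multiplication by $(1,k)$ merely splits off $1$ as a fixed point. This is exactly the paper's two-sentence proof, and your treatment of the ``furthermore'' clause reproduces it.

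The step I would not let stand is the claim, in your argument for the first assertion, that ``one checks directly'' that $\bigl((1)(2,\ldots,m)\bigr)^{-1}\beta'$ has the same number of cycles as $\alpha^{-1}\beta$. For $k\ge 3$ this is not a formal identity: writing $(1)(2,\ldots,m)=(1,2)\alpha$, the quantity in question is $z\bigl(\alpha^{-1}(1,k,2)\beta\bigr)$, and inserting a $3$-cycle can change a cycle count by $0$ or $\pm 2$. The asserted equality is in fact equivalent to the genus-zero conclusion you are proving, and it fails without the refinement hypothesis: for $m=4$ and the crossing (genus-one) pair $\beta=(1,3)(2,4)$, so that $k=3$, one has $z(\alpha^{-1}\beta)=1$ but $z\bigl(((1)(2,3,4))^{-1}(1,3)\beta\bigr)=z\bigl((1)(2,3)(4)\bigr)=3$. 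So this cycle count cannot simply be ``fed into'' the genus formula; it has to be extracted from the noncrossing structure. Your proposal survives because the separation argument you give for the second assertion already implies the first: for $k\ge 3$ the permutation $(1)(2,\ldots,k-1)(k,\ldots,m)$ refines $(1)(2,\ldots,m)$ and the refinement order is transitive, while for $k=2$ one genuinely has $\bigl((1)(2,\ldots,m)\bigr)^{-1}\beta'=\alpha^{-1}\beta$ and the count goes through, and $k=1$ is trivial. I would either delete the genus-formula detour or route it explicitly through the noncrossing decomposition, as the paper does.
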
  
\begin{proof}
Since $\beta$ takes $1$ into $k$, the
point $1$ is always a fixpoint of $\beta'$ and $\beta'$ is always a
refinement of $(1)(2,\ldots,m)$. In the case when  
$k\not\in\{1,2\}$ holds, the cycles of $\beta$ refine the permutation
$(1,k,k+1,\ldots,m)(2,\ldots,k-1)$, and the cycles of
$\beta'=(1,k)\beta$ refine the permutation
$(1)(k,k+1,\ldots,m)(2,\ldots,k-1)$. 
\end{proof}  

To state our generalized deletion-contraction formula we now define
operations on a collection of hypermaps.  
\begin{definition}
\label{def:phik}
Let $(\sigma,\alpha)$ be a collection of hypermaps and assume that
$(1,2,\ldots,m)$ is a cycle of $\alpha$ of length at least $2$. For each
$k\in\{1,2,\ldots,m\}$ we define the collection of hypermaps
$\phi_k(H)=(\sigma_k,\alpha_k)$ where
$$
\sigma_k=
\begin{cases}
  \sigma & \mbox{if $1$ and $k$ belong to the same cycle of $\sigma$,}\\
  (1,k)\sigma & \mbox{otherwise;}\\
\end{cases}  
$$
and the permutation $\alpha_k$ is obtained from $\alpha$ by replacing
  the cycle $(1,2,\ldots,m)$  with 
  $(1)(2,\ldots,m)$ if $k\in\{1,2\}$ and 
  $(1)(2,\ldots,k-1)(k,\ldots,m)$ if $k\not\in\{1,2\}$.
\end{definition}
It is worth noting that the definition of $\phi_k(H)$ may be
equivalently rewritten as follows. 
\begin{equation}
\label{eq:Hrule}  
\phi_k(H)=
\begin{cases}
((1,k)\sigma,(1,k)\alpha(1,k-1))& \mbox{if $z((1,k)\sigma)\leq z(\sigma)$},\\
(\sigma,(1,k)\alpha(1,k-1))& \mbox{otherwise}.  
\end{cases}  
\end{equation}
In rule~(\ref{eq:Hrule}) we count modulo $m$, that is, we replace $k-1$
  with $m$ if $k=1$, and we read $(1,1)$ as a shorthand for the identity
  permutation. In particular, for a collection of maps we must have $m=2$, the
  collection $\phi_1(\sigma,\alpha)$ is obtained from $(\sigma,\alpha)$
  by deleting the edge $(1,2)$. Furthermore, in the case when $(1,2)$ is
  not a loop, $\phi_2(\sigma,\alpha)$ is obtained from $(\sigma,\alpha)$
  is obtained from $(\sigma,\alpha)$ by contracting $(1,2)$.

\begin{theorem}
\label{thm:wrgrec}  
Let $H=(\sigma,\alpha)$ be a collection of hypermaps on the set
$\{1,2,\ldots,n\}$ and assume that $(1,2,\ldots,m)$ is a cycle of
$\alpha$ of length at least $2$. Then the Whitney polynomial $R(H;u,v)$
is given by the sum
$$
R(H;u,v)=\sum_{k=1}^m R(\phi_k(H);u,v)\cdot w_k,
$$
where each $w_k$ is a monomial from the set $\{1,u,v,uv\}$, given by the
equations 
\begin{equation}
\label{eq:wrule}
w_k=
\begin{cases}
u^{\kappa(\phi_k(H))-\kappa(H)}v & \mbox{if $k\neq 1$ and $1$ and $k$ belong to
  the same cycle of $\sigma$};\\  
u^{\kappa(\phi_k(H))-\kappa(H)}& \mbox{otherwise}.\\
\end{cases}  
\end{equation}
\end{theorem}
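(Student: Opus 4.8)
The plan is to analyze how the refinements of $\alpha$ partition according to the image $\beta(1)$, and to match each class with a refinement of the appropriate $\alpha_k$ via Lemma~\ref{lem:refinement}. Write $\alpha = (1,2,\ldots,m)\cdot\alpha''$ where $\alpha''$ is the product of the remaining cycles. Every refinement $\beta$ of $\alpha$ splits as $\beta = \gamma\cdot\beta''$, where $\gamma$ is a refinement of the single cycle $(1,2,\ldots,m)$ and $\beta''$ is a refinement of $\alpha''$; the factor $\beta''$ ranges freely and independently of $\gamma$. So I would first sort the refinements of $\alpha$ by the value $k=\beta(1)$, which is the same as $\gamma(1)$. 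For each fixed $k$, Lemma~\ref{lem:refinement} tells me that $\beta' := \beta$ if $k=1$ and $\beta' := (1,k)\beta$ otherwise is a refinement of $\alpha_k$ (using both parts of the lemma: the $k\in\{1,2\}$ case gives a refinement of $(1)(2,\ldots,m)\alpha''$, the $k\notin\{1,2\}$ case a refinement of $(1)(2,\ldots,k-1)(k,\ldots,m)\alpha''$). Conversely, given a refinement $\beta'$ of $\alpha_k$, one recovers $\beta$ by $\beta = \beta'$ if $k=1$ and $\beta = (1,k)\beta'$ otherwise; I would check this is a genuine inverse, so that for each $k$ we get a bijection between $\{\beta\le\alpha : \beta(1)=k\}$ and $\{\beta'\le\alpha_k\}$ — at least up to the mild subtlety that $\beta'$ always fixes $1$, so for $k=1$ the map $\beta'\mapsto\beta'$ lands exactly in the refinements of $\alpha$ sending $1$ to $1$, which is what we want.

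Having set up the bijection, the second step is to track the exponents of $u$ and $v$. For the chosen $\beta$ and its partner $\beta' = $ (the refinement associated with $\phi_k(H)$), I need to compare the four quantities $\kappa(\sigma,\beta)$, $z(\beta)$ against $\kappa(\sigma_k,\beta')$, $z(\beta')$, and also $\kappa(\sigma,\alpha)$, $z(\sigma)$ against $\kappa(\sigma_k,\alpha_k)$, $z(\sigma_k)$. The cycle count is easy: passing from $\alpha$'s cycle $(1,2,\ldots,m)$ to two cycles raises $z$ by exactly $1$, and $\gamma\mapsto\gamma'=(1,k)\gamma$ (for $k\ne 1$) also changes $z$ by one — so I expect $z(\beta') = z(\beta)$ when $k=1$ and $z(\beta')=z(\beta)+1$ when $k\ne 1$ (one needs that multiplying $\gamma$ by $(1,k)$ where $1$ and $k$ lie in different $\gamma$-cycles splits off $1$ as a fixed point and increases $z(\gamma)$ by one — this follows because $\gamma$ is a refinement of a single cycle, so $1$ and $k$ cannot be in the same $\gamma$-cycle unless... actually they can, e.g. $\gamma$ itself, so care is needed here; see below). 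Similarly for $\sigma$ versus $\sigma_k$: by Definition~\ref{def:phik}, $\sigma_k = \sigma$ exactly when $1,k$ are in the same $\sigma$-cycle, in which case $z(\sigma_k)=z(\sigma)$, and $\sigma_k=(1,k)\sigma$ otherwise, so $z(\sigma_k) = z(\sigma)\pm 1$. The key point, which I would verify, is that the effect of left-multiplying by $(1,k)$ on $z$ is \emph{the same} for the pair $(\sigma,\sigma_k)$ as for the pair $(\beta,\beta')$ — because whether $1$ and $k$ lie in the same cycle is a property of the common group-theoretic data. Combining, the net exponent of $v$ contributed is $\kappa(\sigma_k,\beta') + n - z(\beta') - z(\sigma_k)$ compared to $\kappa(\sigma,\beta) + n - z(\beta) - z(\sigma)$; the bookkeeping should leave exactly the correction $w_k$ claimed in \eqref{eq:wrule}, with the extra factor $v$ appearing precisely when $k\ne1$ and $1,k$ are in the same $\sigma$-cycle (so $z(\sigma_k)=z(\sigma)$ but $z(\beta')=z(\beta)+1$, a mismatch of one in the $v$-exponent), and the $u^{\kappa(\phi_k(H))-\kappa(H)}$ factor accounting for the change in number of connected components between $(\sigma,\alpha)$ and $(\sigma_k,\alpha_k)$ (which does not depend on $\beta$).

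The main obstacle I anticipate is the sign/cycle analysis of left multiplication by the transposition $(1,k)$: the standard fact is that $z((i,j)\pi) = z(\pi)+1$ if $i,j$ are in the same $\pi$-cycle and $z(\pi)-1$ if they are in different cycles, and I would need to apply this carefully and consistently to $\gamma$ (the restriction of $\beta$ to the first cycle-block), to $\beta$ itself, and to $\sigma$, making sure the ``same cycle vs.\ different cycle'' dichotomy is evaluated with respect to the right permutation in each case. In particular the role of $\beta(1)=k$ is exactly that it forces $1$ and $k$ to be in the same $\beta$-cycle, so $z((1,k)\beta) = z(\beta)+1$ whenever $k\ne 1$ — this is the clean statement I would lean on, replacing the slightly delicate $\gamma$-argument sketched above. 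Once this lemma-level fact is in hand, the rest is the routine exponent accounting described above, together with invoking the already-established product formula (Proposition~\ref{prop:product}) implicitly through the independence of the $\beta''$-factor. I would also double-check the boundary behavior of the ``count modulo $m$'' convention in \eqref{eq:Hrule} (the $k=1$ case reading $(1,1)$ as the identity and $k-1$ as $m$) to be sure the two formulations of $\phi_k(H)$ agree and that the $k=1$ term of the sum is handled correctly — this is where the $w_1 = u^{\kappa(\phi_1(H))-\kappa(H)}$ (no $v$) case sits.
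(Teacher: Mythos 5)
Your plan follows the paper's proof: partition the refinements of $\alpha$ by $k=\beta(1)$, biject each class with the refinements of $\alpha_k$ via $\beta\mapsto(1,k)\beta$ using Lemma~\ref{lem:refinement}, and then track exponents. Your $z$-bookkeeping is essentially right; in particular the observation that $\beta(1)=k$ forces $1$ and $k$ into the same cycle of $\beta$, whence $z((1,k)\beta)=z(\beta)+1$ for $k\neq 1$, is exactly the fact the paper uses, and your conclusion that the extra factor $v$ appears precisely when $k\neq 1$ and $1,k$ share a $\sigma$-cycle is the paper's Case 3.

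Two points need repair. First, the sentence asserting that left multiplication by $(1,k)$ has ``the same'' effect on $z$ for the pair $(\sigma,\sigma_k)$ as for $(\beta,\beta')$ is false as stated: in the case where $\sigma_k=(1,k)\sigma$ is actually applied, the points $1$ and $k$ lie in \emph{different} cycles of $\sigma$, so $z(\sigma_k)=z(\sigma)-1$ while $z(\beta')=z(\beta)+1$; the two shifts are opposite and cancel in the $v$-exponent, which is exactly why no $v$ appears in that case. Your concrete follow-up sentence gets the right answer, so this is a slip of phrasing rather than of substance. Second, and more seriously, both the $u$-exponent and the $v$-exponent of $w_\beta$ contain $\kappa(\sigma,\beta)$, so your accounting silently requires $\kappa(\sigma,\beta)=\kappa(\sigma_k,\beta')$ for every $\beta$ in the class; you list this comparison as something to do but never supply an argument, and it is the one genuinely nontrivial verification in the proof. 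The paper handles it with an orbit argument: when $1$ and $k$ lie in different cycles of $\sigma$, passing from $(\sigma,\beta)$ to $((1,k)\sigma,(1,k)\beta)$ breaks a cycle of $\beta$ into two but simultaneously merges the two $\sigma$-cycles through $1$ and $k$, so the orbits of the generated group are unchanged; when $1$ and $k$ lie in the same cycle of $\sigma$, the two halves of the broken $\beta$-cycle remain joined through that common $\sigma$-cycle. Without this step, the claim that the $u$-correction is the constant $u^{\kappa(\phi_k(H))-\kappa(H)}$, independent of $\beta$, and the claim that the $v$-discrepancy is governed solely by the $z$-counts, are both unsupported.
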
  
\begin{proof}
For each $k\in\{1,2,\ldots,m\}$, we use Lemma~\ref{lem:refinement} to
establish a bijection between all refinements $\beta$ of $\alpha$
satisfying $\beta(1)=k$ and all refinements $\beta'$ of $\alpha_k$ in
such a way that the contribution $w_{\beta}$ of $\beta$ to $R(H;u,v)$
is the same as the contribution $w_{\beta'}$ of $\beta'$ to
$R(\phi_k(H);u,v)$ multiplied by $w_k$.  
We define $\beta'$ using Equation~(\ref{eq:beta'})
and apply Lemma~\ref{lem:refinement} to the cycles of $\beta$ and
$\beta'$ contained in the set $\{1,2,\ldots,m\}$. 
As seen in  Lemma~\ref{lem:refinement}, $\beta'$ has  $1$ as a fixed
point hence it is a refinement of the permutation $\alpha_k$.
Conversely, given any permutation $\beta'$ that
is a refinement of $\alpha_k$, the permutation
$\beta=(1,k)\beta'$ is a refinement of $\alpha$ sending $1$ into
$k$. The involution $\beta\mapsto \beta'$ is a bijection between all
refinements $\beta$ of $\alpha$ taking $1$ into $k$ and all refinements
$\beta'$ of  $\alpha_k$. We only need to verify that formula~\eqref{eq:wrule} is
correct and it always yields a monomial from the set $\{1,u,v,uv\}$.
We distinguish three cases. 

{\bf\noindent Case 1:} $k=1$ holds. In this case we have $\sigma_1=\sigma$,
$\beta_1=\beta$, and $\alpha_1$ is obtained from $\alpha$ by replacing
the cycle $(1,2,\ldots,m)$ with $(1)(2,\ldots,m)$. We have
$\kappa(\sigma_1,\beta_1)=\kappa(\sigma,\beta)$ hence the exponent of $u$
in $w_1$ must be $\kappa(\phi_1(H))-\kappa(H)$. The exponent of $v$
is the same, term by term, in $w_\beta$ and $w_{\beta'}$, hence the
exponent of $v$ in $w_1$ is zero. Note finally that $\phi_1(H)$ is
obtained from $H$ by replacing the cycle $(1,2,\ldots,m)$ of $\alpha$
with two cycles, this increases the number of connected components by at
most $1$. We obtain that $w_1\in\{1,u\}$.   

{\bf\noindent Case 2:} $k>1$ holds and $1$ and $k$ belong to
different cycles of $\sigma$. In this case the orbits
of the permutation group generated by $\sigma$ and $\alpha$ are the same
as those of the permutation group generated by $\sigma_k=(1,k)\sigma$
and $(1,k)\alpha$: $(1,k)\alpha$ is obtained from
$\alpha$ by replacing $(1,\ldots,m)$ with $(1,\ldots,k-1)(k,\ldots,m)$,
the cycle containing $1$ and $k$ is broken into two cycles, but
replacing $\sigma$ with $\sigma_k$ merges the two cycles of $\sigma$
that contain $1$ and $k$. Similarly, the orbits of the permutation group
generated by $\sigma$ and $\beta$ are the same as those of the permutation group
generated by $\sigma_k$ and $\beta'=(1,k)\beta$. Since we also have
$z(\beta')=z(\beta)+1$ and $z(\sigma_k)=z(\sigma)-1$, we obtain
$$w_{\beta}=
u^{\kappa(\sigma,\beta)-\kappa(\sigma,\alpha)}\cdot
v^{\kappa(\sigma,\beta)+n-z(\beta)-z(\sigma)}
=u^{\kappa(\sigma_k,\beta')-\kappa(\sigma_k,(1,k)\alpha)}\cdot
v^{\kappa(\sigma_k,\beta')+n-z(\beta')-z(\sigma_k)}.$$ 
Replacing $u^{-\kappa(\sigma_k,(1,k)\alpha)}$ with
$u^{-\kappa(\sigma_k,\alpha_k)}$  yields an additional factor of $u$
if and only if breaking the cycle $(1,\ldots,k-1)$ of $(1,k)\alpha$ into
$(1)(2,\ldots,k-1)$ increases the number of connected components by
one. We obtain $w_k\in \{1,u\}$. 

{\bf\noindent Case 3:} $k\neq 1$ and the points $1$ and $k$ belong to
the same cycle of $\sigma$. In this case the orbits of the permutation
group generated by $\sigma$ and $\alpha$ are the same as those of the
permutation group generated by $\sigma_k=\sigma$ and $(1,k)\alpha$. Indeed,
the cycles of $(1,k)\alpha$ are obtained by replacing the cycle
$(1,2,\ldots,m)$ with the cycles $(1,2,\ldots,k-1)$ and
$(k,\ldots,m)$. Hence each orbit of the permutation group generated by
$\sigma$ and $(1,k)\alpha$ is contained in an orbit of the
permutation group generated by $\sigma$ and $\alpha$. We only need to
show that subdividing the cycle $(1,2,\ldots,m)$ into two cycles does
not increase the number of orbits. This is obviously true, as we may use
the cycle of $\sigma$ that contains both $1$ and $k$ to reach $k$ from
$1$. Similarly, the orbits of the permutation
group generated by $\sigma$ and $\beta$ are the same as those of the
permutation group generated by $\sigma_k$ and $\beta'=(1,k)\beta$. So far we
obtained that $\kappa(\sigma,\alpha)=\kappa(\sigma_k,(1,k)\alpha)$ and
$\kappa(\sigma,\beta)=\kappa(\sigma_k,\beta')$ hold. Observe finally
that $z(\beta')=z(\beta)+1$ since $\beta$ contains a cycle in which
$1$ is immediately followed by $k$ and $\beta'=(1,k)\beta$ is obtained by
breaking off the cycle $(1)$ from this cycle. Hence to obtain the
contribution $w_{\beta}$ of $(\sigma,\beta)$ to $R(H;u,v)$ from the
contribution $w(\beta')$ of $(\sigma_k,\beta')$ to
$R(\phi_k(H);u,v)$ we must always add a factor of $v$ in
this case. Similarly to the previous case, we must also add a factor of
$u$ whenever
$\kappa(\sigma,(1,k)\alpha)+1=\kappa(\sigma,\alpha_k)$
holds. We obtain $w_k\in \{v,uv\}$.
\end{proof}  

\begin{example}
Consider the case when $(\sigma,\alpha)$ is a map and $(1,2)$ is a cycle
of $\alpha$. The substitution $k=1$ is covered in Case~1 of the proof of
Theorem~\ref{thm:wrgrec} above. The collection of hypermaps
$(\sigma,\alpha(1,2))$ is obtained from $(\sigma,\alpha)$ by deleting
the edge corresponding to the cycle $(1,2)$ and we must add a factor of
$u$ exactly when this edge is a bridge. The case covering
substitution $k=2$ depends on the relative position of $1$ and $2$ on
the cycles of $\sigma$. If the edge corresponding to the cycle $(1,2)$
is a loop then we are in Case~3 and the collection of hypermaps
$(\sigma,(1,2)\alpha)$ is obtained from $(\sigma,\alpha)$ by deleting
this loop edge in the underlying graph. If the edge corresponding to
$(1,2)$ is not a loop then the collection of hypermaps
$((1,2)\sigma,(1,2)\alpha)$ is obtained from $(\sigma,\alpha)$ by
contracting this edge (no additional factor of $u$ will occur as
$(1,2)\alpha(1,1)=(1,2)\alpha$). We obtain the following well-known
recurrences for the Whitney polynomial $R(G;u,v)$ of the
underlying graph $G$:
$$R(G;u,v)=
\begin{cases}
  (1+u)R(G/e;u,v)&\text{if $e$ is a bridge;}\\
  (1+v)R(G-e;u,v)&\text{if $e$ is a loop;}\\
  R(G-e;u,v)+R(G/e;u,v)&\text{otherwise}.\\
\end{cases}
$$
\end{example}
In analogy to the case of maps, Theorem~\ref{thm:wrgrec} may be modified
in such a way that for a hypermap $(\sigma,\alpha)$ the recurrence only
involves hypermaps. The validity of the modified recurrence depends on
the following observation, generalizing the fact that the Tutte polynomial
of a graph does not change if we merge two vertices that belong to
different connected components.  
\begin{lemma}
\label{lem:merge}  
Let $(\sigma,\alpha)$ be a collection of hypermaps with at least two
connected components. If the points $i$ and $j$ belong to different
connected components then
$$
R(\sigma,\alpha;u,v)=R((i,j)\sigma,\alpha;u,v).
$$
\end{lemma}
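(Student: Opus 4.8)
The plan is to verify that the substitution $\sigma\mapsto(i,j)\sigma$ leaves each individual summand of the defining formula
$R(\sigma,\alpha;u,v)=\sum_{\beta\le\alpha}u^{\kappa(\sigma,\beta)-\kappa(\sigma,\alpha)}\,v^{\kappa(\sigma,\beta)+n-z(\beta)-z(\sigma)}$
unchanged. The summation index (the set of refinements $\beta$ of $\alpha$), the number of points $n$, and each $z(\beta)$ do not depend on $\sigma$, so it is enough to establish, for every refinement $\beta\le\alpha$ (including $\beta=\alpha$ itself), the two equalities $z((i,j)\sigma)=z(\sigma)-1$ and $\kappa((i,j)\sigma,\beta)=\kappa(\sigma,\beta)-1$. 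Granting these, the exponent of $u$ becomes $(\kappa(\sigma,\beta)-1)-(\kappa(\sigma,\alpha)-1)=\kappa(\sigma,\beta)-\kappa(\sigma,\alpha)$ and the exponent of $v$ becomes $(\kappa(\sigma,\beta)-1)+n-z(\beta)-(z(\sigma)-1)=\kappa(\sigma,\beta)+n-z(\beta)-z(\sigma)$, both unchanged, and summing over $\beta$ finishes the proof.

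For the first equality I would argue that $i$ and $j$, lying in different connected components of $(\sigma,\alpha)$, lie a fortiori in different cycles of $\sigma$ (each cycle of $\sigma$ is contained in a single orbit of the group generated by $\sigma$ and $\alpha$); multiplying a permutation by a transposition of two points in distinct cycles merges those cycles, exactly as in the proof of Theorem~\ref{thm:wrgrec}, so $z((i,j)\sigma)=z(\sigma)-1$. I would also record the companion observation that separation is inherited by all refinements: since each cycle of $\beta$ lies inside a cycle of $\alpha$, every orbit of the group generated by $\sigma$ and $\beta$ is contained in an orbit of the group generated by $\sigma$ and $\alpha$, so $i$ and $j$ still belong to different connected components of $(\sigma,\beta)$, and in particular to different cycles of $\sigma$.

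The main step is the identity $\kappa((i,j)\sigma,\beta)=\kappa(\sigma,\beta)-1$, and the cleanest route I see goes through the auxiliary group $\widetilde{G}$ generated by $\sigma$, $\beta$ and the transposition $(i,j)$. On one hand, since $(i,j)$ only interchanges $i$ and $j$, the orbits of $\widetilde{G}$ are exactly the orbits of $(\sigma,\beta)$ with the (distinct) orbits of $i$ and $j$ fused into one, so $\widetilde{G}$ has $\kappa(\sigma,\beta)-1$ orbits. On the other hand, $\widetilde{G}$ is also generated by $(i,j)\sigma$, $\beta$ and $(i,j)$, because $(i,j)\cdot(i,j)\sigma=\sigma$; and in the group generated by $(i,j)\sigma$ and $\beta$ the points $i$ and $j$ already lie in a common orbit --- indeed they lie in a single cycle of $(i,j)\sigma$ by the previous paragraph --- so adjoining $(i,j)$ to that group does not change its orbits. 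Hence the group generated by $(i,j)\sigma$ and $\beta$ has the same orbits as $\widetilde{G}$, namely $\kappa(\sigma,\beta)-1$ of them. This orbit-bookkeeping is where the hypothesis that $i$ and $j$ are in different components is really used, and I expect it to be the only part of the argument that is not entirely formal; everything else is substitution into the formula.
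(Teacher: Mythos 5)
Your proposal is correct and follows essentially the same route as the paper: the paper's proof consists precisely of observing that the sum runs over the same refinements $\beta$, that $n$ and $z(\beta)$ are unchanged, and that $\kappa(\sigma,\beta)=\kappa((i,j)\sigma,\beta)+1$, $\kappa(\sigma,\alpha)=\kappa((i,j)\sigma,\alpha)+1$ and $z(\sigma)=z((i,j)\sigma)+1$, which keeps both exponents fixed term by term. You merely supply the justification of these three equalities (via the auxiliary group $\langle\sigma,\beta,(i,j)\rangle$ and the inheritance of separation by refinements) that the paper leaves implicit.
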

\begin{proof}
To compute either of the two polynomials, we must sum over the same
refinements $\beta$ of the same permutation $\alpha$ on both sides. The
numbers $n$ and $z(\beta)$ are the same on both sides, and we have
$\kappa(\sigma,\alpha)=\kappa((i,j)\sigma,\alpha)+1$,
$\kappa(\sigma,\beta)=\kappa((i,j)\sigma,\beta)+1$, and
$z(\sigma)=z((i,j)\sigma)+1$.   
\end{proof}
Using~\eqref{eq:Hrule} and Lemma~\ref{lem:merge} we may restate
Theorem~\ref{thm:wrgrec} as follows.
\begin{theorem}
\label{thm:wrgrecc}  
Let $H=(\sigma,\alpha)$ be a collection of hypermaps on the set
$\{1,2,\ldots,n\}$ and assume that $(1,2,\ldots,m)$ is a cycle of
$\alpha$. Then the Whitney polynomial $R(H;u,v)$ is given by the sum
$$
R(H;u,v)=\sum_{k=1}^m R(\psi_k(H);u,v)\cdot w_k,
$$
where each monomial $w_k$ is given by the rule~(\ref{eq:wrule}) and each
$\psi_k(H)$ is a collection of hypermaps defined as follows:
$$
\psi_k(H) =
\begin{cases}
((1,k)\sigma,(1,k)\alpha(1,k-1))& \mbox{if $z((1,k)\sigma)\leq z(\sigma)$
    and $\kappa(\phi_k(H))=\kappa(H)$},\\
((1,2)(1,k)\sigma,(1,2)(1,k)\alpha)& \mbox{if $z((1,k)\sigma)\leq z(\sigma)$
    and $\kappa(\phi_k(H))=\kappa(H)+1$},\\
(\sigma,\alpha(1,k-1)(1,m)) & \mbox{if $z((1,k)\sigma)=z(\sigma)+1$
    and $\kappa(\phi_k(H))=\kappa(H)$,}\\ 
((1,2)\sigma,(1,2)\alpha(k-1,m)) &\mbox{if $z((1,k)\sigma)=z(\sigma)+1$
    and $\kappa(\phi_k(H))=\kappa(H)+1$.}\\
\end{cases}  
$$
Here we count modulo $m$.
\end{theorem}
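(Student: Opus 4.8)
The plan is to deduce Theorem~\ref{thm:wrgrecc} from Theorem~\ref{thm:wrgrec} by showing that, for every $k\in\{1,\ldots,m\}$, the collection $\phi_k(H)$ may be replaced by $\psi_k(H)$ in the sum without changing the Whitney polynomial, the monomials $w_k$ of~\eqref{eq:wrule} being left untouched. The first step is to record, from the proof of Theorem~\ref{thm:wrgrec}, that $\kappa(\phi_k(H))\in\{\kappa(H),\kappa(H)+1\}$: there the passage from $H$ to $\phi_k(H)$ is factored as an intermediate step preserving the number of orbits, followed by breaking the fixed point $(1)$ off the cycle in which $1$ is immediately followed by $2$, and this last move raises $\kappa$ by at most one. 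This is exactly the dichotomy $\kappa(\phi_k(H))=\kappa(H)$ versus $\kappa(\phi_k(H))=\kappa(H)+1$ along which the four cases in the definition of $\psi_k$ are paired with the two sign conditions on $z((1,k)\sigma)$ already present in~\eqref{eq:Hrule}.

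When $\kappa(\phi_k(H))=\kappa(H)$ I would check $\psi_k(H)=\phi_k(H)$ directly. Rewriting $\phi_k(H)$ through~\eqref{eq:Hrule}, this is literally the first line of the definition of $\psi_k$ if $z((1,k)\sigma)\le z(\sigma)$, and if $z((1,k)\sigma)=z(\sigma)+1$ it comes down to the identity of permutations $(1,k)\alpha(1,k-1)=\alpha(1,k-1)(1,m)$. All transpositions occurring in such an identity are supported inside $\{1,\ldots,m\}$, so the two sides agree with $\alpha$ off that set, and inside it a short trace through the products shows that each side carries the cycle $(1,2,\ldots,m)$ to the cycle structure prescribed for $\alpha_k$ in Definition~\ref{def:phik}; reading indices modulo $m$ and $(1,1)$ as $\id$ makes this correct for $k\in\{1,2\}$ as well (for $k=2$ the move of breaking off $(1)$ is vacuous, so this case always lands here).

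The substantive case is $\kappa(\phi_k(H))=\kappa(H)+1$, which by the factorization above can occur only for $k=1$ or $k\ge 3$. In either case it holds precisely when breaking $(1)$ off the relevant cycle of $\alpha$ disconnects $1$ from the remaining points of that cycle --- equivalently from $2$ --- and, since the preceding orbit-preserving step touches nothing else, this means $1$ and $2$ lie in different connected components of $\phi_k(H)=(\sigma_k,\alpha_k)$. Hence Lemma~\ref{lem:merge} applies with $i=1$, $j=2$ and gives $R(\phi_k(H);u,v)=R((1,2)\sigma_k,\alpha_k;u,v)$; a further round of the bookkeeping above then identifies $((1,2)\sigma_k,\alpha_k)$ with the second case of $\psi_k(H)$ when $\sigma_k=(1,k)\sigma$ and with the fourth case when $\sigma_k=\sigma$, using the identities $(1,2)(1,k)\alpha=(1,k)\alpha(1,k-1)$ and $(1,2)\alpha(k-1,m)=(1,k)\alpha(1,k-1)$ respectively. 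Summing over $k$ yields the asserted recurrence.

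I expect the crux to be the separation claim, i.e.\ extracting cleanly from the proof of Theorem~\ref{thm:wrgrec} the fact that the only change in forming $\phi_k(H)$ that can disconnect a component is the deletion of the relation sending $1$ to $2$, so that the jump in $\kappa$ is equivalent to $1$ and $2$ being separated. Everything else is a careful but routine verification of the handful of permutation identities above, each a trace through a product of the $m$-cycle $(1,2,\ldots,m)$ with one or two transpositions; the only delicate point is the bookkeeping for $k\in\{1,2\}$, where the indexing conventions apply and some of the four cases of $\psi_k$ turn out to be empty.
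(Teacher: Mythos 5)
Your proposal is correct and takes essentially the same route as the paper: both arguments reduce Theorem~\ref{thm:wrgrecc} to Theorem~\ref{thm:wrgrec} by checking that $\psi_k(H)$ coincides with $\phi_k(H)$ (via the permutation identities $(1,k)\alpha(1,k-1)=\alpha(1,k-1)(1,m)=(1,2)(1,k)\alpha=(1,2)\alpha(k-1,m)$ on the relevant lines) whenever $\kappa(\phi_k(H))=\kappa(H)$, and by invoking Lemma~\ref{lem:merge} with the points $1$ and $2$ in the jump case, which as you observe occurs exactly when breaking off the fixed point $(1)$ separates $1$ from $2$.
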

\begin{proof}
Let us compare the present formula with the one given in
Theorem~\ref{thm:wrgrec}, using the equivalent definition of $\phi_k(H)$
stated in~\eqref{eq:Hrule}. The definition of $\psi_k(H)$ is the same as
that of $\phi_k(H)$ on the first and the third lines, we only rewrote
$(1,k)\alpha(1,k-1)$ as $\alpha(1,k-1)(1,m)$ on the third line. The two
permutations are equal: 
$$
(1,k)(1,2\ldots,m)(1,k-1)=(1,2\ldots,m)(1,k-1)(1,m)$$
as both products give $(1)(2,\ldots,k-1)(k,\ldots,m)$. 
The reason of this rewriting will be explained in
Remark~\ref{rem:wrgrecc} below. Similarly, the permutation 
$(1,k)\alpha(1,k-1)$ also equals $(1,2)(1,k)\alpha$ for $k\neq 2$ and
with $(1,2)\alpha(k-1,m)$ for $k\not\in \{1,2\}$ on the respective
second and fourth lines. The only difference between $\psi_k(H)$ and
$\phi_k(H)$ on the second and fourth lines is that we replaced each
$\sigma$ with $(1,2)\sigma$ in Theorem~\ref{thm:wrgrecc}. These are the
cases when after the deletion of $(1,k-1)$ the points $1$
and $2$ belong to different connected components. Hence we may apply
Lemma~\ref{lem:merge}. 
\end{proof}  
\begin{remark}
\label{rem:wrgrecc}
Theorem~\ref{thm:wrgrecc} may be restated as follows. Let
$H=(\sigma,\alpha)$ by a collection of hypermaps, and  
$(1,2,\ldots,m)$ a cycle of $\alpha$. The polynomial
$R(H;u,v)$ may be recursively computed as the contribution
of $m$ collections of hypermaps $\psi_k(H)$, each corresponding to a
$k\in\{1,2,\ldots,m\}$.
\begin{enumerate}
\item If $1$ and $k$ belong to different vertices (or $k=1$) and deleting
  $(1,k-1)$ doesn't increase the number of connected components, then the
  $\psi_k(H)$ is obtained
  by contracting $(1,k)$ (or doing nothing when $k=1$) and deleting
  $(1,k-1)$, and it contributes $R(\psi_k(H);u,v)$.
\item If $1$ and $k$ belong to different vertices (or $k=1$) but deleting
  $(1,k-1)$ increases the number of connected components, then
  $\psi_k(H)$ is obtained by contracting first $(1,k)$ and then $(1,2)$,
  and it contributes $R(\psi_k(H);u,v)\cdot u$. Note
  that we can also contract $(1,2)$ and then $(2,k)$, as $(1,2)(1,k)$
  and $(2,k)(1,2)$ both equal $(1,k,2)$.
\item If $1$ and $k\geq 2$ belong to the same vertex but deleting
  $(1,k-1)$ doesn't increase the number of connected components, then
    $\psi_k(H)$ is obtained by first deleting
  $(1,k-1)$ and then $(1,m)$, and it contributes $R(\psi_k(H);u,v)\cdot
  v$. Note that we could also first delete $(1,m)$ and then $(k-1,m)$ as
  $(1,k-1)(1,m)$ and $(1,m)(k-1,m)$ both equal $(1,m,k-1)$.   
\item If $1$ and $k\geq 3$ belong to the same vertex and deleting
  $(1,k-1)$ increases the number of connected components, then the
  $\psi_k(H)$ is  obtained by contracting $(1,2)$ and deleting
  $(k-1,m)$, and it contributes $R(\psi_k(H);u,v)\cdot uv$. 
\end{enumerate}  
It is easy to verify that in the case when $H=(\sigma,\alpha)$ is a single
hypermap the same holds for $\psi_k(H)$ in each case above. Indeed, the
number of the connected components can only increase when we delete
$(1,k-1)$. Deleting $(1,k-1)$ replaces the 
cycle $(1,2,\ldots,m)$ of $\alpha$ with $(2,\ldots,k-1)(1,k,\ldots,m)$.
These are precisely the cases when we replace $\sigma$ (respectively
$(1,k)\sigma$) with $(1,2)\sigma$ (respectively
$(1,2)(1,k)\sigma$) using Lemma~\ref{lem:merge}. Hence the points $1$
and $2$ remain connected through the vertices. 
\end{remark}  
\begin{example}
Consider the hypermap $((1,4)(2,5)(3),(1,2,3)(4,5))$. Applying
Theorem~\ref{thm:wrgrec} to the cycle $(1,2,3)$ we obtain 
\begin{align*}
R((1,4)(2,5)(3),(1,2,3)(4,5);u,v)&=
R((1,4)(2,5)(3),(1)(2,3)(4,5);u,v)\\&+
R((1,4,2,5)(3),(1)(2,3)(4,5);u,v)\\
&+R((1,4,3)(2,5),(1)(2)(3)(4,5);u,v).
\end{align*}
On the right hand side we obtain Whitney polynomials of
maps, they are $(u+1)^2$, $(u+1)(v+1)$ and $u+1$ respectively.
Hence
$$
R((1,4)(2,5)(3),(1,2,3)(4,5);u,v)=u^2+uv+4u+v+3.
$$
\end{example}  
\begin{example}
A simplest example of a calculation where $w_k=uv$ occurs in 
Theorems~\ref{thm:wrgrec} and~\ref{thm:wrgrecc} is when
we compute the Whitney polynomial of the
hypermap $((1,3)(2), (1,2,3))$. For $k=3$ the points $1$ and $3$ belong
to the same vertex $(1,3)$ whereas deleting $(1,2)$ yields the
collection of hypermaps $((1,3)(2),(1,3)(2))$ which has two connected
components. Hence  $((1,3)(2),(1,3)(2))$ contributes
$$R((1,3)(2),(1,3)(1,2,3)(1,2); u,v)\cdot uv
=R((1,3)(2),(1)(2)(3); u,v)\cdot uv=uv$$
to $R((1,3)(2), (1,2,3);u,v)$.
\end{example}  

It is well known that certain substitutions into the Tutte polynomial
yield famous graph theoretic invariants, such as the number of spanning
forests, of the spanning subgraphs, and so on. Some of these results carry over
easily to the Whitney polynomial of a collection of hypermaps. 

The exponent of $v$ in Definition~\ref{def:wrg} may be rewritten as
\begin{equation}
\label{eq:expofv}  
\kappa(\sigma,\beta)+n-z(\beta)-z(\sigma)=2g(\sigma,\beta)+z(\beta^{-1}\sigma)-\kappa(\sigma,\beta)
\end{equation}  
where $g(\sigma,\beta)$ is the sum of the genuses of the hypermaps
constituting $(\sigma,\beta)$. Since each orbit of the permutation group
generated by $\sigma$ and $\beta$ contains at least one cycle of
$\beta^{-1}\sigma$, this exponent is $0$ if and only if $(\sigma,\beta)$
is a {\em hyperforest}: a collection of genus zero unicellular
hypermaps. 

The exponent $\kappa(\sigma,\beta)-\kappa(\sigma,\alpha)$ of $u$
in Definition~\ref{def:wrg} is zero if and only if the subgroup
generated by $\sigma$ and $\beta$ has the same orbits as the permutation
group generated by $\sigma$ and $\alpha$. We call such a collection of
hypermaps $(\sigma,\beta)$ a {\em spanning collection of hypermaps}. 
\begin{proposition}
\label{prop:sphyp}  
The following specializations hold for the Whitney polynomial of any
collection of hypermaps $(\sigma,\alpha)$:
\begin{enumerate}
\item $R(\sigma,\alpha;0,0)$ is the number of spanning hyperforests of
  $(\sigma,\alpha)$. 
\item $R(\sigma,\alpha;0,1)$ is the number of spanning collections of
  hypermaps of $(\sigma,\alpha)$.  
\end{enumerate}
\end{proposition}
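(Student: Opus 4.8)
The plan is to read both specializations directly off Definition~\ref{def:wrg}, exploiting the fact that the two exponents appearing in the sum are both nonnegative integers, so that a numerical substitution simply extracts the refinements for which the exponents vanish.

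First I would treat part (1). Substituting $u=0$ and $v=0$ into
$$
R(\sigma,\alpha;u,v)=\sum_{\beta\leq\alpha}
u^{\kappa(\sigma,\beta)-\kappa(\sigma,\alpha)}\cdot
v^{\kappa(\sigma,\beta)+n-z(\beta)-z(\sigma)}
$$
kills every term except those refinements $\beta\leq\alpha$ for which \emph{both} exponents are zero, each of which then contributes $1$. So I must argue that both exponents are genuinely $\geq 0$ (never negative) and identify when they vanish. For the $u$-exponent, $\kappa(\sigma,\beta)\geq\kappa(\sigma,\alpha)$ because refining $\alpha$ can only split cycles, and each orbit of $\langle\sigma,\alpha\rangle$ is a union of orbits of $\langle\sigma,\beta\rangle$; equality holds exactly when $(\sigma,\beta)$ is a spanning collection of hypermaps, as already observed in the paragraph preceding the proposition. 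For the $v$-exponent I would invoke the rewriting~\eqref{eq:expofv}, namely
$$
\kappa(\sigma,\beta)+n-z(\beta)-z(\sigma)=2g(\sigma,\beta)+z(\beta^{-1}\sigma)-\kappa(\sigma,\beta),
$$
together with the remark in that same paragraph that $g(\sigma,\beta)\geq 0$ and that each orbit of $\langle\sigma,\beta\rangle$ contains at least one cycle of $\beta^{-1}\sigma$, so $z(\beta^{-1}\sigma)\geq\kappa(\sigma,\beta)$; hence the $v$-exponent is $\geq 0$, and it is $0$ precisely when $g(\sigma,\beta)=0$ and each connected component of $(\sigma,\beta)$ has exactly one face, i.e. $(\sigma,\beta)$ is a hyperforest. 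Combining: the terms surviving $u=v=0$ are exactly the refinements $\beta$ with $(\sigma,\beta)$ a spanning hyperforest, each contributing $1$, proving (1).

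For part (2), I would set $u=0$ but keep the $v$-variable and substitute $v=1$. Setting $u=0$ already restricts the sum to the spanning collections of hypermaps (the refinements $\beta$ with $\kappa(\sigma,\beta)=\kappa(\sigma,\alpha)$); for each such $\beta$ the remaining factor is $v^{\kappa(\sigma,\beta)+n-z(\beta)-z(\sigma)}$, which becomes $1$ upon setting $v=1$. Hence $R(\sigma,\alpha;0,1)$ counts the spanning collections of hypermaps of $(\sigma,\alpha)$, each with weight $1$, which is exactly assertion (2).

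The only point requiring care — and thus the main (very mild) obstacle — is making the nonnegativity of the two exponents airtight, since the formula is an honest polynomial identity only if no negative powers appear; but both inequalities are exactly the ones the excerpt has already set up in the two paragraphs introducing the notions of hyperforest and spanning collection of hypermaps, so I would simply cite equation~\eqref{eq:expofv} and those observations rather than reprove them. Everything else is the routine observation that evaluating a polynomial with nonnegative-integer exponents at $0$ (resp. $0,1$) selects (resp. sums) the appropriate subfamily of terms.
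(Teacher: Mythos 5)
Your proposal is correct and follows exactly the route the paper intends: the paper states that ``the straightforward verification is left to the reader,'' relying on the same two observations you cite (the rewriting \eqref{eq:expofv} showing the $v$-exponent is nonnegative and vanishes precisely for hyperforests, and the characterization of the vanishing $u$-exponent via spanning collections). Your write-up simply supplies the details the paper omits, with no difference in approach.
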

The straightforward verification is left to the reader. Note that in the
special case when $\kappa(\sigma,\alpha)=1$, that is, in the case when 
$(\sigma,\alpha)$ is a hypermap, Proposition~\ref{prop:sphyp} implies
that $R(\sigma,\alpha;0,0)$ is the number of spanning {\em hypertrees}
as defined in~\cite{Cori-Hetyei}.   

It is also well
known that the Tutte polynomial $T(G;x,y)$ of a graph is 
closely related to its Whitney polynomial $R(G;u,v)$ by
the formula $T(G;x,y)=R(G;x-1,y-1)$. The class of all hypermaps contains
all maps, hence the result~\cite[(9.1) Theorem]{Welsh} on the
\#P-hardness of most evaluations of the Tutte polynomial implies the following.
\begin{corollary}
\label{cor:sharpP}  
The evaluation of $R(\sigma,\alpha;u,v)$ at $u=a$ and $v=b$ is \#P-hard
if $(a,b)$ does not belong to the special hyperbola $uv=1$, nor to the set
$$\{(0,0),(-2,-2), (-1,-2),(-2,-1), (i-1,-i-1), (\rho-1,\rho^2-1),
(\rho^2-1,\rho-1)\}$$
where $\rho$ is a primitive complex third root of unity. 
\end{corollary}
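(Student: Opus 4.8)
The plan is to reduce the statement directly to the classification of \#P-hard points of the Tutte polynomial of a graph, using the fact that the class of hypermaps subsumes the class of maps, and that for a map the Whitney polynomial coincides with the Whitney rank generating function of the underlying graph. First I would recall, as noted in the paragraph preceding the corollary, that $T(G;x,y)=R(G;x-1,y-1)$ for the Whitney polynomial $R(G;u,v)$ of a graph $G$; equivalently, evaluating $R$ at $(u,v)=(a,b)$ is the same as evaluating $T$ at $(x,y)=(a+1,b+1)$. Any graph $G$ can be realized as the underlying graph of a map $(\sigma,\alpha)$ (for instance by picking any rotation system, i.e.\ any cyclic ordering of the half-edges at each vertex, which determines $\sigma$, with $\alpha$ the fixed-point-free involution pairing the two half-edges of each edge), and for such a map Definition~\ref{def:wrg} gives $R(\sigma,\alpha;u,v)=R(G;u,v)$. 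Hence an algorithm that evaluates $R(\sigma,\alpha;a,b)$ for all collections of hypermaps in particular evaluates $T(G;a+1,b+1)$ for all graphs $G$.

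Next I would invoke the Vertigan--Welsh dichotomy as stated in~\cite[(9.1) Theorem]{Welsh}: the evaluation of $T(G;x,y)$ over all graphs $G$ is \#P-hard except when $(x,y)$ lies on the hyperbola $(x-1)(y-1)=1$ or is one of the nine special points
$(1,1)$, $(-1,-1)$, $(0,-1)$, $(-1,0)$, $(i,-i)$, $(-i,i)$, $(j,j^2)$, $(j^2,j)$, where $j=e^{2\pi i/3}$ (in the excerpt's notation $\rho$ is a primitive third root of unity, and $i$ is the imaginary unit). Translating each of these exceptional cases through $x=u+1$, $y=v+1$: the hyperbola $(x-1)(y-1)=1$ becomes $uv=1$; the point $(1,1)$ becomes $(0,0)$; $(-1,-1)$ becomes $(-2,-2)$; $(0,-1)$ becomes $(-1,-2)$; $(-1,0)$ becomes $(-2,-1)$; the pair $(\pm i,\mp i)$ becomes $(i-1,-i-1)$ and its conjugate $(-i-1,i-1)$, the latter of which is subsumed by writing the $\pm$ uniformly; and $(j,j^2)$, $(j^2,j)$ become $(\rho-1,\rho^2-1)$, $(\rho^2-1,\rho-1)$. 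This is exactly the exceptional set listed in the corollary, so every $(a,b)$ outside it already forces \#P-hardness just by restricting to maps; since restricting the input class can only make an evaluation problem easier, hardness for all collections of hypermaps follows a fortiori.

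The only genuinely delicate point is to make sure the transcription of the exceptional list is correct and complete, and in particular that the corollary is asserting hardness of the hypermap evaluation rather than of its restriction to maps, so that the "subsumes maps" direction is the one we need — and indeed it is, because a hardness result for a subclass of inputs immediately upgrades to the full class. One small subtlety worth a sentence in the write-up: the pair of complex-conjugate points $(i,-i)$ and $(-i,i)$ in Welsh's list maps to $(i-1,-i-1)$ and $(-i-1,i-1)$; the corollary lists only $(i-1,-i-1)$, which is legitimate because the Whitney polynomial $R(\sigma,\alpha;u,v)$ has rational (indeed nonnegative integer) coefficients, so it is hard to evaluate at a complex point precisely when it is hard at the complex conjugate point. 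Apart from this bookkeeping, there is no obstacle: the corollary is a direct corollary of Welsh's theorem via the affine change of variables $(u,v)\mapsto(u+1,v+1)$ and the observation that maps with prescribed underlying graph always exist.
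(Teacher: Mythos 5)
Your proposal is correct and follows exactly the paper's (very brief) argument: the corollary is deduced from the Vertigan--Welsh dichotomy \cite[(9.1) Theorem]{Welsh} via the substitution $T(G;x,y)=R(G;x-1,y-1)$ and the observation that every graph underlies some map, so that hardness for maps transfers to all collections of hypermaps. Your extra care in transcribing the exceptional points (and in reading $(i-1,-i-1)$ as covering both square roots of $-1$) only makes explicit what the paper leaves implicit.
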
  
For the Tutte polynomial of graphs it is also known that the exceptional
substitutions listed in Corollary~\ref{cor:sharpP} may be computed in
polynomial time. Let us briefly review two of these special substitutions in
our setting. As noted above, the substitution $u=v=0$ gives the number
of spanning hyperforests. 
On the special hyperbola $uv=1$ we have
$$
R(\sigma,\alpha;v^{-1},v)=\sum_{\beta\leq\alpha}
v^{-\kappa(\sigma,\beta)+\kappa(\sigma,\alpha)}\cdot
v^{\kappa(\sigma,\beta)+n-z(\beta)-z(\sigma)}
=\sum_{\beta\leq\alpha}
v^{n+\kappa(\sigma,\alpha)-z(\beta)-z(\sigma)}
$$
which amounts to a weighted counting of the refinements of $\alpha$ by
the number of cycles. It is not likely that the evaluation in all
remaining special cases would be equally easy for hypermaps for two
reasons. First, the situation is more complex for matroids in general
(for details see~\cite{Jaeger}), similar complications may be reasonably
expected for hypermaps. Second, defining the Tutte polynomial
$T(\sigma,\alpha;x,y)$ as $R(\sigma,\alpha,x-1,y-1)$
for collections of hypermaps does not seem to be a good idea 
because of the following example.
\begin{example}
\label{ex:Narayana}  
Consider the hypermap $(\sigma,\alpha)$ given by $\sigma=(1)(2)\cdots(n)$
and $\alpha=(1,2,\ldots,n)$. For any $\beta$ refining $\alpha$ we
have $\kappa(\sigma,\beta)=z(\beta)$ and the exponent of $v$ in
$R(\sigma,\alpha;u,v)$ is 
$$\kappa(\sigma,\beta)+n-z(\beta)-z(\sigma)=z(\beta)+n-z(\beta)-n=0
$$
The exponent of $u$ is $z(\beta)-1$ and we obtain the {\em Narayana
  polynomial} of $u$, that is, the generating function of the
noncrossing partitions of $\{1,2,\ldots,n\}$, weighted by the number of
parts. We obtain the same polynomial, but of $v$, if we compute the
Whitney polynomial of the dual hypermap
$(\alpha^{-1}\sigma,\alpha^{-1})$. For $n=2$ these Narayana
polynomials are $1+u$ and $1+v$ respectively, which meshes well with the
recurrences for the Whitney polynomials and Tutte
polynomials of maps. It is doubtful that a similar substitution rule
would work well for longer cycles of $\alpha$. For example, for $n=3$ we get
$R(\sigma,\alpha;u,v)=u^2+3u+1$, substituting $u=x-1$ yields $x^2+x-1$,
a polynomial with a negative coefficient.  
\end{example}  
On the other hand, Example~\ref{ex:Narayana} inspires the following
result, generalizing the well-known formula for Tutte polynomials of
planar maps.  
\begin{theorem}
\label{thm:planardual}   
Exchanging the two variables in the Whitney polynomial of collection of
hypermaps $(\sigma,\alpha)$ of genus zero yields the Whitney polynomial
of its dual $(\alpha^{-1}\sigma,\alpha^{-1})$:
$$
R(\sigma,\alpha;u,v)=R(\alpha^{-1}\sigma,\alpha^{-1};v,u).
$$
\end{theorem}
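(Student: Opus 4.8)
The plan is to establish a contribution-by-contribution bijection between the refinements appearing in $R(\sigma,\alpha;u,v)$ and those appearing in $R(\alpha^{-1}\sigma,\alpha^{-1};v,u)$, using the duality of the underlying combinatorics of noncrossing partitions. First I would fix the notation: write $\sigma^* = \alpha^{-1}\sigma$ and $\alpha^* = \alpha^{-1}$, so the dual collection is $(\sigma^*,\alpha^*)$; since $(\sigma,\alpha)$ has genus zero, each connected component is a genus-zero hypermap, and by the cited result of Cori each cycle of $\alpha$ lists a noncrossing partition block with respect to the circular order given by the corresponding cycle of $\sigma$. The key object will be a duality map $\beta \mapsto \beta^*$ that, on each component, sends a refinement $\beta$ of $\alpha$ to a refinement $\beta^*$ of $\alpha^*=\alpha^{-1}$ — concretely, within each face (cycle of $\alpha^{-1}\sigma$) the noncrossing partition induced by $\beta$ is replaced by its Kreweras complement, or equivalently $\beta^*$ is built so that $(\sigma,\beta)$ and $(\sigma^*,\beta^*)$ are mutually dual genus-zero collections. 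The cleanest way to phrase this is: $\beta \le \alpha$ in the refinement order if and only if $\beta^{-1}\sigma \ge \alpha^{-1}\sigma$, which follows from the lattice structure of the noncrossing partition lattice and the fact that genus-zero refinement corresponds to the partition order.

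Second, I would verify that this correspondence $\beta \leftrightarrow \beta^*$ is a bijection between $\{\beta : \beta \le \alpha\}$ and $\{\gamma : \gamma \le \alpha^{-1}\}$. Here the natural candidate is $\beta^* = \beta^{-1}\sigma$ composed appropriately, or more precisely one takes $\beta^*$ to be determined by the requirement that it refines $\alpha^{-1}$ and has $(\beta^*)^{-1}\sigma^* = (\beta^*)^{-1}\alpha^{-1}\sigma$ match up with $\beta$; I expect the right statement to be $\beta^* = \sigma^{-1}\beta\sigma \cdot(\text{something})$, or simply that the map is the Kreweras-type complement fixing $\sigma$. Involutivity (applying the construction twice returns $\beta$) should hold because the dual of the dual of a genus-zero hypermap is itself, a fact that follows directly from Equation~\eqref{eq:genusdef}: $z((\alpha^{-1})^{-1}\alpha^{-1}\sigma) = z(\sigma)$ and the genus formula is symmetric in the three permutations $\sigma$, $\alpha$, $\alpha^{-1}\sigma$.

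Third, I would check that the exponents swap. For a refinement $\beta \le \alpha$, its contribution to $R(\sigma,\alpha;u,v)$ is $u^{\kappa(\sigma,\beta)-\kappa(\sigma,\alpha)} v^{\kappa(\sigma,\beta)+n-z(\beta)-z(\sigma)}$, which by Equation~\eqref{eq:expofv} equals $u^{\kappa(\sigma,\beta)-\kappa(\sigma,\alpha)} v^{2g(\sigma,\beta)+z(\beta^{-1}\sigma)-\kappa(\sigma,\beta)}$; since $(\sigma,\beta)$ is genus zero, $g(\sigma,\beta)=0$ and this is $u^{\kappa(\sigma,\beta)-\kappa(\sigma,\alpha)} v^{z(\beta^{-1}\sigma)-\kappa(\sigma,\beta)}$. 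For the dual, the contribution of $\beta^*$ to $R(\sigma^*,\alpha^*;v,u)$ is $v^{\kappa(\sigma^*,\beta^*)-\kappa(\sigma^*,\alpha^*)} u^{\kappa(\sigma^*,\beta^*)+n-z(\beta^*)-z(\sigma^*)}$, which similarly simplifies to $v^{\kappa(\sigma^*,\beta^*)-\kappa(\sigma^*,\alpha^*)} u^{z((\beta^*)^{-1}\sigma^*)-\kappa(\sigma^*,\beta^*)}$. So the claim reduces to the four identities $\kappa(\sigma,\beta) = \kappa(\sigma^*,\beta^*)$ (the connected components are preserved under duality — both pairs generate the same permutation group up to conjugacy, or at least have the same orbit structure), $\kappa(\sigma,\alpha)=\kappa(\sigma^*,\alpha^*)$ (same), $z(\beta^{-1}\sigma) = \kappa(\sigma^*,\beta^*) + (\text{correction})$, and the matching correction on the other side. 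The precise bookkeeping is: $z(\beta^{-1}\sigma)-\kappa(\sigma,\beta)$ should equal $\kappa(\sigma^*,\beta^*)-\kappa(\sigma^*,\alpha^*) + (\text{terms from }\alpha)$; working component by component and using that for a single genus-zero hypermap $z(\beta^{-1}\sigma)$ counts the faces of $(\sigma,\beta)$ while $z(\beta)$ counts the vertices of the dual, the Euler relation $z(\sigma)+z(\beta)+z(\beta^{-1}\sigma) = n+2\kappa(\sigma,\beta)$ makes both sides symmetric.

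**The main obstacle** will be setting up the duality map $\beta \mapsto \beta^*$ correctly and proving it is a well-defined bijection on refinements, rather than the exponent arithmetic, which is forced by the Euler relation once the bijection is in hand. The subtlety is that "refinement" is defined face-by-face via genus-zero conditions, and one must check that the Kreweras-complement-style operation respects exactly the refinement relation — i.e. that $\beta\le\alpha$ transforms into $\beta^*\le\alpha^{-1}$ with no gain or loss — and that it interacts correctly with components when the collection is disconnected. I would handle the disconnected case by Proposition~\ref{prop:product}: it suffices to prove the theorem for a single genus-zero hypermap, since the dual of a disjoint union is the disjoint union of the duals and $R$ is multiplicative. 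For the connected case, the cleanest route is probably to invoke the known bijective correspondence between genus-zero hypermaps and noncrossing partitions (from~\cite{Cori}) together with the classical Kreweras complementation on $NC(n)$, under which refinement order is reversed, and then translate back.
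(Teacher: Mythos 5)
Your overall strategy --- a term-by-term bijection between refinements of $\alpha$ and refinements of $\alpha^{-1}$, with the exponent swap forced by the genus-zero Euler relation --- is exactly the paper's strategy, and you are right that the bijection is the crux. But as written the proposal has two genuine gaps. First, you never actually pin down the bijection: you offer $\beta^{-1}\sigma$, ``$\sigma^{-1}\beta\sigma\cdot(\text{something})$'', and a Kreweras complement taken ``within each face (cycle of $\alpha^{-1}\sigma$)''. None of these is correct as stated. The refinement relation $\beta\le\alpha$ is defined hyperedge-by-hyperedge (cycle of $\alpha$ by cycle of $\alpha$), not face-by-face, and the map the paper uses is simply $\beta\mapsto\alpha^{-1}\beta$: on each cycle $\alpha_i$ this is the Kreweras-type complement $\beta_i\mapsto\alpha_i^{-1}\beta_i$, and the equivalence of $g(\alpha_i,\beta_i)=0$ with $g(\alpha_i^{-1},\alpha_i^{-1}\beta_i)=0$ makes it a bijection onto the refinements of $\alpha^{-1}$ (with the dual construction applied to $\alpha^{-1}\beta$ returning $\beta$). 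Your auxiliary claim that $\beta\le\alpha$ holds if and only if $\beta^{-1}\sigma\ge\alpha^{-1}\sigma$ is neither needed nor justified.

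Second, and more seriously, one of your ``four identities'' is false: $\kappa(\sigma,\beta)=\kappa(\sigma^*,\beta^*)$ does not hold. Take $\sigma=\beta=\id=(1)(2)\cdots(n)$ and $\alpha=(1,2,\ldots,n)$: then $\kappa(\sigma,\beta)=n$, while $\sigma^*=\beta^*=\alpha^{-1}$ gives $\kappa(\sigma^*,\beta^*)=1$. Connectivity is \emph{not} preserved refinement-by-refinement under duality; only the top elements $(\sigma,\alpha)$ and $(\alpha^{-1}\sigma,\alpha^{-1})$ are guaranteed to have matching component counts. The correct bookkeeping, which the paper carries out, is to solve the genus-zero relation for $\kappa(\sigma,\beta)$ and $\kappa(\sigma,\alpha)$, use that $(\alpha,\beta)$ also has genus zero (so that $z(\alpha)-z(\beta)=z(\alpha^{-1}\beta)-n$), and verify directly that the exponent of $u$ at $\beta$, namely $\tfrac{1}{2}\left(n-z(\alpha^{-1}\beta)-z(\alpha^{-1}\sigma)+z(\beta^{-1}\sigma)\right)$, coincides with the exponent of $v$ at $\alpha^{-1}\beta$ on the dual side, using $\beta^{-1}\sigma=(\alpha^{-1}\beta)^{-1}(\alpha^{-1}\sigma)$. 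Your unresolved ``correction terms'' are exactly where this computation must happen, so the proof is not complete as proposed.
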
  
\begin{proof}
Let $(\sigma,\alpha)$ be a collection of hypermaps of genus zero on
$\{1,2,\ldots,n\}$, and let $\beta$ be any permutation
$\{1,2,\ldots,n\}$ refining $\alpha$. It suffices to show that the
contribution $w(\beta)$ of $\beta$ to $R(\sigma,\alpha;u,v)$ is the same
as the contribution $w^*(\alpha^{-1}\beta)$ of $\alpha^{-1}\beta$ to
$R(\alpha^{-1}\sigma,\alpha^{-1};v,u)$. Indeed, the map $\beta\mapsto
\alpha^{-1}\beta$ is a bijection from the set of permutations refining
$\alpha$ to the set of permutations refining $\alpha^{-1}$: if $\beta_i$
is the permutation replacing the cycle $\alpha_i$ of $\alpha$ then
$g(\alpha_i,\beta_i)=0$ is easily seen to be equivalent to
$g(\alpha_i^{-1}, \alpha_i^{-1}\beta_i)=0$.

The collection of hypermaps
$(\sigma,\beta)$ has genus zero, hence we have:
$$
n+2\kappa(\sigma,\beta)-z(\beta)-z(\sigma)-z(\beta^{-1}\sigma)=0.
$$
Solving this equation for $\kappa(\sigma,\beta)$ we obtain:
\begin{equation}
\label{eq:kbeta}  
\kappa(\sigma,\beta)=\frac{z(\beta)+z(\sigma)+z(\beta^{-1}\sigma)-n}{2}.
\end{equation}  
Similarly $\kappa(\sigma,\alpha)$ may be written as 
\begin{equation}
\label{eq:kalpha}  
\kappa(\sigma,\alpha)=\frac{z(\alpha)+z(\sigma)+z(\alpha^{-1}\sigma)-n}{2}.
\end{equation}  
Taking the difference of~(\ref{eq:kbeta}) and~(\ref{eq:kalpha}) we
obtain that the exponent of $u$ in $w(b)$ is
    \begin{equation}
      \label{eq:wbeta1}
\kappa(\sigma,\beta)-\kappa(\sigma,\alpha)
      =\frac{z(\beta)-z(\alpha)+z(\beta^{-1}\sigma)-z(\alpha^{-1}\sigma)}{2}. 
     \end{equation} 
Since $\beta$ is a refinement of $\alpha$, the collection of hypermaps
$(\alpha,\beta)$ has genus zero and $z(\alpha)$ components. By the
definition of the genus we have 
$$
n+2z(\alpha)-z(\alpha)-z(\beta)-z(\alpha^{-1}\beta)=0.
$$
Using this equation we may rewrite $z(\beta)-z(\alpha)$ as
$$
z(\alpha)-z(\beta)=z(\alpha^{-1}\beta)-n.
$$
Substituting this into~(\ref{eq:wbeta1}) we obtain that the exponent of
$u$ in $w(b)$ is 
    \begin{equation}
      \label{eq:uwbeta}
\kappa(\sigma,\beta)-\kappa(\sigma,\alpha)
 =\frac{n-z(\alpha^{-1}\beta)-z(\alpha^{-1}\sigma)+z(\beta^{-1}\sigma)}{2}.  
     \end{equation} 
On the other hand, by~(\ref{eq:kbeta}), the exponent of $v$ in
$w(\beta)$ is
    \begin{equation}
      \label{eq:vwbeta}
      \kappa(\sigma,\beta)+n-z(\beta)-z(\sigma)=
      \frac{n-z(\beta)-z(\sigma)+z(\beta^{-1}\sigma)}{2}.
      \end{equation}
Note that the right hand side of~(\ref{eq:uwbeta}) may be obtained from
the right hand side of~(\ref{eq:vwbeta}) by replacing $\beta$ with
$\alpha^{-1}\beta$ and $\sigma$ with $\alpha^{-1}\sigma$. (The only not
completely trivial part of the verification is to observe that
$\beta^{-1}\sigma=(\alpha^{-1}\beta)^{-1}(\alpha^{-1}\sigma)$.) Hence
the exponent of $v$ in $w(\beta)$ is the same as the exponent of $v$ in
$w^{*}(\alpha^{-1}\beta)$. The analogous statement for the exponents of
$u$ follows immediately after observing that the operation $\beta\mapsto
\alpha^{-1}\beta$ is an involution. 
\end{proof}

\section{The medial map of a hypermap}
\label{sec:medial}

\begin{definition}
Let $(\sigma,\alpha)$ be a hypermap on the set of points
$\{1,2,\ldots,n\}$. We define its {\em medial map $M(\sigma,\alpha)$} as
the following map $(\sigma',\alpha')$:
\begin{itemize}
\item[-] The set of points of $(\sigma',\alpha')$ is
  $\{1^-,1^+,2^-,2^+,\ldots,n^-,n^+\}$;
\item[-] the cycles of $\sigma'$ are all cycles of the form
  $(i_1^-,i_1^+,i_2^-,i_2^+,\ldots,i_k^-,i_k^+)$ where\\
  $(i_1,i_2,\ldots,i_k)$ is a cycle of $\alpha$;
\item[-] the cycles of $\alpha'$ are all cycles of the form
  $(i^+,\sigma(i)^-)$.  
\end{itemize}
We extend the definition to collections of hypermaps using the same rules.
\end{definition}  
The medial map of a hypermap is always an {\em Eulerian map}, defined as
follows. 
\begin{definition}
An {\em Eulerian map} is a map $(\sigma,\alpha)$ on the set of points\\
$\{1^-,1^+,2^-,2^+,\ldots,n^-,n^+\}$ that has the following property: each
cycle of $\sigma$ is of the form
$(i_1^-,i_1^+,i_2^-,i_2^+,\ldots,i_k^-,i_k^+)$ and each edge of $\alpha$
connects a pair of points of opposite signs. We extend 
the definition to collections of maps the same way. We define the {\em
  underlying Eulerian digraph} of a collection of Eulerian maps is obtained by
directing each edge $(i^+,j^-)\in \alpha$ from its positive endpoints
toward its negative endpoint. 
\end{definition}  

\begin{figure}[h]
\begin{center}
\input{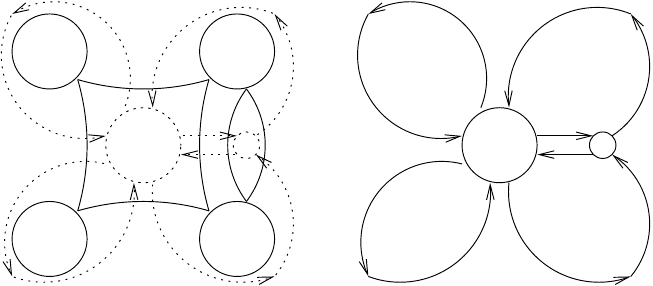_t}
\end{center}
\caption{A hypermap and its medial map}
\label{fig:medial} 
\end{figure}

The process of creating the medial map is shown in
Figure~\ref{fig:medial}. In our example we construct the medial map of
the planar hypermap $((1,5)(2,6)(3)(4),(1,2,3,4)(5,6))$. The hyperedges
$(1,2,3,4)$ and $(5,6)$ give rise to the vertices
$(1^-,1^+,2^-,2^+,3^-,3^+,4^-,4^+)$ and $(5^-,5^+,6^-,6^+)$. We should
think of the point $i^-$, respectively $i^+$ immediately preceding,
respectively succeeding the point $i$ in the cyclic order. Next we use
the vertices of the hypermap to define the edges: the cycle $(1,5)$
gives rise to the edges $(1^+,5^-)$ and $(5^+,1^-)$. Note that, for any
planar hypermap we may first replace the cycle
$(j_1,j_2,\ldots,j_{\ell})$ of $\sigma$ with the cycle
$$j_1^-\mapsto j_1^+ \mapsto j_2^-\mapsto j_2^+\mapsto
,\ldots,j_{\ell}^-\mapsto j_{\ell}^+ \mapsto j_1^- ,$$
then we erase the arrows $j_s^{-}\mapsto j_s^+$, and the
remaining arrows form the edges of the medial hypermap. By definition,
the edges of a map are {\em not} oriented, but we use the signature of
the points to define an orientation: we set all arrows to have their tail
at their positive end and their head at their negative end. We call the
resulting underlying digraph the {\em directed medial graph of the
  hypermap}. This is always an Eulerian digraph as the indegree of each
vertex equals its outdegree. The converse is also true:
\begin{proposition}
\label{prop:dmg}  
Every directed Eulerian graph arises as the
directed medial graph of a collection of hypermaps.
\end{proposition}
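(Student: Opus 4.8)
The plan is to reconstruct a pair of permutations directly from the combinatorial data of the digraph. Let $\overrightarrow{G}$ be a directed Eulerian graph with vertex set $V$ and edge set $E$, and let $t,h\colon E\to V$ be the maps assigning to an edge its tail and its head. We may assume that $\overrightarrow{G}$ has no isolated vertices: the directed medial graph of a collection of hypermaps never has a vertex of degree zero, since each of its vertices is a cycle $(i_1^-,i_1^+,\ldots,i_k^-,i_k^+)$ of $\sigma'$ with $k\ge 1$ and hence of degree $2k\ge 2$, and isolated vertices carry no structure in any case. We will produce a collection of hypermaps $(\sigma,\alpha)$ on the point set $E$ whose directed medial graph is $\overrightarrow{G}$; note that, by definition, any pair of permutations of $E$ is a collection of hypermaps, so no transitivity needs to be checked.

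First I would define $\alpha$. The fibers $t^{-1}(v)$, for $v\in V$, partition $E$ and, by our assumption, each is nonempty; declaring each $t^{-1}(v)$ to be a cycle of $\alpha$, equipped with an arbitrarily chosen cyclic order, defines a permutation $\alpha$ of $E$. Next I would use the Eulerian condition to define $\sigma$: since $|h^{-1}(v)|=\deg^-(v)=\deg^+(v)=|t^{-1}(v)|$ for every $v$, I choose for each $v$ a bijection $\phi_v\colon h^{-1}(v)\to t^{-1}(v)$ and set $\sigma(e)=\phi_{h(e)}(e)$. Because the fibers $h^{-1}(v)$ also partition $E$, this is a well-defined permutation of $E$. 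Up to here everything is routine bookkeeping.

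The remaining step is to unwind the definition of $M(\sigma,\alpha)$ and its underlying Eulerian digraph. The vertices of the directed medial graph of $(\sigma,\alpha)$ are the cycles of $\alpha$, that is, the sets $t^{-1}(v)$, which we identify with $V$; its edges are the points of $(\sigma,\alpha)$, that is, the elements of $E$, and the directed edge associated with $e$ runs from the $\alpha$-cycle containing $e$ to the $\alpha$-cycle containing $\sigma(e)$. The former is $t^{-1}(t(e))$, identified with $t(e)$; and since $\sigma(e)=\phi_{h(e)}(e)$ lies in $t^{-1}(h(e))$, the latter is $t^{-1}(h(e))$, identified with $h(e)$. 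Thus the directed edge associated with $e$ goes from $t(e)$ to $h(e)$, so the directed medial graph of $(\sigma,\alpha)$ equals $\overrightarrow{G}$ edge for edge.

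I do not expect a serious obstacle here; the construction is essentially forced once one reads off what the medial map does — the choice of the $\phi_v$ is exactly the choice of a transition system at each vertex. The only points needing a little care are that $\sigma$ and $\alpha$ really are permutations of $E$, which is precisely the statement that the fibers of $t$ and the fibers of $h$ each partition $E$, and that loops and parallel edges are accommodated with no change (a loop at $v$ is merely a point $e$ with $t(e)=h(e)=v$, lying in the cycle $t^{-1}(v)$ of $\alpha$ and moved within it by $\sigma$ as $\phi_v$ dictates). It is also worth remarking, although it is not needed for the statement, that if $\overrightarrow{G}$ is connected then it is strongly connected, whence $\langle\sigma,\alpha\rangle$ acts transitively on $E$ and $(\sigma,\alpha)$ is in fact a single hypermap.
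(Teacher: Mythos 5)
Your proof is correct, and the underlying construction is the same as the paper's: make each vertex of $\overrightarrow{G}$ into a hyperedge and record a transition system (a pairing of in-edges with out-edges at each vertex) in the other permutation; the verification is then a direct unwinding of the definition of the medial map. Two differences are worth noting. First, your bookkeeping is lighter: taking the point set to be $E$ itself, with the fibres $t^{-1}(v)$ as the cycles of $\alpha$ and with $\sigma$ assembled from the bijections $\phi_v$, lets you bypass the paper's detour through the signed half-edges $(v,e)^{\pm}$ and their relabeling by $\{1^-,1^+,\ldots,n^-,n^+\}$, a step the paper supports only with an unproved claim of ``induction on the number of vertices''. Second, and more substantively, you assign the two permutations their roles consistently with the definition of $M(\sigma,\alpha)$: since the vertices of the medial map are indexed by the cycles of $\alpha$, and the edge attached to the point $i$ runs from the $\alpha$-cycle of $i$ to the $\alpha$-cycle of $\sigma(i)$, the vertices of $\overrightarrow{G}$ must become cycles of $\alpha$ and the transition system must define $\sigma$, exactly as you have it. Read against that definition, the paper's proof does the opposite (its vertex-cycles form $\sigma$ and its edge-pairing forms $\alpha$), so the pair it writes down realizes $\overrightarrow{G}$ only after the two permutations are interchanged; your version needs no such correction. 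Your side remarks --- that isolated vertices must be set aside since a medial map has minimum degree $2$, and that connectedness of $\overrightarrow{G}$ forces transitivity of the group generated by $\sigma$ and $\alpha$ --- are both accurate.
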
  
\begin{proof}
Consider an Eulerian digraph
$D(V,E)$. Let us define the set of points as the set of all pairs
$(v,e)^+$ where $v$ is the tail of $e$ together with the set of all
pairs $(v,e)^-$ where $v$ is the head of $e$. (Loop arrows give rise to
two points.) Let $2n$ be the number of resulting points.
At each vertex $v$ list the points $(v,e)^+$ and $(v,e)^-$
in cyclic order in such a way that positive and negative points
alternate. This is possible since the graph is a directed Eulerian
graph. We relabel the points with the elements of the set 
$\{1^-,1^+,2^-,2^+,\ldots,n^-,n^+\}$ in such a way that we don't change their
signs, we use each label exactly once, and at each vertex we obtain a
cycle of the form $(i_1^-,i_1^+,i_2^-,i_2^+,\ldots,i_k^-,i_k^+)$. It may
be shown by induction on the number of vertices that such a relabeling
is possible. We define the vertex permutation $\sigma$ as the
permutation obtained by replacing each cycle
$(i_1^-,i_1^+,i_2^-,i_2^+,\ldots,i_k^-,i_k^+)$ with the cycle
$(i_1,i_2,\ldots,i_k)$. Clearly $\sigma$ is a permutation of
$\{1,2,\ldots,n\}$. Finally we define the permutation $\alpha$ by
setting $\alpha(i)=j$ exactly when there is an edge containing $i^+$ and
$j^-$. It is easy to see that the medial digraph of $(\sigma,\alpha)$ is
isomorphic to the Eulerian digraph we started with. 
\end{proof}  
Observe that definition of the underlying medial digraph of a genus zero
map essentially coincides with the existing definition of the medial digraph
of a planar graph.
\begin{figure}[h]
\begin{center}
\input{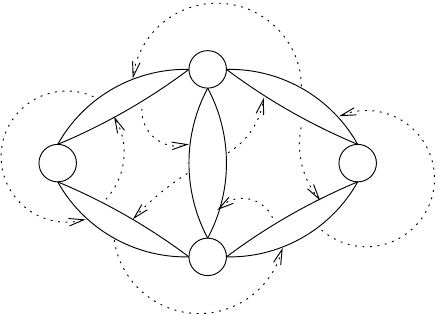_t}
\end{center}
\caption{The medial map of a planar map}
\label{fig:mmap} 
\end{figure}
An example is shown in Figure~\ref{fig:mmap}. Here we see the genus zero
map $(\sigma,\alpha)$ with $\sigma=(1,2,3)(4,5)(6,7,8)(9,10)$ and
$\alpha=(1,5)(2,7)(3,9)(4,8)(6,10)$. The edges of the medial map are
represented as dashed arcs. The convention of directing each edge of the
medial map from its positive end towards its negative end coincides with
the requirement that the shortest cycles of the medial map surrounding a
vertex of the map $(\sigma,\alpha)$ should be oriented counterclockwise.

Although most of our results are about planar hypermaps, we wish to
point out that the above definitions are valid for all hypermaps and
preserve the genus.
\begin{proposition}
Let $(\sigma,\alpha)$ be any hypermap on the set of points
$\{1,2,\ldots,n\}$. Then its medial map $M(\sigma,\alpha)$ is a map and
its genus $g(M(\sigma,\alpha))$ equals the genus $g(\sigma,\alpha)$  of
$(\sigma,\alpha)$. 
\end{proposition}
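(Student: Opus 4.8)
The plan is to evaluate Jacques' formula~\eqref{eq:genusdef} for $M(\sigma,\alpha)=(\sigma',\alpha')$, which lives on $2n$ points, and to compare the result term by term with the same formula for $(\sigma,\alpha)$. That $M(\sigma,\alpha)$ is a map is almost immediate: the cycles of $\alpha'$ are the $n$ transpositions $(i^+,\sigma(i)^-)$, and since the positive labels $i^+$ and the negative labels $\sigma(i)^-$ each exhaust their half of the $2n$ points exactly once as $i$ ranges over $\{1,\dots,n\}$, the permutation $\alpha'$ is a fixed-point-free involution, so all its cycles have length $2$. To see that $(\sigma',\alpha')$ is actually a connected hypermap, I would let $O$ be the $\langle\sigma',\alpha'\rangle$-orbit of a point; since $i^-$ and $i^+$ always lie in the same $\sigma'$-cycle, $O$ is a union of pairs $\{i^-,i^+\}$, and the index set $S=\{i:\{i^-,i^+\}\subseteq O\}$ is closed under $\alpha$ (because $\sigma'(i^+)=\alpha(i)^-$) and under $\sigma$ (because $\alpha'(i^+)=\sigma(i)^-$); transitivity of $\langle\sigma,\alpha\rangle$ then forces $S=\{1,\dots,n\}$ and $O$ to be everything.

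For the genus I would record the three cycle counts. First, $z(\sigma')=z(\alpha)$, since each cycle $(i_1,\dots,i_k)$ of $\alpha$ produces exactly one cycle $(i_1^-,i_1^+,\dots,i_k^-,i_k^+)$ of $\sigma'$; and $z(\alpha')=n$ as noted above. The heart of the argument is the computation of $z(\alpha'^{-1}\sigma')=z(\alpha'\sigma')$, using $\alpha'^{-1}=\alpha'$. From $\sigma'(i_s^-)=i_s^+$, $\sigma'(i_s^+)=\alpha(i_s)^-$, $\alpha'(i^+)=\sigma(i)^-$ and $\alpha'(j^-)=\sigma^{-1}(j)^+$ (with the paper's convention that a product acts by applying the rightmost factor first, as one checks on the running example), one gets
\[
(\alpha'\sigma')(i^-)=\sigma(i)^-\qquad\text{and}\qquad(\alpha'\sigma')(i^+)=(\sigma^{-1}\alpha)(i)^+.
\]
So $\alpha'\sigma'$ preserves signs, acting on negative labels as a copy of $\sigma$ and on positive labels as a copy of $\sigma^{-1}\alpha$; hence $z(\alpha'\sigma')=z(\sigma)+z(\sigma^{-1}\alpha)=z(\sigma)+z(\alpha^{-1}\sigma)$, the last equality holding because a permutation and its inverse share the same cycle type.

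Substituting into~\eqref{eq:genusdef} for $M(\sigma,\alpha)$ yields
\[
2n+2-2g(M(\sigma,\alpha))=z(\alpha)+n+\bigl(z(\sigma)+z(\alpha^{-1}\sigma)\bigr)=n+\bigl(z(\sigma)+z(\alpha)+z(\alpha^{-1}\sigma)\bigr),
\]
and by~\eqref{eq:genusdef} for $(\sigma,\alpha)$ the parenthesized sum is $n+2-2g(\sigma,\alpha)$, so the right-hand side equals $2n+2-2g(\sigma,\alpha)$ and therefore $g(M(\sigma,\alpha))=g(\sigma,\alpha)$.

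I expect the only real obstacle to be bookkeeping: fixing the composition convention correctly and being careful with the cyclic index $s+1$ taken modulo $k$ inside an $\alpha$-cycle when simplifying $\sigma'(i_s^+)$. Once the formula for $(\alpha'\sigma')(i^{\pm})$ above is in hand, the rest is just substitution into Jacques' formula.
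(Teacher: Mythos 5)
Your proof is correct and follows essentially the same route as the paper's: verify transitivity of $\langle\sigma',\alpha'\rangle$, record $z(\sigma')=z(\alpha)$ and $z(\alpha')=n$, show that the face permutation $\alpha'^{-1}\sigma'$ preserves signs and has $z(\sigma)+z(\alpha^{-1}\sigma)$ cycles, and substitute into Jacques' formula. Your explicit formulas $(\alpha'\sigma')(i^-)=\sigma(i)^-$ and $(\alpha'\sigma')(i^+)=(\sigma^{-1}\alpha)(i)^+$ are in fact the corrected form of the (slightly garbled) displayed identities in the paper's proof, and they produce the same total cycle count, so the genus computation goes through identically.
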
  
\begin{proof}
First we check that the medial map is connected: using the cycles of
$\sigma'$ we may reach the positive copy of every point, hence it
suffices to check that any $j^+$ may be reached from any $i^+$. Consider
any directed path $i=i_1\rightarrow i_2\rightarrow \cdots \rightarrow
i_k=j$ in which each step corresponds to $i_s\rightarrow \sigma(i_s)=i_{s+1}$ or
$i_s\rightarrow \alpha(i_s)=i_{s+1}$. If $i_{s+1}=\sigma(i_s)$ then
$$i_{s+1}^+=\sigma(i_s)^+=\sigma'(\sigma(i_s)^-)=\sigma'\alpha'(i_s^+),$$
and if $i_{s+1}=\alpha(i_s)$ then
$$i_{s+1}^+=\alpha(i_s)^+=\sigma'(\alpha(i_s)^-)=\sigma'^{2}(i_s^+).$$

The cycles of the permutation $\sigma'$ are obtained by doubling the
number of points on each cycle of $\alpha$, hence we have
$z(\sigma')=z(\alpha)$. The cycles of $\alpha'$ form a matching on $2n$
points, hence we have $z(\alpha')=n$. Finally, for $\alpha'^{-1}\sigma'$
we have
\begin{align*}
\alpha'^{-1}\sigma'(i^-)&=\alpha'^{-1}(\alpha^{-1}(i)^+)=\sigma\alpha^{-1}(i)^-\quad\mbox{and}\\
\alpha'^{-1}\sigma'(i^+)&=\alpha'^{-1}(i^+)=\sigma^{-1}(i)^-.\\
\end{align*}
Hence $\alpha'^{-1}\sigma'$ has
$z(\sigma\alpha^{-1})=z(\alpha^{-1}(\sigma\alpha^{-1})\alpha)=z(\alpha^{-1}\sigma)$
cycles on the negative points and $z(\sigma^{-1})=z(\sigma)$ cycles on
the positive points. The genus formula gives
\begin{align*}
g(M(\sigma,\alpha))&=2n+2-z(\sigma')-z(\alpha')-z(\alpha'^{-1}\sigma')\\
&=2n+2-z(\alpha)-n-(z(\alpha^{-1}\sigma)+z(\sigma))\\
&=n+2-z(\alpha)-z(\alpha^{-1}\sigma)-z(\sigma)=2g(\sigma,\alpha).
\end{align*}
\end{proof}  

Next we generalize circuit partitions in such a way that
(\ref{eq:MartinT}) still holds.

\begin{definition}
Let $(\sigma,\alpha)$ be a collection of Eulerian maps. A {\em
  noncrossing Eulerian state} is a partitioning of the edges of the
underlying directed medial graph into closed paths in such a way that
these paths do not cross at any of the vertices.   
\end{definition}  

\begin{figure}[h]
\begin{center}
\input{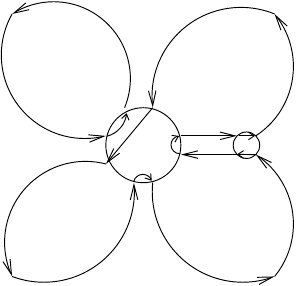_t}
\end{center}
\caption{A noncrossing Eulerian state}
\label{fig:ncccp} 
\end{figure}
Figure~\ref{fig:ncccp} represents a noncrossing Eulerian state of the
Eulerian map shown in the right hand side of Figure~\ref{fig:medial}. We
partition the set of edges into closed paths by matching each negative
point on a vertex to a positive point on the same vertex. While the
edges of the underlying Eulerian digraph are directed from the positive
points towards the negative points, the arrows inside the vertices point
from the negative points towards the positive points. We call the
resulting Eulerian state noncrossing if the arrows inside the vertices
can be drawn in a way that they do not cross, equivalently, inside each
vertex, the matching induced by the arrows must be a noncrossing
partition. Note that in the medial map of a map each vertex has four
points, two negative and two positive, the positive and the negative
points alternate, there are exactly two ways to match each positive
point to a negative point in a vertex, and both ways yield a noncrossing
matching. Thus for Eulerian maps that are medial maps of planar maps the
definition of a noncrossing Eulerian state coincides with the usual definition 
of an Eulerian state. In the general case, a noncrossing Eulerian state
is uniquely defined by a matching that refines the vertex permutation of
the Eulerian map, and matches positive points to negative points. We
call such a matching a {\em coherent matching} of the Eulerian map. 

\begin{definition}
We define the {\em noncrossing circuit partition
polynomial} of an Eulerian map $(\sigma,\alpha)$ as 
$$
  j((\sigma,\alpha);x)=\sum_{k\geq 0} f_k(\sigma,\alpha) x^k.
  $$
Here $f_k(\sigma,\alpha)$ is the number of noncrossing Eulerian states
with $k$ cycles.   
\end{definition}

\begin{remark}
\label{rem:maps-nc}
If $(\sigma,\alpha)$ is a map then the above noncrossing condition is
automatically satisfied by all Eulerian states of the medial map 
$M(\sigma,\alpha)$. Indeed, each vertex of $M(\sigma,\alpha)$
corresponds to a $2$-cycle $(i,j)$ of $\alpha$, which gives rise to a
$4$-cycle $(i^+,i^-,j^+,j^-)$. To create an Eulerian state, we must
match points of opposite sign: either we match $i^+$ with $i^-$ and
$j^+$ with $j^-$, or we match $i^+$ with $j^-$ and
$j^+$ with $i^-$. Both matchings are coherent.      
\end{remark}

\begin{theorem}
\label{thm:genmartin}
Let $(\sigma,\alpha)$ a genus zero collection of hypermaps and
$M(\sigma,\alpha)$ the collection of its medial maps. Then
$$
j(M(\sigma,\alpha);x)=x^{\kappa(\sigma,\alpha)} R(\sigma,\alpha;x,x)
$$
\end{theorem}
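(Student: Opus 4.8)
The plan is to exhibit a weight-preserving bijection between the refinements $\beta$ of $\alpha$ that index the sum in Definition~\ref{def:wrg} and the noncrossing Eulerian states of $M(\sigma,\alpha)$, and then to match the exponents of $x$ on the two sides using the genus formula~\eqref{eq:genusdef}.

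Write $M(\sigma,\alpha)=(\sigma',\alpha')$ and recall that a noncrossing Eulerian state is the same thing as a coherent matching $\mu$, i.e.\ a matching refining $\sigma'$ that pairs positive points with negative points and is noncrossing inside each vertex. To a refinement $\beta$ of $\alpha$ one associates the matching $\mu_\beta$ given by $\mu_\beta(i^+)=\beta(i)^-$; this is an involution pairing positive with negative points, and since $\beta(i)$ lies in the cycle of $\alpha$ containing $i$, each pair $\{i^+,\beta(i)^-\}$ lies in a single cycle of $\sigma'$, so $\mu_\beta$ refines $\sigma'$. Conversely every matching refining $\sigma'$ that pairs positive with negative points arises this way from a unique permutation $\beta$ refining $\alpha$ as a set partition. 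The crux is to check that $\mu_\beta$ is noncrossing inside each vertex if and only if $g(\alpha_i,\beta_i)=0$ for each cycle $\alpha_i$ of $\alpha$ (so that $\mu_\beta$ is coherent exactly when $\beta\le\alpha$). For a cycle $\alpha_i=(i_1,\dots,i_k)$ the corresponding vertex is the circular permutation $\sigma''=(i_1^-,i_1^+,\dots,i_k^-,i_k^+)$, and by \cite[Theorem~1]{Cori} the restriction of $\mu_\beta$ there is noncrossing precisely when the one-vertex map $(\sigma'',\mu_\beta)$ has genus zero. Now $z(\sigma'')=1$ and $z(\mu_\beta)=k$, and since $\mu_\beta\sigma''$ sends $i_a^-\mapsto\beta_i(i_a)^-$ on the negative points and $i_a^+\mapsto(\beta_i^{-1}\alpha_i)(i_a)^+$ on the positive points, one gets $z(\mu_\beta^{-1}\sigma'')=z(\beta_i)+z(\beta_i^{-1}\alpha_i)$; comparing this with~\eqref{eq:genusdef} applied to $(\alpha_i,\beta_i)$ shows that $g(\sigma'',\mu_\beta)=0$ and $g(\alpha_i,\beta_i)=0$ hold simultaneously. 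This reconciliation of the combinatorial meaning of ``noncrossing'' with the genus condition is the step I expect to require the most care.

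Next one counts the closed paths of the noncrossing Eulerian state of $\mu_\beta$. Every directed edge of the medial digraph has the form $i^+\to\sigma(i)^-$, and inside a vertex the path continues along the internal arc from $\sigma(i)^-$ to $\mu_\beta(\sigma(i)^-)=(\beta^{-1}\sigma)(i)^+$, from which the next edge departs. Hence the closed paths are in bijection with the cycles of $\beta^{-1}\sigma$ acting on $\{1,\dots,n\}$, and the state has exactly $z(\beta^{-1}\sigma)$ of them. Summing over the bijection gives $j(M(\sigma,\alpha);x)=\sum_{\beta\le\alpha}x^{z(\beta^{-1}\sigma)}$.

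Finally one matches exponents. Since $(\sigma,\alpha)$ has genus zero, so does $(\sigma,\beta)$ for every $\beta\le\alpha$ (as already used in the proof of Theorem~\ref{thm:planardual}); applying~\eqref{eq:genusdef} to the collection $(\sigma,\beta)$, which has $\kappa(\sigma,\beta)$ components, yields $z(\beta^{-1}\sigma)=2\kappa(\sigma,\beta)+n-z(\beta)-z(\sigma)$. On the other hand, putting $u=v=x$ in Definition~\ref{def:wrg} and multiplying by $x^{\kappa(\sigma,\alpha)}$ gives $x^{\kappa(\sigma,\alpha)}R(\sigma,\alpha;x,x)=\sum_{\beta\le\alpha}x^{2\kappa(\sigma,\beta)+n-z(\beta)-z(\sigma)}$. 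These two sums agree term by term, proving the theorem. If desired one may reduce at the outset to the case $\kappa(\sigma,\alpha)=1$ of a single hypermap, since both sides are multiplicative over connected components --- the left side directly from its definition, the right side by Proposition~\ref{prop:product} --- with $\kappa$ additive.
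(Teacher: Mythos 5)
Your proof is correct and follows essentially the same route as the paper: the bijection $\beta\mapsto\mu_\beta$ between refinements and coherent matchings, the identification of the circuits of the state with the cycles of $\beta^{-1}\sigma$, and the genus-zero identity $z(\beta^{-1}\sigma)=2\kappa(\sigma,\beta)+n-z(\beta)-z(\sigma)$ to match exponents. The only difference is that you carefully justify, via the genus formula applied to each vertex $(i_1^-,i_1^+,\ldots,i_k^-,i_k^+)$ and Cori's noncrossing criterion, the equivalence between coherence of $\mu_\beta$ and $g(\alpha_i,\beta_i)=0$, a step the paper asserts as ``easy to see''; your verification of it is accurate.
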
  
\begin{proof}
Direct substitution into the definition of $R(\sigma,\alpha;u,v)$ yields
$$
R(\sigma,\alpha;x,x)=\sum_{\beta\leq\alpha}
x^{2\kappa(\sigma,\beta)+n-z(\beta)-z(\sigma)-\kappa(\sigma,\alpha)}. 
$$
Each connected component of $(\sigma,\beta)$ is a hypermap
$(\sigma_i,\beta_i)$ on $n_i$ points where $i=1,2,\ldots,\kappa(\sigma,\beta)$
and $n_1+n_2+\cdots+n_{\kappa(\sigma,\beta)}=n$. Since each
$(\sigma_i,\beta_i)$ has genus zero, we have
$$
n_i+2-z(\sigma_i)-z(\beta_i)-z(\beta_i^{-1}\sigma)=0
$$
Summation over $i$ and rearranging yields
$$
2\kappa(\sigma,\beta)+n-z(\beta)-z(\sigma)=z(\beta^{-1}\sigma).
$$
Hence we have
$$
x^{\kappa(\sigma,\alpha)}R(\sigma,\alpha;x,x)=\sum_{\beta\leq\alpha}
x^{z(\beta^{-1}\sigma)}.  
$$
Let $\mu$ be an coherent matching associated to the collection of medial
maps $M(\sigma,\alpha)$. We associate to $\mu$ the following refinement
$\beta$ of $\alpha$: if $\mu$ matches $i^+$ to $j^-$ then $\beta$ sends $i$ to
$j$. It is easy to see that  correspondence $\mu\mapsto \beta$ is a
bijection. Observe finally that the number of circuits of the
noncrossing Eulerian state associated to $\mu$ is $z(\beta^{-1}\sigma)$.  
For example, the noncrossing Eulerian state shown in Figure~\ref{fig:ncccp} is
induced by the coherent matching
$(1^+,2^-)(2^+,3^-)(3^+,1^-)(4^+,4^-)(5^+,5^-)(6^+,6^-)$ which
corresponds to the refinement $\beta=(1,2,3)(4)(5)(6)$ of $\alpha$. The
cycles of the permutation $\beta^{-1}\sigma=(1,5,3,2,6)(4)$  correspond
to the cycles $(1^+,5^-,5^+,1^-,3^+,3^-,2+,6^-,6+,2^-)$ and $(4^+,4^-)$
of the Eulerian state induced by $\mu$. In each cycle of the noncrossing
Eulerian state each negative point $i^-$ is followed by the positive
point $\beta^{-1}(i)^+$ and each positive point $i^+$ is followed by the
negative point $\sigma(i)^-$. Hence parsing the cycles of the
noncrossing Eulerian state amounts to a visual computation of
$\beta^{-1}\sigma$.    
\end{proof}

\section{A visual approach to computing the Whitney polynomial
  of a planar hypermap}
\label{sec:visual}

In this section we introduce a visual model based on a slight refinement of the
results~(\ref{eq:MartinT}) and (\ref{eq:cppT}) connecting the Tutte
polynomial of a planar map and the circuit partition polynomial of its
directed medial map. 

\begin{figure}[h]
\input{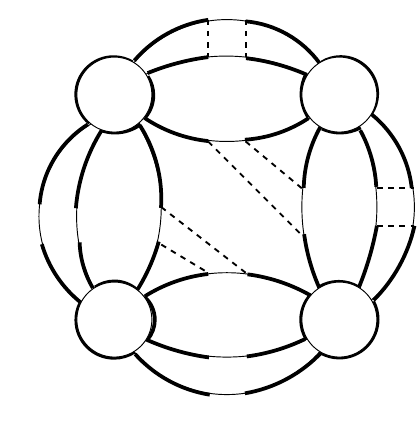_t}
\caption{A planar hypermap with the edges of its medial map shrunk to
  its outline}
\label{fig:papercuts}
\end{figure}
Consider the planar hypermap $(\sigma,\alpha)$ with 
$\sigma=(1,5,12)(4,11,10)(3,9,8)(2,7,6)$ and
$\alpha=(1,2,3,4)(5,6)(7,8)(9,10)(11,12)$, represented in
Figure~\ref{fig:papercuts}.  For each point $i$ we also added the points
$i^-$ and $i^+$ that are used to construct the medial map. The vertices
of the medial map are the hyperedges of the original map, and we
``shrink'' the edges $(i^+,\sigma(i)^-)$ of the medial map to follow
the outline of the original vertices and the hyperedges. For example,
$\sigma(1)=5$ and the edge $(1^+,5^-)$ of the medial map is represented
by the thick curve that begins at $1^+$, follows the outline of the
hyperedge to the vertex $(1,5)$, there it follows the outline of the
vertex $(1,5)$ towards the edge $(5,6)$ and finally it follows the
outline of the edge  $(5,6)$ to the point $5^-$. (There is no need to
direct the curves, 
orientation is defined by the fact that vertices are oriented
counterclockwise and hyperedges are oriented clockwise.) 
Adding such a thick curve for each edge
of the medial map results in fattening the outline of the diagram of
$(\sigma, \alpha)$, except for the following arcs:
\begin{enumerate}
\item The arcs on the vertices corresponding going from $i^-$ to
  $i^+$ counterclockwise are not 
  thick, except for the special case when $\sigma(i)=i$ (not shown in
  this example). In the typical situation the point $i^-$ is connected
  to $\sigma^{-1}(i)^+$ via a thick curve, the point $i^+$ is
  connected to $\sigma(i)^-$ via a thick curve. 
\item The arcs on the hyperedges representing the possibility of going
  from $i^+$ to $\alpha(i)^{-}$ are not thick, except for the special
  case when $i$ is a {\em bud}, that is, a fixed point of $\alpha$. Buds
  can be added to or removed from the picture without changing anything
  of substance.     
\end{enumerate}
Next we select a coherent matching on the signed points. Matching $i^+$
to $\alpha(i)^{-}$ is always a valid choice (and this is the only choice
for buds, which we may ignore), this selection does not
interfere with the rest. If we make this selection for all $i$, we simply
fill the gaps indicated by thin lines along the hyperedges, the circuits of our
circuit partition will simply represent all cycles of $\alpha^{-1}\sigma$,
that is, the faces  of $(\sigma,\alpha)$. A {\em nontrivial choice}  is
to match some $i^+$ to some point different from $\alpha(i)^-$. In
Figure~\ref{fig:papercuts} the nontrivial choices are represented by
dashed lines: $11^+$ is matched to $11^-$, $12^+$ is matched to $12^-$,
$4^+$  is matched to $2^-$ and so on. We can think of the diagram of
$(\sigma,\alpha)$ as a paper cutout, with the vertices and the
hyperedges being solid and the faces missing. Each nontrivial pair of
matched points corresponds then to a cut into the object using a
scissor, subject to the following rules:
\begin{itemize}
\item[(R1)] Each cut is a simple curve connecting a point $i^+$ with a
  point $j^-$, inside a hyperedge. (In particular, we are not allowed to
  cut into the vertex areas.) 
\item[(R2)] Each point $i^+$ and $j^-$ may be used at most once. (Giving a
  partial matching.)
\item[(R3)] The remaining points not used in the cuts
  must come in pairs $(i^+,\alpha(i)^-)$. (Together with the trivial
  choices we have a matching.) 
\item[(R4)] A new cut cannot cut into the cut-line of a previous cut (the
  matching needs to be coherent).   
\end{itemize}  
Nontrivial cuts replace the set $\alpha$ of hyperedges with a refinement
$\beta$ of $\alpha$. At the end of the process the curves of the outline
correspond to the faces $\beta^{-1}\sigma$ of the collection of
hypermaps $(\sigma,\beta)$ and they are also the circuits of the
corresponding circuit partition. 

So far our construction offers a visual proof of
Theorem~\ref{thm:genmartin} (and of~(\ref{eq:MartinT}) and
(\ref{eq:cppT})). Let us refine the picture now by counting connected
components visually as well. For a planar map, it is not unusual to think
of its unbounded face as special. Let us think of the unbounded face of
a hypermap $(\sigma,\alpha)$ as ``the ocean''  which ``makes its
coastline wet''. After making a few nontrivial cuts, the resulting collection of
hypermaps may have several connected components, each has one
coastline. Figure~\ref{fig:papercuts2} illustrates this situation after
performing the nontrivial cuts indicated in
Figure~\ref{fig:papercuts}. The shaded regions indicate the faces of
$\sigma,\alpha$ whose border has been merged with a wet coastline. The
wet coastlines are thickened.
\begin{figure}[h]
\input{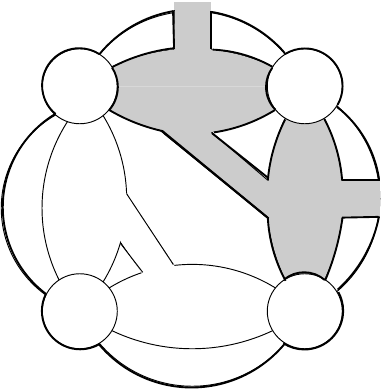_t}
\label{fig:papercuts2}
\end{figure}
To summarize, we obtain the following.
\begin{theorem}
\label{thm:visual}
Given a planar hypermap $(\sigma,\alpha)$, we may visually compute its
Whitney polynomial by making its model in paper, and performing the
cutting procedures subject to the rules (R1), (R2), (R3) and (R4) in all
possible ways and associating to each outcome $u$ raised to the power of
the wet coastlines and $v$ raised to the power of the dry faces. The sum
of all weights is $u\cdot R(\sigma,\alpha;u,v)$.  
\end{theorem}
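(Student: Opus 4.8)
The plan is to re-run the argument from the proof of Theorem~\ref{thm:genmartin}, but keeping the two variables separate. First I would observe that the possible outcomes of the cutting procedure subject to (R1)--(R4) correspond to the coherent matchings of the medial map $M(\sigma,\alpha)$: by (R1)--(R2) the nontrivial matched pairs $(i^+,j^-)$ form a partial matching realised by cuts inside the hyperedges, (R3) completes it to a full matching by adjoining the trivial pairs $(i^+,\alpha(i)^-)$, and (R4) is exactly the requirement that the resulting matching be coherent. Via the bijection used in the proof of Theorem~\ref{thm:genmartin} (``$i^+$ is matched to $j^-$ if and only if $\beta(i)=j$''), the outcomes of the cutting procedure are thus indexed by the refinements $\beta\le\alpha$. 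Moreover the same parsing computation shows that the outline curves obtained at the end of the process are exactly the cycles of $\beta^{-1}\sigma$, that is, the faces of the collection of hypermaps $(\sigma,\beta)$; there are $z(\beta^{-1}\sigma)$ of them.

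Next I would split these $z(\beta^{-1}\sigma)$ outline curves into wet coastlines and dry faces. Because $(\sigma,\alpha)$ has genus zero and $\beta\le\alpha$, each of the $\kappa(\sigma,\beta)$ connected components of $(\sigma,\beta)$ is itself a genus-zero hypermap (as already used in the proof of Theorem~\ref{thm:genmartin}), so $g(\sigma,\beta)=0$; in particular each component has a unique unbounded face, and the point of the visual model is that these unbounded faces are precisely the wet coastlines. Granting this, there are $\kappa(\sigma,\beta)$ wet curves and $z(\beta^{-1}\sigma)-\kappa(\sigma,\beta)$ dry ones, and the bookkeeping is immediate. By~\eqref{eq:expofv} with $g(\sigma,\beta)=0$, the number of dry faces equals
$$
z(\beta^{-1}\sigma)-\kappa(\sigma,\beta)=\kappa(\sigma,\beta)+n-z(\beta)-z(\sigma),
$$
which is the exponent of $v$ in the $\beta$-summand of Definition~\ref{def:wrg}. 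Since $(\sigma,\alpha)$ is a single hypermap we have $\kappa(\sigma,\alpha)=1$, so the number of wet coastlines equals $\kappa(\sigma,\beta)=1+\bigl(\kappa(\sigma,\beta)-\kappa(\sigma,\alpha)\bigr)$, that is, $1$ plus the exponent of $u$ in that same summand. Hence the weight recorded for the outcome indexed by $\beta$ is $u$ times the $\beta$-summand of $R(\sigma,\alpha;u,v)$, and summing over all outcomes---equivalently, over all $\beta\le\alpha$---yields $u\cdot R(\sigma,\alpha;u,v)$.

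The step that needs genuine care, and which I expect to be the main obstacle, is making the wet/dry count precise: that for each fixed $\beta$ exactly $\kappa(\sigma,\beta)$ of the outline curves are wet. One inequality is easy, since the boundary of the unbounded face of a connected plane hypermap is a single closed curve, so no component can contribute more than one wet coastline. The subtle point is that a nontrivial cut can leave a component nested inside a bounded face of another, and one must check that the outer curve of each newly separated component is still counted as wet---consistently with the scissor picture, in which every cut opens a channel through which the surrounding water reaches the newly exposed boundary. I would make this precise by induction on the number of nontrivial cuts, tracking at each step how the number of connected components changes and which outline curve becomes the coastline of each new component; once this is settled, the rest is the substitution carried out above.
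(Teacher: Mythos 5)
Your proposal follows the same route as the paper: the outcomes of the cutting procedure are the coherent matchings, hence the refinements $\beta\le\alpha$; the outline curves are the cycles of $\beta^{-1}\sigma$; and the genus-zero identity $2\kappa(\sigma,\beta)+n-z(\beta)-z(\sigma)=z(\beta^{-1}\sigma)$ converts the wet/dry count into the exponents of $u$ and $v$ exactly as in the paper's discussion preceding the theorem. The paper states the theorem as a summary of that informal discussion and simply asserts that each connected component of $(\sigma,\beta)$ has exactly one wet coastline, so your closing paragraph supplies precisely the justification the paper leaves implicit.
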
  

\section{Counting the noncrossing Eulerian colorings of the directed medial
  map of a planar hypermap}
\label{sec:medialc}

In this section we extend the formula counting the Eulerian colorings of
the medial graph of a plane graph~\cite[Evaluation
  6.9]{Ellis-Monaghan-exploring} to medial maps of planar hypermaps. The
following definition may be found in~\cite[Definition
  4.3]{Ellis-Monaghan-exploring}. 

\begin{definition} 
An {\em Eulerian $m$-coloring} of an Eulerian directed graph
$\overrightarrow{G}$ is an edge coloring
of $\overrightarrow{G}$ with $m$ colors so that for each color the
(possibly empty) set of all edges of the given color forms an Eulerian
subdigraph.  
\end{definition}

Consider now a planar hypermap $(\sigma,\alpha)$ and its directed
Eulerian medial map $M(\sigma,\alpha)$. Given an Eulerian $m$-coloring
of the edges, let us color the endpoints of $(i^+,\sigma(i)^-)$ with the
color of the edge. We call this coloring of the points the {\em coloring
  of the points induced by the Eulerian $m$-coloring}. In order to
relate the count of the Eulerian $m$-colorings to our Whitney
polynomial, we must restrict our attention to {\em noncrossing Eulerian
  $m$-colorings}, defined as follows.
\begin{definition}
Let  $(\sigma,\alpha)$ be a planar hypermap and let $M(\sigma,\alpha)$
be its directed medial map. We call an Eulerian $m$-coloring {\em
  noncrossing} if there is a noncrossing Eulerian state 
such that all edges of the same connected circuit have the same color.  
\end{definition}  
\begin{remark}
If $(\sigma,\alpha)$ is a map then the above noncrossing condition is
automatically satisfied by each Eulerian $m$-coloring of
$M(\sigma,\alpha)$: as noted in Remark~\ref{rem:maps-nc}, all Eulerian
states of $M(\sigma,\alpha)$ are noncrossing. The more general case
of partitioning the edge set of an Eulerian digraph into Eulerian
subdigraphs was addressed in the work of Bollob\'as and
Ellis-Monaghan~\cite{Arratia-Bollobas,Bollobas-cp,Ellis-Monaghan-mp}.      
\end{remark}

Using the induced coloring of the points we can verify vertex by vertex
whether an Eulerian $m$-coloring is non-crossing: it is necessary and
sufficient to be able to find a coherent matching at each vertex such
that only points of the same color are matched. This observation
motivates the following definition.

\begin{definition}
\label{def:legalc}  
Let $(\sigma,\alpha)$ be a planar hypermap on the set of points
$\{1,2,\ldots,n\}$ and $M(\sigma,\alpha)=(\sigma',\alpha')$ its directed
medial map on $\{1^-,1^+,2^-,2^+,\ldots,n^-,n^+\}$. We
call an $m$-coloring of the points $\{1^-,1^+,2^-,2^+,\ldots,n^-,n^+\}$
a {\em legal coloring} if it satisfies the following conditions:
\begin{enumerate}
\item The endpoints of each edge $(i^+,\sigma(i)^-)\in \alpha'$ of
  $M(\sigma,\alpha)$ have the same color.
\item There is a coherent matching of the set
  $\{1^-,1^+,2^-,2^+,\ldots,n^-,n^+\}$ such that each point is matched
  to a point of the same color. 
\end{enumerate}
\end{definition}  

Definition~\ref{def:legalc} is motivated by the following observation. 
\begin{proposition}
Given a planar hypermap $(\sigma,\alpha)$ and its directed medial map
$M(\sigma,\alpha)$ a coloring of the set of points of $M(\sigma,\alpha)$
is legal if and only if it is induced by a noncrossing Eulerian
$m$-coloring of the edges of $M(\sigma,\alpha)$.   
\end{proposition}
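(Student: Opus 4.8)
The plan is to prove the two implications separately, in each case translating between the language of edge colorings of $M(\sigma,\alpha)$ and the language of colorings of its signed points, using the dictionary recorded above: a noncrossing Eulerian state of $M(\sigma,\alpha)$ is exactly the datum of a coherent matching of its points, a matching refining $\sigma'$ that pairs positive points with negative points. I would also keep in mind the incidence structure: $\alpha'$ consists of the $2$-cycles $(i^+,\sigma(i)^-)$, so it is a perfect matching of the $2n$ points, and every circuit of an Eulerian state alternately follows an edge of $\alpha'$ and a pair of the coherent matching.

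First I would dispose of the easy direction. Suppose the point coloring is the one induced by a noncrossing Eulerian $m$-coloring $c$ of the edges. Condition~(1) of Definition~\ref{def:legalc} is immediate, since by definition the induced coloring assigns to both endpoints of $(i^+,\sigma(i)^-)$ the color $c$ of that edge. For condition~(2), "noncrossing" gives a noncrossing Eulerian state, hence a coherent matching $\mu$, whose circuits are monochromatic under $c$. Each circuit alternates edges of $\alpha'$ with pairs of $\mu$, and all edges of a circuit carry a single color; since the induced coloring copies each edge color onto both of its endpoints, every point occurring along a circuit — in particular the two points of each $\mu$-pair inside that circuit — gets that common color. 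Thus $\mu$ pairs only equicolored points, which is condition~(2).

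Next I would prove the converse. Assume the point coloring is legal, with witnessing coherent matching $\mu$. By condition~(1) every edge of $M(\sigma,\alpha)$ has monochromatic endpoints, so we may define an edge coloring $c$ by giving $(i^+,\sigma(i)^-)$ the common color of its endpoints; because $\alpha'$ is a perfect matching of the points, every point lies on exactly one edge and the coloring induced by $c$ is precisely the original one. It then remains to check that $c$ is a noncrossing Eulerian $m$-coloring. Consider the noncrossing Eulerian state determined by $\mu$. Walking around one of its circuits, the color is constant across each edge by condition~(1) and constant across each $\mu$-pair by condition~(2), so the color never changes: every circuit is $c$-monochromatic. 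Hence the set of edges of any fixed color is a union of circuits of this state; since the state partitions the edge set into directed cycles, such a union is a disjoint union of directed cycles, so it is an Eulerian subdigraph. Therefore $c$ is an Eulerian $m$-coloring, and the monochromatic noncrossing Eulerian state just exhibited witnesses that it is noncrossing.

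The whole argument is essentially bookkeeping once the correspondence "noncrossing Eulerian state $\leftrightarrow$ coherent matching" and the incidence structure of $M(\sigma,\alpha)$ are set up; the one place that deserves a moment of care is the claim, in the converse direction, that a union of circuits of a single Eulerian state is an Eulerian subdigraph. This holds because each circuit is a closed directed walk respecting the edge orientations, so it contributes equally to the in-degree and out-degree of every vertex it passes through, and these contributions simply add when we take the union over all circuits of a given color. The empty color classes allowed by the definition of an $m$-coloring cause no trouble, as the empty digraph is vacuously Eulerian.
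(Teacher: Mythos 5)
Your proof is correct and follows essentially the same route as the paper: translate between edge colorings and induced point colorings via condition~(1), and use the correspondence between coherent matchings and noncrossing Eulerian states to handle condition~(2). The only (harmless) difference is that the paper verifies Eulerian-ness of each color class by a positive/negative degree-balance count at each vertex of $\sigma'$, whereas you deduce it directly from the decomposition of each color class into monochromatic directed circuits of the state; both are valid and your version is, if anything, slightly more self-contained.
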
  
\begin{proof}
Condition (1) requires that the coloring of the signed points must be induced
by a coloring of the edges. This coloring of the signed points is
induced by an Eulerian coloring if and only if each cycle of $\sigma'$
contains the same number of positive points and negative points of each
color. The resulting partitioning of the edges into color sets is
represented by a noncrossing Eulerian state if and only if there is a
coherent matching connecting only points of the same color.        
\end{proof}  

Note that for a given $m$-coloring of the signed points, induced by a
coloring of the edges in $\alpha'$, condition~(2) may be independently
verified at each vertex of $\sigma'$. This observation motivates the
following definition.
\begin{definition}
Let $(i_1^-,i_1^+,i_2^-,i_2^+,\ldots,i_k^-,i_k^+)$ be a cyclic signed
permutation and let is fix an $m$-coloring of its points. We say that
the {\em valence} of this colored cycle is number of coherent matchings
of its points that match each point the a point of the same color.   
\end{definition}  

Now we are able to state the generalization of~\cite[Evaluation
  6.9]{Ellis-Monaghan-exploring}.

\begin{theorem}
Let $(\sigma,\alpha)$ be a planar hypermap. Then, for a fixed positive
integer $m$, we have
$$
m^{\kappa(\sigma,\alpha)} R(\sigma,\alpha;m,m)=\sum_{\lambda}
\prod_{v\in\sigma'} \nu(v,\lambda).
$$
Here the summation runs over all Eulerian $m$-colorings $\lambda$ of the
directed medial map $M(\sigma,\alpha)=(\sigma',\alpha')$, and for each vertex
$v\in\sigma'$ the symbol $\nu(v,\lambda)$ represents the valence of $v$
colored by the restriction of the point coloring induced by $\lambda$ to the
points of $v$.  
\end{theorem}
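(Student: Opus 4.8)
The plan is to reduce the statement to Theorem~\ref{thm:genmartin} by a coloring argument. First I would invoke Theorem~\ref{thm:genmartin} to rewrite the left-hand side as
\[
m^{\kappa(\sigma,\alpha)}R(\sigma,\alpha;m,m)=j(M(\sigma,\alpha);m)=\sum_{k\geq 0}f_k(\sigma,\alpha)\,m^k .
\]
The quantity $\sum_{k\geq 0}f_k(\sigma,\alpha)\,m^k$ counts pairs $(S,c)$, where $S$ is a noncrossing Eulerian state of $M(\sigma,\alpha)$ and $c$ assigns one of $m$ colors to each of the circuits of $S$, since a state with $k$ circuits admits exactly $m^k$ such functions $c$. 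So the goal becomes to re-enumerate these pairs grouped by the Eulerian $m$-coloring they induce.

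Next I would set up the obvious map sending a pair $(S,c)$ to the edge coloring $\lambda$ of $M(\sigma,\alpha)$ obtained by coloring every edge with the color $c$ assigns to the circuit of $S$ through it. Since the set of edges of any fixed color is a union of circuits, hence an Eulerian subdigraph, $\lambda$ is an Eulerian $m$-coloring, and it is noncrossing with witness $S$. The heart of the argument is then the fiber count: for a fixed Eulerian $m$-coloring $\lambda$, a pair $(S,c)$ maps to $\lambda$ exactly when every circuit of $S$ is monochromatic for $\lambda$ (in which case $c$ is forced), so the fiber size equals the number of noncrossing Eulerian states all of whose circuits are monochromatic.

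To compute this I would pass to coherent matchings. Recall that $S$ is determined by a coherent matching $\mu$, and, as recorded in the proof of Theorem~\ref{thm:genmartin}, each circuit of $S$ alternates edges $(i^+,\sigma(i)^-)$ of $\alpha'$ and matching arrows of $\mu$ inside the vertices; moreover the two endpoints of each edge $(i^+,\sigma(i)^-)$ carry the same $\lambda$-color in the induced point coloring. Hence two consecutive edges of a circuit agree in color if and only if the matching arrow joining them connects two points of equal color, so all circuits of $S$ are monochromatic if and only if $\mu$ matches each point to a point of the same color. Because a coherent matching is an independent choice, at each vertex $v$ of $\sigma'$, of a noncrossing positive-to-negative matching of the points of $v$, the number of color-respecting coherent matchings is $\prod_{v\in\sigma'}\nu(v,\lambda)$. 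Summing over all Eulerian $m$-colorings $\lambda$ then yields the identity; an Eulerian $\lambda$ that fails to be noncrossing contributes $0$ to both sides, since then $\nu(v,\lambda)=0$ for some $v$ and its fiber is empty.

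The step I expect to require the most care is this fiber computation: one must verify precisely that the ``no crossing'' requirement on $\mu$ coincides with the defining property of a coherent matching, that monochromaticity of the circuits of $S$ localizes exactly to the matching arrows of $\mu$ as claimed, and that the point coloring induced by $\lambda$ restricted to a vertex $v$ is the one entering the valence $\nu(v,\lambda)$. Once these are pinned down, the remaining bookkeeping is routine.
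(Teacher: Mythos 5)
Your proposal is correct and follows essentially the same route as the paper: both invoke Theorem~\ref{thm:genmartin} to identify the left-hand side with $j(M(\sigma,\alpha);m)$, interpret that quantity as counting pairs (noncrossing Eulerian state, $m$-coloring of its circuits), and then recount these pairs by first fixing the induced Eulerian $m$-coloring $\lambda$ and observing that the fiber is the set of color-respecting coherent matchings, which factors as $\prod_{v\in\sigma'}\nu(v,\lambda)$. Your explicit treatment of the fiber computation and of the vanishing contribution of non-noncrossing colorings is a welcome elaboration of details the paper leaves implicit, but it is the same argument.
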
  
\begin{proof}
By Theorem~\ref{thm:genmartin} the left hand side is
$j(M(\sigma,\alpha);m)$ which is the number of ways to select a
noncrossing Eulerian state, and then color each of its cycles with one
of $m$ colors. By the above reasoning, this may be also performed by
first selecting an Eulerian $m$-coloring $\lambda$ of the
directed medial map $M(\sigma,\alpha)$ and then a coherent matching on
each vertex that matches only points of the same color.  
\end{proof}  

\begin{example}
For maps $(\sigma,\alpha)$, there are essentially two types of vertices
in the directed medial map $M(\sigma,\alpha)$: monochromatic vertices
and vertices colored with two colors. (We cannot have more than two
colors because four points form two matched pairs.) For monochromatic
vertices there are $2$ coherent matchings, for vertices colored with two
colors we have only one way to match the points of the same color. This
gives exactly the formula~\cite[Evaluation
  6.9]{Ellis-Monaghan-exploring}:
$$
m^{\kappa(\sigma,\alpha)}R(\sigma,\alpha;m,m)=\sum_{\lambda} 2^{m(\lambda)}
$$
where $m(\lambda)$ is the number of monochromatic vertices. Evaluating
this formula at $m=2$ was used by Las Vergnas~\cite{LasVergnas-tp} to
describe the exact power of $2$ that divides $R(\sigma,\alpha;2,2)$ for
a map $(\sigma,\alpha)$, or equivalently the evaluation of its Tutte
polynomial at $(3,3)$.    
\end{example}

\section{The characteristic polynomial of a hypermap}
\label{sec:charpoly}

In this section we generalize the notion of a characteristic polynomial
from graphs and graded partially ordered sets to hypermaps. We begin with
having a closer look at the partially ordered set of refinements. 

\begin{proposition}
Let $\alpha$ be a permutation of $\{1,2,\ldots,n\}$ with $k$ cycles and
let $c_1,\ldots,c_k$ 
be the lengths of these cycles. Then the partially ordered set of all
refinements of $\alpha$, ordered by the refinement operation, is the
direct product
$$
[\id,\alpha]=\prod_{i=1}^k \operatorname{NC}(c_i).
$$
Here $\id$ is the identity permutation $(1)(2)\cdots(n)$ and 
$\operatorname{NC}(c_i)$ is the lattice of noncrossing partitions
on $c_i$ elements.
\end{proposition}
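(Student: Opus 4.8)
The plan is to reduce the statement, one cycle of $\alpha$ at a time, to the classical description of $\operatorname{NC}(c)$ as the interval from the identity to a $c$-cycle in the refinement order, using the characterization of genus zero quoted in the Preliminaries~(\cite[Theorem~1]{Cori}). First I would record that $[\id,\alpha]$ really is the poset of \emph{all} refinements of $\alpha$: applying the genus formula~\eqref{eq:genusdef} to a cycle $\alpha_i$ of length $c_i$ and the identity on its support gives $g(\alpha_i,\id)=0$, so $\id\le\alpha$; applying the same computation cycle by cycle to an arbitrary refinement $\beta$ shows $\id\le\beta\le\alpha$, so $\id$ is the minimum and $\alpha$ the maximum of this poset.

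Next I would unwind the definition of the refinement order. A permutation $\beta$ refines $\alpha$ exactly when it factors as $\beta=\beta_1\cdots\beta_k$, where $\beta_i$ is a permutation of the support of the cycle $\alpha_i$ satisfying $g(\alpha_i,\beta_i)=0$, and the factors $\beta_i$ may be prescribed independently. Hence, as a set, the refinements of $\alpha$ form the Cartesian product over $i$ of the sets of genus-zero refinements of the individual cycles $\alpha_i$; the additivity of $z$ and of $\kappa$ over disjoint supports is exactly the bookkeeping already carried out in the proof of Proposition~\ref{prop:product}. I would then check that the order is componentwise: if $\beta$ and $\beta'$ both refine $\alpha$ and $\beta$ refines $\beta'$, then each cycle of $\beta$ lies inside a cycle of $\beta'$, which in turn lies inside a single $\alpha_j$, so the decomposition of $\beta$ refines that of $\beta'$ coordinate by coordinate. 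The converse implication, together with the transitivity of the refinement relation, is postponed to the last step. This reduces the proposition to the case $k=1$: for a circular $\alpha_i$ of length $c_i$, the poset of permutations $\beta_i$ with $g(\alpha_i,\beta_i)=0$, ordered by refinement, is isomorphic to $\operatorname{NC}(c_i)$.

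For the case $k=1$ I would invoke~\cite[Theorem~1]{Cori}: $g(\alpha_i,\beta_i)=0$ holds if and only if the cycles of $\beta_i$ list the blocks of a noncrossing partition of the support of $\alpha_i$ in the circular order determined by $\alpha_i$. This makes $\beta_i\mapsto\{\text{cycles of }\beta_i\}$ a well-defined surjection onto $\operatorname{NC}(c_i)$, and it is injective because once the block set is fixed, the genus-zero condition forces the cyclic order of $\beta_i$ inside each block to be the one induced by $\alpha_i$, which determines $\beta_i$ uniquely. Finally I would verify that this bijection is an isomorphism of posets: if $\beta_i$ refines $\beta_i'$ then each cycle of $\beta_i$ sits inside a cycle of $\beta_i'$, so the associated partitions are comparable; conversely, if the partition of $\beta_i$ refines that of $\beta_i'$, then inside each block of $\beta_i'$ the cycles of $\beta_i$ form a noncrossing partition for the circular order induced by $\beta_i'$ (a restriction of the order induced by $\alpha_i$), whence $g(\beta_i',\beta_i)=0$, i.e.\ $\beta_i$ refines $\beta_i'$. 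This same remark supplies both the transitivity of refinement and the missing direction left open above, completing the argument.

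I expect the main obstacle to be precisely this last verification: establishing that the refinement relation on permutations, restricted to the fibre above a single cycle, coincides \emph{as a partial order} — not merely on underlying sets — with refinement of noncrossing partitions. Concretely, one must show that ``genus zero'' is hereditary along the bijection, i.e.\ that an interval of $[\id,\alpha_i]$ is again a product of noncrossing partition lattices; this is a standard fact about $\operatorname{NC}$, but in the present permutation-theoretic language it requires a careful argument relating $g(\alpha_i,\beta_i')=0$, $g(\beta_i',\beta_i)=0$ and $g(\alpha_i,\beta_i)=0$ through the genus formula~\eqref{eq:genusdef}. Everything else is the kind of cycle-counting bookkeeping already performed in Proposition~\ref{prop:product}.
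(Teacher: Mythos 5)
Your argument is correct, and it follows the route the paper implicitly intends: the paper states this proposition without proof, relying on exactly the ingredients you assemble, namely the cycle-by-cycle decomposition of the refinement relation and \cite[Theorem~1]{Cori} identifying genus-zero refinements of a single cycle with noncrossing partitions. You rightly flag, and correctly dispose of, the one point needing care — that refinement of the associated partitions implies refinement of permutations, which rests on the fact that a partition noncrossing for the circular order of $\alpha_i$ remains noncrossing for the induced circular order on each block of $\beta_i'$.
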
  
The M\"obius function of the noncrossing partition lattice is known to
be
$$
\mu(\operatorname{NC}(n))=(-1)^{n-1} C_n
$$
where $C_n$ is the $n$th Catalan number~\cite[Formula (2)]{Simion-noncrossing}.
Using this result, the M\"obius function of any interval $[\beta,\alpha]$ may be
expressed using the multiplicativity of the M\"obius function and the
observation that each interval in the noncrossing partition lattice is
also a product of intervals~\cite[p.\ 401]{Simion-noncrossing}. We will use the
notation $\mu(\beta,\alpha)$ to denote the M\"obius function of the
interval $[\beta,\alpha]$ ordered by refinement. As explained above,
$\mu(\beta,\alpha)$ is always a signed product of Catalan numbers.
\begin{remark}
\label{rem:maps}
When $(\sigma,\alpha)$ is a collection of maps then for each refinement
$\beta\leq 
\alpha$ we have $\mu(\beta,\alpha)=(-1)^{z(\beta)-z(\alpha)}$.
\end{remark}  

\begin{definition}
\label{def_char}
Given a collection of hypermaps $(\sigma,\alpha)$ on the set of
points $\{1,2,\ldots,n\}$, we define its {\em
  characteristic polynomial $\chi(\sigma,\alpha;t)$} by 
$$\chi(\sigma,\alpha;t)=
\sum_{\beta\leq
    \alpha}\mu(\id,\beta)\cdot
t^{\kappa(\sigma,\beta)-\kappa(\sigma,\alpha)}. 
$$ 
\end{definition}
\begin{example}
When $(\sigma,\alpha)$ is a collection of maps on the set
$\{1,2,\ldots,n\}$, as noted in Remark~\ref{rem:maps}, we get  
$$
\chi(\sigma,\alpha;t)=
\sum_{\beta\leq
    \alpha}(-1)^{n-z(\beta)}\cdot t^{\kappa(\sigma,\beta)-\kappa(\sigma,\alpha)}
$$
which is exactly the characteristic polynomial of its underlying
graph. In this case we may also write
\begin{equation}
\label{eq:charpmap}  
\chi(\sigma,\alpha;t)=(-1)^{z(\sigma)-\kappa(\sigma,\alpha)}
R(\sigma,\alpha;-t,-1),  
\end{equation}
or, equivalently, 
$$
\chi(\sigma,\alpha;t)=(-1)^{z(\sigma)-\kappa(\sigma,\alpha)}
T(\sigma,\alpha;1-t,0),  
$$
where $T(\sigma,\alpha;x,y)$ is the Tutte polynomial of the map. 
\end{example} 

\begin{example}
If $\sigma=(1)(2)\cdots (n)$ and $\alpha=(1,2,\ldots,n)$ then the set of
refinements of $\alpha$ is the noncrossing partition lattice
$\operatorname{NC}(n)$, and
$\kappa(\sigma,\beta)-\kappa(\sigma,\alpha)=z(\beta)-1$ is the rank
$\rank(\beta)$ of the
noncrossing partition represented by $\beta$. The lattice
$\operatorname{NC}(n)$ is a graded partially ordered set and
$\chi(\sigma,\alpha;t)$ is exactly the characteristic polynomial
$$
\chi(\operatorname{NC}(n);t)=\sum_{\beta\leq \alpha} \mu(\id,\beta)
t^{\rank(\beta)} 
$$
as it is usually defined for such posets. It has been
shown by Jackson~\cite{Jackson} (a second proof was given by Dulucq and
Simion~\cite[Corollary 5.2]{Dulucq-Simion}) that for $n\geq 1$
\begin{equation}
\label{eq:charnc}  
\chi(\operatorname{NC}(n+1);t)=(-1)^{n} (1-t) R(\operatorname{NC}(n); (1-t))
\end{equation}
holds, where $R(\operatorname{NC}(n-1); t)$ is the rank generating
function of $\operatorname{NC}(n)$. This equation may look encouragingly
similar to (\ref{eq:charpmap}), but the careful reader should note the
shift of ranks in (\ref{eq:charnc}).
\end{example}  
M\"obius inversion works in this setting as follows. For a fixed
collection of hypermaps $(\sigma,\alpha)$, let us define the function
$X([\alpha_1,\alpha_2];t)$ on the intervals of the partially ordered set
of the refinements of $\alpha$ by
\begin{equation}
\label{eq:bigchi}  
X([\alpha_1,\alpha_2];t)=\sum_{\beta\in[\alpha_1,\alpha_2]}
  \mu(\alpha_1,\beta)\cdot
  t^{\kappa(\sigma,\beta)}
\end{equation}
By definition we have
\begin{equation}
\label{eq:charchr}
t^{\kappa(\sigma,\alpha)}\chi(\sigma,\alpha;t)=X([\id,\alpha];t)
\end{equation}
The usual M\"obius inversion formula computation yields
\begin{align*}
  \sum_{\beta\in [\alpha_1,\alpha_2]} X([\beta,\alpha_2];t)&=
  \sum_{\beta\in [\alpha_1,\alpha_2]}
  \sum_{\beta_1\in[\beta,\alpha_2]}
  \mu(\beta,\beta_1)\cdot
  t^{\kappa(\sigma,\beta_1)}\\
&=  \sum_{\beta\in [\alpha_1,\alpha_2]}
    t^{\kappa(\sigma,\beta_1)} 
    \sum_{\beta\in [\alpha_1,\beta_1]} \mu(\beta,\beta_1)\\
 &=  \sum_{\beta_1\in [\alpha_1,\alpha_2]}
    t^{\kappa(\sigma,\beta_1)}\cdot
    \delta_{\alpha_1,\beta_1} 
   =t^{\kappa(\sigma,\alpha_1)}.
\end{align*}
Here $\delta_{\alpha_1,\beta_1}$ is the Kronecker delta, hence we obtain
\begin{equation}
\label{eq:chromaticg}  
  \sum_{\beta\in [\alpha_1,\alpha_2]} X([\beta,\alpha_2];t)
  =
t^{\kappa(\sigma,\alpha_1)}.
\end{equation}  

Substituting $\alpha_1=\id$ and $\alpha_2=\alpha$ in
\eqref{eq:chromaticg} we get 
$$
\sum_{\beta\leq \alpha} X([\beta,\alpha];t)
=t^{\kappa(\sigma,\id)}.
$$
Using the fact that $\kappa(\sigma,\id)=z(\sigma)$ is the number of
vertices, we obtain 
\begin{equation}
\label{eq:chromatic}
\sum_{\beta\leq \alpha}
X([\beta,\alpha];t)=t^{z(\sigma)}. 
\end{equation}
For maps, Equation~(\ref{eq:chromatic}) is known to have the following
interpretation. Let us set $t=m$ for some positive integer $m$.
The number $m^{z(\sigma)}$ on the right hand side is then the number of ways to
color the vertices of $(\sigma,\alpha)$ using $m$ colors. For any such
coloring let $\beta$ be the refinement of $\alpha$ whose two-cycles are
exactly the edges that connect different vertices of the same color. Let
us call $\beta$ the {\em type} of the coloring. Using
Equation~(\ref{eq:chromaticg}) (whose derivation is simply
inclusion-exclusion in the case of maps) it is not hard to show that
$X([\beta,\alpha];t)$ is the the  
number of  all $n$-colorings of the vertices of $(\sigma,\alpha)$ that have type
$\beta$. The chromatic polynomial 
$m^{\kappa(\sigma,\alpha)}\chi(\sigma,\alpha;m)=X([\id,\alpha];m)$ is the
number of ways to color the vertices using $m$ colors such that adjacent
vertices have the same color.   
\begin{remark}
\label{rem:loop}  
If the map $(\sigma,\alpha)$ contains a loop edge, that is, a cycle
$(i,j)$ of $\alpha$ connects two points on the same cycle of $\sigma$
then each $\beta$ that is the type of a coloring of the vertices
contains $(i,j)$ and we have
$\kappa(\sigma,\alpha)\chi(\sigma,\alpha;t)=X([\id,\alpha];t)=0$. 
\end{remark}
We may extend the approach outlined above to collections of hypermaps
$(\sigma,\alpha)$ having hyperedges of length at most $3$. In this
generalization the following notion plays a key role.
\begin{definition}
Let $(\sigma,\alpha)$ be a collection of hypermaps and let $\alpha_1$ be
a refinement of $\alpha$. We say that a coloring of the cycles of
$\sigma$ is {\em $(\alpha_1,\alpha)$-compatible} if any pair of vertices
incident to the same cycle of $\alpha_1$ have the same color, and any
pair of vertices incident to the same cycle of $\alpha$ but not to the
same cycle of $\alpha_1$ have different colors.  
\end{definition}  

\begin{remark}
For collections of maps, selecting a refinement of $\alpha$ amounts
  to selecting a subset of edges. A vertex-coloring is
  $(\alpha_1,\alpha)$-compatible if the color is constant on each
  connected component of $(\sigma,\alpha_1)$, and vertices connected by
  an edge that belongs to $\alpha$ but by no edge belonging to
  $\alpha_1$ have different colors. The set of
  $(\alpha_1,\alpha)$-compatible vertex colorings may be empty: for
  $\sigma=(1,2)(3,4)(5,6)$, $\alpha=(2,3)(4,5)(1,6)$ and
  $\alpha_1=(2,3)(4,5)$ all vertices belong to the same connected
  component of $(\sigma,\alpha_1)$ (the underlying graph is a path of
  length $2$), hence all vertices must have the same color. On the other
  hand, the edge $(1,6)$ belongs to $\alpha$ only. Hence 
  the vertices $(1,2)$ and $(5,6)$ must also have different colors.
\end{remark}  

\begin{lemma}
\label{lem:alphau}
Let $(\sigma,\alpha)$ be a collection of hypermaps such that each cycle
of $\alpha$ has length at most $3$. Then for every coloring of the
cycles of $\sigma$ there is a unique refinement $\alpha_1$ of $\alpha$
such that the coloring is $(\alpha_1,\alpha)$-compatible. 
\end{lemma}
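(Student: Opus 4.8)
The plan is to construct the required refinement $\alpha_1$ explicitly, one cycle of $\alpha$ at a time, and then to read uniqueness off the same construction. Fix a coloring $c$ of the cycles of $\sigma$ and, for a point $p$, write $v_p$ for the cycle of $\sigma$ containing $p$. For each cycle $C$ of $\alpha$, let $\pi_C$ be the partition of the underlying point set of $C$ whose blocks are the maximal sets of points sharing a common vertex-color, i.e.\ $p$ and $q$ lie in the same block of $\pi_C$ exactly when $c(v_p)=c(v_q)$. Because $C$ has at most three points, \emph{every} partition of its point set is noncrossing; in particular $\pi_C$ is noncrossing, and so --- by the characterization of genus zero recalled in the Preliminaries --- it corresponds to a permutation $\beta_C$ of the points of $C$ with $g(C,\beta_C)=0$. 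Let $\alpha_1$ be obtained from $\alpha$ by replacing each cycle $C$ with the corresponding $\beta_C$. By construction $\alpha_1\leq\alpha$, and I claim this $\alpha_1$ is the unique refinement for which $c$ is $(\alpha_1,\alpha)$-compatible.

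First I would verify compatibility of this $\alpha_1$. Any step of a walk in $(\sigma,\alpha_1)$ stays either inside a cycle of $\sigma$, hence inside one vertex, or inside a cycle of $\alpha_1$, hence inside a single block of some $\pi_C$; in either case the vertex-color does not change, so $c$ is constant on each connected component of $(\sigma,\alpha_1)$, which gives the first compatibility condition. For the second, suppose two points $p,q$ of a common cycle $C$ of $\alpha$ lie in different cycles of $\alpha_1$; then $p$ and $q$ lie in different blocks of $\pi_C$, so $c(v_p)\neq c(v_q)$ by the defining property of $\pi_C$, which is exactly what the second condition demands for the pair of vertices $v_p,v_q$.

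Next I would prove uniqueness. Let $\alpha_1'$ be any refinement of $\alpha$ for which $c$ is $(\alpha_1',\alpha)$-compatible, fix a cycle $C$ of $\alpha$, and take points $p,q\in C$. If $p$ and $q$ lie in the same cycle of $\alpha_1'$, then $v_p$ and $v_q$ are incident to a common cycle of $\alpha_1'$, so the first compatibility condition forces $c(v_p)=c(v_q)$. If instead $p$ and $q$ lie in different cycles of $\alpha_1'$, they exhibit $v_p$ and $v_q$ as a pair of vertices incident to the common cycle $C$ of $\alpha$ but with $p,q$ on different cycles of $\alpha_1'$, so the second compatibility condition forces $c(v_p)\neq c(v_q)$. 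Hence $p$ and $q$ lie in the same cycle of $\alpha_1'$ if and only if $c(v_p)=c(v_q)$, i.e.\ if and only if they lie in the same block of $\pi_C$; thus $\alpha_1'$ agrees with $\beta_C$ on the points of $C$. As this holds for every cycle $C$ of $\alpha$, we get $\alpha_1'=\alpha_1$.

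The only substantive point, and the one place where the hypothesis that each cycle of $\alpha$ has length at most three is genuinely used, is the claim that $\pi_C$ is noncrossing: this is automatic for point sets of size at most $3$ but fails otherwise --- for a cycle $(i,j,k,l)$ of $\alpha$, a coloring giving $v_i,v_k$ one color and $v_j,v_l$ a different one produces the crossing partition with blocks $\{i,k\}$ and $\{j,l\}$, which is not a legitimate refinement, so neither existence nor uniqueness survives in that generality. A minor bookkeeping nuisance, which I would dispatch in passing, is that a cycle of $\alpha$ may meet a vertex in more than one point, or may be a fixed point of $\alpha$; in these degenerate cases the relevant ``pair'' of incident vertices either coincides or is empty, and the statements above hold trivially --- consistently with the fact that, by construction of $\pi_C$, two points of $C$ lying in the same vertex are never separated by $\alpha_1$.
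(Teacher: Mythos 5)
Your proof is correct and follows essentially the same route as the paper's: both identify the cycles of $\alpha_1$ inside each cycle $C$ of $\alpha$ as the maximal sets of points whose vertices share a common color, and both rest on the observation that every partition of a set of at most three elements is noncrossing. You merely spell out the compatibility check and the uniqueness argument that the paper compresses into the assertion that this is ``the only way to choose the cycles of $\alpha_1$.''
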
  
\begin{proof}
Each cycle of $\alpha_1$ must be contained in some cycle of
$\alpha$. Consider a cycle $(i_1,\ldots,i_k)$ of $\alpha$ (where $k\leq
3$). The only way to choose the cycles of $\alpha_1$ contained in
$\alpha$ is to partition the set $\{i_1,\ldots,i_k\}$ into cycles in
such a way that each cycle is formed exactly by all points belonging to
vertices of the same color. Such a partitioning is possible, because
every partition of a set of size at most $3$ is a noncrossing partition. 
\end{proof}

Combining Equation~(\ref{eq:chromaticg}) with Lemma~\ref{lem:alphau} we
can show the following.
\begin{proposition}
\label{prop:3chr}  
Let $(\sigma,\alpha)$ be a collection of hypermaps such that each cycle
of $\alpha$ has length at most $3$. Then for each refinement $\alpha_1$ of
$\alpha$ and each nonnegative integer $m$, $X([\alpha_1,\alpha];m)$ is the
number of $(\alpha_1,\alpha)$-compatible colorings of the cycles of
$\sigma$ with $m$ colors.
\end{proposition}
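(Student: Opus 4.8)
The plan is to give a combinatorial meaning to both sides after summing over a principal up-set, and then invert. Fix the collection of hypermaps $(\sigma,\alpha)$ and the number of colors $m$. For a refinement $\beta\le\alpha$, let $N(\beta)$ denote the number of $m$-colorings of the cycles of $\sigma$ that are $(\beta,\alpha)$-compatible, and let $C(\beta)$ denote the set of $m$-colorings of the cycles of $\sigma$ that are constant on every connected component of $(\sigma,\beta)$; clearly $|C(\beta)|=m^{\kappa(\sigma,\beta)}$, since such a coloring is the same as a function from the set of components of $(\sigma,\beta)$ to an $m$-element set. The crux is the identity
\[
C(\beta)=\bigsqcup_{\gamma\,:\,\beta\le\gamma\le\alpha}\{\,\text{$m$-colorings that are $(\gamma,\alpha)$-compatible}\,\},
\]
the union being disjoint because, by Lemma~\ref{lem:alphau}, a given coloring is $(\gamma,\alpha)$-compatible for at most one $\gamma$.

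To prove this identity I would check the two inclusions. For the inclusion of the right-hand side into $C(\beta)$: if $c$ is $(\gamma,\alpha)$-compatible then the first clause of the definition says $c$ is constant on each cycle of $\gamma$; walking along a trajectory of the group generated by $\sigma$ and $\gamma$ (each step either fixes the current vertex, when it applies $\sigma^{\pm1}$, or moves to a vertex incident to a common $\gamma$-cycle, when it applies $\gamma^{\pm1}$) shows that $c$ is in fact constant on each connected component of $(\sigma,\gamma)$; since $\beta\le\gamma$ forces every cycle of $\beta$ to lie inside a cycle of $\gamma$, the orbits of $\langle\sigma,\beta\rangle$ refine those of $\langle\sigma,\gamma\rangle$, so $c$ is constant on the components of $(\sigma,\beta)$ as well, i.e.\ $c\in C(\beta)$. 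For the reverse inclusion, take $c\in C(\beta)$; by Lemma~\ref{lem:alphau} there is a unique refinement $\gamma\le\alpha$ for which $c$ is $(\gamma,\alpha)$-compatible, obtained (as constructed in the proof of that lemma) on each cycle $(i_1,\dots,i_k)$ of $\alpha$, with $k\le 3$, by partitioning $\{i_1,\dots,i_k\}$ according to the color of the incident vertex, which is legitimate precisely because every partition of a set of size at most $3$ is noncrossing. It remains to see that $\beta\le\gamma$: any cycle of $\beta$ refining $(i_1,\dots,i_k)$ consists of points lying in a single orbit of $\langle\sigma,\beta\rangle$, hence in vertices belonging to one component of $(\sigma,\beta)$, hence of one color, hence inside a single block of $\gamma$. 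So $\beta\le\gamma\le\alpha$ and $c$ lies in the corresponding summand.

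Granting the identity, taking cardinalities gives $m^{\kappa(\sigma,\beta)}=\sum_{\gamma\in[\beta,\alpha]}N(\gamma)$ for every $\beta\le\alpha$. On the other hand, specializing Equation~\eqref{eq:chromaticg} with $\alpha_1=\beta$, $\alpha_2=\alpha$ and $t=m$ gives $m^{\kappa(\sigma,\beta)}=\sum_{\gamma\in[\beta,\alpha]}X([\gamma,\alpha];m)$. Hence the two functions $\gamma\mapsto N(\gamma)$ and $\gamma\mapsto X([\gamma,\alpha];m)$ on the finite poset $[\id,\alpha]$ have equal sums over every interval $[\beta,\alpha]$; downward induction on $\beta$ starting at $\beta=\alpha$ (equivalently, M\"obius inversion in the refinement order) forces $N(\beta)=X([\beta,\alpha];m)$ for all $\beta\le\alpha$, which is the assertion.

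The step I expect to be the real work is the reverse inclusion in the displayed identity, specifically the verification that $\beta\le\gamma$ for $\gamma$ the refinement supplied by Lemma~\ref{lem:alphau}: this is where the hypothesis that every hyperedge has length at most $3$ is actually used (so that the color-class partition of each cycle of $\alpha$ is automatically noncrossing and therefore a genuine refinement), and one has to argue carefully that being constant on the components of $(\sigma,\beta)$ really does force each block of $\beta$ inside a cycle of $\alpha$ to be monochromatic. Everything else is routine bookkeeping with permutation-group orbits together with a standard inversion.
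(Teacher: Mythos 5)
Your proof is correct and follows essentially the same route as the paper: both decompose the $m^{\kappa(\sigma,\beta)}$ colorings that are constant on the components of $(\sigma,\beta)$ into the disjoint classes of $(\gamma,\alpha)$-compatible colorings for $\gamma\in[\beta,\alpha]$ (via Lemma~\ref{lem:alphau}), compare with Equation~\eqref{eq:chromaticg}, and conclude by downward induction from $\beta=\alpha$. Your write-up is somewhat more explicit about the two inclusions (in particular the verification that $\beta\leq\gamma$), but the argument is the same as the paper's.
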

\begin{proof}
We proceed by induction on $z(\alpha_1)-z(\alpha)$. The basis of the
induction is the case $\alpha_1=\alpha$: the
$(\alpha,\alpha)$-compatible colorings are exactly the ones that are
constant on each connected component of $(\sigma,\alpha)$. The number of
such colorings is $m^{\kappa(\sigma,\alpha)}$. For the induction step,
observe that $m^{\kappa(\sigma,\alpha_1)}$ is the number of all
$m$-colorings of the vertex set that are constant on the connected
components of $(\sigma,\alpha_1)$. By Lemma~\ref{lem:alphau} each such
coloring is $(\beta,\alpha)$-compatible for a unique refinement $\beta$
of $\alpha_1$ and the fact that two vertices incident to the same cycle
of $\alpha_1$ have the same color forces $\alpha_1\leq
\beta$. Conversely, if an $m$-coloring of the cycles of $\sigma$ is
$(\beta,\alpha)$-compatible for some $\beta\in ]\alpha_1,\alpha_2]$ then
    it must be constant on the connected components of
    $(\sigma,\alpha_1)$. Substituting $\alpha_2=\alpha$ and $t=m$ into
    Equation~(\ref{eq:chromaticg}) yields 
    $$
X([\alpha_1,\alpha];m)
  =
n^{\kappa(\sigma,\alpha_1)}-\sum_{\alpha_1<\beta\leq\alpha}
X([\beta,\alpha];m) .
$$
By our induction hypothesis, on the right hand side we have the number
of all $m$-colorings of the vertex set that are constant on each
connected component of $(\sigma,\alpha_1)$ but which are not
$(\beta,\alpha)$-compatible for any refinement $\beta$ of $\alpha$
properly containing $\alpha_1$ as a refinement. By subtraction
$X([\alpha_1,\alpha];m)$ is then the number of
$(\alpha_1,\alpha)$-compatible $m$-colorings of the vertex set.
\end{proof}  
The most important special case of Proposition~\ref{prop:3chr} is the
following result. 
\begin{theorem}
\label{thm:3chr}  
Let $(\sigma,\alpha)$ be a collection of hypermaps such that each cycle
of $\alpha$ has length at most $3$. Then for any positive integer $m$,
the number $m^{\kappa(\sigma,\alpha)}\cdot \chi(\sigma,\alpha,m)$ is the
number of ways to $m$-color the vertices of $(\sigma,\alpha)$ in such a way that
no two vertices of the same color are incident to the same cycle of $\alpha$.
\end{theorem}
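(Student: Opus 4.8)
The plan is to obtain Theorem~\ref{thm:3chr} as the instance $\alpha_1=\id$ of Proposition~\ref{prop:3chr}, bridged by the identity~\eqref{eq:charchr}. First I would rewrite the left hand side: since $\id\leq\alpha$, Equation~\eqref{eq:charchr} gives $m^{\kappa(\sigma,\alpha)}\chi(\sigma,\alpha;m)=X([\id,\alpha];m)$, so it suffices to interpret $X([\id,\alpha];m)$ as a count of colorings.

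Next I would apply Proposition~\ref{prop:3chr}, which is available because every cycle of $\alpha$ has length at most $3$: it states that $X([\id,\alpha];m)$ is the number of $(\id,\alpha)$-compatible colorings of the cycles of $\sigma$ with $m$ colors. The one remaining step is to unravel what $(\id,\alpha)$-compatibility means. Since each cycle of $\id$ consists of a single point, no pair of vertices is incident to a common cycle of $\id$, so the first condition in the definition of $(\alpha_1,\alpha)$-compatibility holds vacuously; and two points lie on a common cycle of $\alpha$ but on distinct cycles of $\id$ exactly when they are two distinct points of a common cycle of $\alpha$. Hence the second condition reduces precisely to the requirement that any two vertices incident to the same cycle of $\alpha$ receive different colors. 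This is exactly the coloring condition in the statement, so $m^{\kappa(\sigma,\alpha)}\chi(\sigma,\alpha;m)$ counts such colorings.

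I would close by noting the degenerate case, the hypermap analogue of a loop in a map (compare Remark~\ref{rem:loop}): if a cycle of $\alpha$ meets some cycle of $\sigma$ in two or more points, then two distinct points of that $\alpha$-cycle lie on one and the same vertex, so no $m$-coloring can place them on differently colored vertices; both sides of the asserted equality are then $0$, consistently with $X([\id,\alpha];m)=0$. I do not anticipate a genuine obstacle in this argument: the substantive content, namely the use of the hypothesis that every cycle of $\alpha$ has length at most $3$ through Lemma~\ref{lem:alphau} (each vertex coloring is $(\beta,\alpha)$-compatible for a unique refinement $\beta$) together with the inclusion-exclusion in Proposition~\ref{prop:3chr}, is already in place. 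The only care needed is the bookkeeping that keeps points, cycles of $\id$, cycles of $\alpha$, and cycles of $\sigma$ (the vertices) straight while specializing the compatibility condition.
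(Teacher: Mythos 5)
Your proposal is correct and follows essentially the same route as the paper: substitute $\alpha_1=\id$ into Proposition~\ref{prop:3chr}, identify the left hand side via~\eqref{eq:charchr}, and observe that $(\id,\alpha)$-compatibility reduces exactly to the properness condition of the theorem. Your extra remark on the degenerate ``loop'' case is consistent with Remark~\ref{rem:loop} but not needed for the argument.
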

\begin{proof}
Setting $\alpha_1$ equal to the identity permutation in
Proposition~\ref{prop:3chr} implies that $X([\id,\alpha],m)$ is the
number proper $m$-colorings we want to count. By~\eqref{eq:charchr} this
number also equals $m^{\kappa(\sigma,\alpha)}\cdot
\chi(\sigma,\alpha,m)$.      
\end{proof}  

The above reasoning cannot be extended to arbitrary hypermaps, as we
can see it in the following example.
\begin{example}
\label{ex:4cycle}  
Consider the hypermap $(\sigma,\alpha)$ where $\sigma=(1)(2)(3)(4)$ and
$\alpha=(1,2,3,4)$. Set $m=2$, color $(1)$ and $(3)$ with the same
first color, and color $(2)$ and $(4)$ with the same second color. The
only permutation of $\{1,2,3,4\}$ whose cycles connect the vertices of
the same color would be $\beta=(1,3)(2,4)$ but this permutation is not a
refinement of $\alpha$: the cycle $(1,3)$ crosses the cycle $(2,4)$ in
the cyclic order of $\alpha$.
\end{example}

\section{The flow polynomial of a hypermap}
\label{sec:flowpoly}

In this section with generalize the flow polynomial to hypermaps. 

\begin{definition}
We define the {\em flow polynomial $C(\sigma,\alpha;t)$} of a
collection of hypermaps
$(\sigma,\alpha)$ on the set of points $\{1,2,\ldots,n\}$ by
$$
C(\sigma,\alpha;t)=\sum_{\beta\leq \alpha} \mu(\beta,\alpha)
t^{n+\kappa(\sigma,\beta)-z(\beta)-z(\sigma)}. 
$$
\end{definition}  
If $(\sigma,\alpha)$ is a collection of maps, as noted in
Remark~\ref{rem:maps}, we get
$$
C(\sigma,\alpha;t)=\sum_{\beta\leq \alpha} (-1)^{z(\beta)-z(\alpha)}
t^{n+\kappa(\sigma,\beta)-z(\beta)-z(\sigma)}, 
$$
which can easily be seen to be equivalent to the usual definition of the
flow polynomial. For any collection of hypermaps, the above definition,
combined with the M\"obius inversion formula yields
\begin{align*}
  \sum_{\beta\leq \alpha} C(\sigma,\beta;t) &=
  \sum_{\beta\leq \alpha} \sum_{\beta'\leq \beta} \mu(\beta',\beta)
  t^{n+\kappa(\sigma,\beta')-z(\beta')-z(\sigma')}\\
  &=\sum_{\beta'\leq \beta}
  t^{n+\kappa(\sigma,\beta')-z(\beta')-z(\sigma')} \sum_{\beta\in
    [\beta',\alpha]}  \mu(\beta',\beta)\\
  &=\sum_{\beta'\leq \beta}
  t^{n+\kappa(\sigma,\beta')-z(\beta')-z(\sigma')} \delta_{\beta',\alpha},
\end{align*}  
that is,
\begin{equation}
\label{eq:flow}
\sum_{\beta\leq \alpha}
C(\sigma,\beta;t)=t^{n+\kappa(\sigma,\alpha)-z(\alpha)-z(\sigma)}. 
\end{equation}

For collections of hypermaps whose hyperedges have at most $3$ points,
equation~(\ref{eq:flow}) may be interpreted as a result about actual
flows, as defined by Cori and Mach\`\i~\cite{Cori-Machi-flow}.
\begin{definition}
Given a field $K$ and a collection of hypermaps $(\sigma,\alpha)$ on the
set of points $\{1,2,\ldots,n\}$, a {\em flow} on $(\sigma,\alpha)$ is a
function $f:\{1,2,\ldots,n\}\rightarrow K$ such that the set $C$ of
points of any cycle of $\sigma$ or $\alpha$ satisfies
\begin{equation}
\label{eq:flowdef}
\sum_{i\in C} f(i)=0.
\end{equation}
\end{definition}  
It is easy to verify that for maps, the above definition specializes to
the usual definition of a flow (circulation). The set of all flows on a
hypermap forms a vectorspace under pointwise addition and scalar
multiplication. It has been shown by Cori and Mach\`\i~\cite[Lemma
  2]{Cori-Machi-flow} that the dimension of the space of flows is
$n+1-z(\sigma)-z(\alpha)$. Their reasoning is applicable to collections
of hypermaps componentwise, and we obtain the following statement.
\begin{lemma}[Cori--Mach\`\i]
For a collection of hypermaps $(\sigma,\alpha)$ on the set
$\{1,2,\ldots,n\}$ the dimension of the space of flows is
$n+\kappa(\sigma,\alpha)-z(\sigma)-z(\alpha)$.
\end{lemma}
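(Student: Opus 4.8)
The plan is to reduce the statement to the single-hypermap case established by Cori and Mach\`\i~\cite[Lemma~2]{Cori-Machi-flow}, by splitting $(\sigma,\alpha)$ into its connected components and checking that the space of flows splits accordingly as a direct sum. This is the flow-space analogue of the multiplicative behaviour recorded in Proposition~\ref{prop:product}.

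First I would write $\{1,2,\ldots,n\}=P_1\cup\cdots\cup P_{\kappa(\sigma,\alpha)}$ for the partition of the point set into the orbits of the permutation group generated by $\sigma$ and $\alpha$, and let $(\sigma_i,\alpha_i)$ be the restriction of $(\sigma,\alpha)$ to $P_i$. Each $(\sigma_i,\alpha_i)$ is a genuine hypermap: by construction $\sigma_i$ and $\alpha_i$ generate a transitive permutation group on $P_i$. Writing $n_i=|P_i|$, we have $n=\sum_i n_i$, and since every cycle of $\sigma$ (respectively of $\alpha$) is entirely contained in a single $P_i$, we also have $z(\sigma)=\sum_i z(\sigma_i)$ and $z(\alpha)=\sum_i z(\alpha_i)$.

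The key observation is that a function $f\colon\{1,2,\ldots,n\}\to K$ is a flow on $(\sigma,\alpha)$ if and only if each restriction $f|_{P_i}$ is a flow on $(\sigma_i,\alpha_i)$: the defining equations~\eqref{eq:flowdef} are indexed by the cycles of $\sigma$ and of $\alpha$, and each such cycle lies inside a single $P_i$, so the linear system decouples over the components. Consequently the space of flows of $(\sigma,\alpha)$ is the internal direct sum of the spaces of flows of the $(\sigma_i,\alpha_i)$, and its dimension is the sum of the dimensions of the summands.

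Finally I would apply the Cori--Mach\`\i formula to each component, namely $\dim\operatorname{Flow}(\sigma_i,\alpha_i)=n_i+1-z(\sigma_i)-z(\alpha_i)$, and sum over $i$. Using $\sum_i n_i=n$, $\sum_i z(\sigma_i)=z(\sigma)$, $\sum_i z(\alpha_i)=z(\alpha)$, and $\sum_i 1=\kappa(\sigma,\alpha)$ yields the claimed dimension $n+\kappa(\sigma,\alpha)-z(\sigma)-z(\alpha)$. There is no genuine obstacle here: the only things to verify are that each component is transitive (hence a hypermap in the sense of Cori and Mach\`\i) and that the flow space decomposes as a direct sum, both of which are immediate from the definitions. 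If one preferred a self-contained argument, the work would instead lie in reproving their linear-algebra rank computation directly, but that presents no new difficulty.
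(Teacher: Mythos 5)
Your proposal is correct and matches the paper's intent exactly: the paper gives no detailed proof, merely noting that the Cori--Mach\`\i\ argument ``is applicable to collections of hypermaps componentwise,'' which is precisely the orbit decomposition, direct-sum splitting of the flow space, and summation of $n_i+1-z(\sigma_i)-z(\alpha_i)$ over components that you carry out. No gaps.
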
  
\begin{corollary}
\label{cor:nflows}
If $K$ is a finite field with $q$ elements and $(\sigma,\alpha)$  is a
collection of hypermaps  on the set $\{1,2,\ldots,n\}$, then the number
of flows $f:\{1,2,\ldots,n\}\rightarrow K$ is
$q^{n+\kappa(\sigma,\alpha)-z(\sigma)-z(\alpha)}$.   
\end{corollary}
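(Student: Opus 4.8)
The plan is to obtain the statement as an immediate consequence of the Cori--Mach\`\i\ dimension formula recorded just above. First I would note that, by~\eqref{eq:flowdef}, the set $F$ of all flows $f:\{1,2,\ldots,n\}\rightarrow K$ is exactly the solution set of the homogeneous linear system over $K$ consisting of one equation $\sum_{i\in C} f(i)=0$ for each cycle $C$ of $\sigma$ and each cycle $C$ of $\alpha$. Writing $A$ for the $(z(\sigma)+z(\alpha))\times n$ matrix over $K$ whose rows are the indicator vectors of these point sets, we have $F=\ker A$, so $F$ is a $K$-linear subspace of $K^n$. The Cori--Mach\`\i\ lemma then gives $\dim_K F = d$, where $d=n+\kappa(\sigma,\alpha)-z(\sigma)-z(\alpha)$.

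Next I would invoke the elementary counting principle that a vector space of dimension $d$ over a field with $q$ elements has exactly $q^d$ elements: fixing a $K$-basis $v_1,\ldots,v_d$ of $F$, the map $(c_1,\ldots,c_d)\mapsto c_1v_1+\cdots+c_dv_d$ is a bijection from $K^d$ onto $F$, and $|K^d|=q^d$. Combining this with the dimension count yields $|F|=q^{\,n+\kappa(\sigma,\alpha)-z(\sigma)-z(\alpha)}$, which is the assertion of the corollary.

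There is essentially no obstacle here; the corollary is just the passage from a dimension statement to a cardinality statement. The only point one should be mildly attentive to is that the Cori--Mach\`\i\ dimension formula is being applied over the finite field $K$ rather than over a field of characteristic zero, but this is already built into the setup, since ``the space of flows'' in the preceding lemma is by definition the $K$-vector space $\ker A$ and the stated dimension is its dimension over $K$. Hence nothing beyond the lemma and linear algebra over $K$ is needed.
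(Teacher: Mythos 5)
Your argument is correct and is exactly the intended one: the paper states this as an immediate corollary of the Cori--Mach\`\i\ lemma (giving no separate proof), relying on precisely the observation that a $K$-vector space of dimension $d$ has $q^d$ elements when $|K|=q$. Your write-up just makes that implicit step explicit.
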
  

Using equation~(\ref{eq:flowdef}) we may generalize Tutte's
results~\cite{Tutte} on the number of nowhere zero flows as follows.
\begin{definition}
Let $K$ be a field and $(\sigma,\alpha)$ be a collection of hypermaps on
the set $\{1,2,\ldots,n\}$. We say that the flow
$f:\{1,2,\ldots,n\}\rightarrow K$ is
a {\em nowhere zero flow} if $f(i)=0$ implies that $(i)$ is a cycle of
$\alpha$. 
\end{definition}  
\begin{remark}
Note that in the case when $(i)$ is a cycle of $\sigma$ or $\alpha$,
equation~\eqref{eq:flowdef} forces $f(i)=0$. Consider the special case when
$(\sigma,\alpha)$ is a map. In the case when $(i)$ is a
cycle of $\sigma$, the underlying graph
has a leaf vertex and there is no nowhere zero flow on the graph. On
the other hand, deleting and contracting edges creates fixed points of
$\alpha$ in our setting, hence we do not want to say a flow is nowhere
zero just because it is zero on a fixed point of $\alpha$. 
\end{remark}

\begin{theorem}
\label{thm:nzflow}  
Let $K$ be a finite field with $q$ elements and let $(\sigma,\alpha)$ be
a collection of hypermaps on the set $\{1,2,\ldots,n\}$ such that each
cycle of $\alpha$ has at most three elements. Then $C(\sigma,\alpha;q)$
is the number of nowhere zero flows $f:\{1,2,\ldots,n\}\rightarrow K$
on $(\sigma,\alpha)$. 
\end{theorem}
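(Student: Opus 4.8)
The plan is to evaluate the defining sum of $C(\sigma,\alpha;t)$ at $t=q$ using Corollary~\ref{cor:nflows}, and then reorganize it by M\"obius inversion over the refinement poset, with the length hypothesis entering at exactly one point. By Corollary~\ref{cor:nflows} the number of flows of a collection of hypermaps $(\sigma,\beta)$ over $K$ is $q^{n+\kappa(\sigma,\beta)-z(\sigma)-z(\beta)}$, so
\[
C(\sigma,\alpha;q)=\sum_{\beta\le\alpha}\mu(\beta,\alpha)\cdot\#\{f\colon f\text{ is a flow of }(\sigma,\beta)\}.
\]
The first point to record is that $\beta\le\alpha$ forces every flow of $(\sigma,\beta)$ to be a flow of $(\sigma,\alpha)$: the cycles of $\sigma$ are the same, and the point set of each cycle of $\alpha$ is partitioned by the point sets of the cycles of $\beta$ refining it, so condition~\eqref{eq:flowdef} at a cycle of $\alpha$ is implied by~\eqref{eq:flowdef} at the cycles of $\beta$. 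Hence one may rewrite the sum by first fixing a flow $f$ of $(\sigma,\alpha)$ and then summing $\mu(\beta,\alpha)$ over those $\beta\le\alpha$ for which $f$ is still a flow of $(\sigma,\beta)$.

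For such an $f$ put $S_f=\{\beta\le\alpha\colon f\text{ is a flow of }(\sigma,\beta)\}$. This set is upward closed in $[\id,\alpha]$, since coarsening a partition only replaces some blocks by unions of them, on which the sum of $f$ remains zero. The key claim --- and the only place where ``each cycle of $\alpha$ has at most three elements'' is used --- is that $S_f$ is an interval $[\beta_f,\alpha]$, i.e.\ that it has a least element. Using the decomposition of $[\id,\alpha]$ into a product of factors $\operatorname{NC}(c_i)$ indexed by the cycles of $\alpha$, this reduces to a single cycle $B$ of $\alpha$ with $|B|\le 3$ and the assertion that, among the (noncrossing, hence all) partitions of $B$ on whose blocks $f$ sums to zero, there is a unique finest one $\beta^{(B)}$. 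This is checked by inspection: the cases $|B|\le 1$ and $|B|=2$ are immediate, and for $|B|=3$ with $f|_B=(a,b,c)$, $a+b+c=0$, a two-block partition $\{i,j\}\{k\}$ works iff $f(k)=0$, the all-singletons partition works iff $a=b=c=0$, and no pair-sum can vanish unless the remaining value does; so either all of $a,b,c$ vanish, or exactly one does, or none does, and in each case there is a unique finest good partition. In particular $\beta^{(B)}$ is the full block $\{B\}$ precisely when $f$ is nonzero on all of $B$, and it agrees with $\alpha$ on fixed points of $\alpha$ automatically; taking the product over the cycles $B$ gives $S_f=[\beta_f,\alpha]$.

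Granting this, the proof finishes with the interval form of M\"obius summation:
\[
C(\sigma,\alpha;q)=\sum_{\beta\le\alpha}\mu(\beta,\alpha)\sum_{f\text{ flow of }(\sigma,\alpha)}[\,f\text{ is a flow of }(\sigma,\beta)\,]
=\sum_{f\text{ flow of }(\sigma,\alpha)}\ \sum_{\beta\in[\beta_f,\alpha]}\mu(\beta,\alpha)
=\sum_{f\text{ flow of }(\sigma,\alpha)}\delta_{\beta_f,\alpha},
\]
so $C(\sigma,\alpha;q)$ counts exactly the flows $f$ with $\beta_f=\alpha$. By the case analysis above, $\beta_f=\alpha$ means $\beta^{(B)}=\{B\}$ for every cycle $B$ of $\alpha$, equivalently $f(i)=0$ only when $(i)$ is a cycle of $\alpha$; that is, $f$ is a nowhere zero flow, which is the claim. (As a consistency check, summing the penultimate expression over all $\alpha'\le\alpha$ recovers~\eqref{eq:flow}, since every flow of $(\sigma,\alpha)$ has exactly one ``type'' $\beta_f$.)

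The step I expect to be the main obstacle is the single-cycle claim that $S_f$ has a least element, and more to the point that $\beta_f=\alpha$ detects nowhere-zero-ness. This genuinely fails once cycles of $\alpha$ may have length $4$: with $\alpha=(1,2,3,4)$ and $f|_B=(1,-1,1,-1)$ (which sums to zero), the noncrossing partitions of $B$ whose blocks carry zero sums contain the two incomparable partitions $\{1,2\}\{3,4\}$ and $\{1,4\}\{2,3\}$ but not their common refinement, so $S_f$ has two minimal elements and $\sum_{\beta\in S_f}\mu(\beta,\alpha)$ no longer collapses to a Kronecker delta --- this is the flow-theoretic shadow of the crossing obstruction in Example~\ref{ex:4cycle}, and it is exactly what the hypothesis $|B|\le 3$ rules out.
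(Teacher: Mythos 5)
Your proof is correct and follows essentially the same route as the paper's: both arguments reduce to the key fact that, because every cycle of $\alpha$ has length at most three, each flow $f$ admits a unique finest refinement $\beta_f\le\alpha$ on whose cycles $f$ still sums to zero (and $f$ is nowhere zero exactly on $(\sigma,\beta_f)$), and then apply M\"obius inversion together with Corollary~\ref{cor:nflows}. The only cosmetic difference is that you collapse the sum directly via $\sum_{\beta\in[\beta_f,\alpha]}\mu(\beta,\alpha)=\delta_{\beta_f,\alpha}$, whereas the paper packages the same inversion as an induction on $n-z(\alpha)$ through Equation~\eqref{eq:flow}.
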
  
\begin{proof}
By Corollary~\ref{cor:nflows} the number of all flows on
$(\sigma,\alpha)$ is
$q^{n+\kappa(\sigma,\alpha)-z(\sigma)-z(\alpha)}$. The statement may be
shown by induction on $n-z(\alpha)$ using \eqref{eq:flow} once we prove
the following statement: for each flow $f:\{1,2,\ldots,n\}\rightarrow K$
there is a unique refinement $\beta$ of $\alpha$ such that $f$ is a
nowhere zero flow on $(\sigma,\beta)$. To show this statement note that
whenever $f(i)=0$ holds, $f$ is also a flow on $(\sigma,
\alpha(\alpha^{-1}(i),i))$ obtained by deleting $(\alpha^{-1}i,i)$. This
amounts to removing $i$ from the cycle of $\alpha$ containing it and
adding $i$ as a bud. Performing this operation on an edge $(i,j)$
results in a pair of buds $(i)(j)$, performing it on a hyperedge
$(i,j,k)$ results in a pair of an edge and of a bud $(i)(j,k)$. We may
repeat this refinement operation for each zero of $f$ in any order and
we always arrive at the same refinement $\beta$ that has the following
property: $f(i)=0$ holds if and only if $i$ is a fixed point of
$\beta$. Note that $\beta$ can not be refined further: if
$(i,\ldots,j)$ is a cycle of $\beta$ of length $2$ or $3$ and
$f(i)\cdots f(j)\neq 0$ holds, then any refinement of this cycle
would contain at least one fixed point, without loss of generality
$i$, and $f$ can not be a flow on this refinement because of $f(i)\neq 0$. 
\end{proof}
The key property we used in the proof of Theorem~\ref{thm:nzflow} is the
following: any refinement of a cycle of length at most $3$ contains a
fixed point. For larger hyperedges we may encounter difficulties as
exhibited in the example below.
\begin{example}
  Consider the hypermap $(\sigma,\alpha)$ on the set 
  $\{1,2,\ldots,8\}$, given by 
  $\sigma=(1,5)(2,6)(3,7)(4,8)$ and
  $\alpha=(1,2,3,4)(5,6)(7,8)$. It is easy to verify that the
  function $f:\{1,2,\ldots,8\}\rightarrow K$ given by $f(i)=(-1)^i$ is a
  nowhere zero flow on $(\sigma,\alpha)$. However $f$ is also a nowhere
  zero flow for $(\sigma,\beta_1)$ and $(\sigma,\beta_2)$ where
  $\beta_1$ is the refinement $\beta_1=(1,2)(3,4)(5,6)(7,8)$
  and $\beta_2$ is the refinement $\beta_2=(1,4)(2,3)(5,6)(7,8)$.
\end{example}

For a planar hypermap, equation~(\ref{eq:flow}) may be
rewritten as    
\begin{equation}
\label{eq:flow-planar}
\sum_{\beta\leq \alpha}
tC(\sigma,\beta;t)=t^{z(\alpha^{-1}\sigma)}. 
\end{equation}  
For collections of maps, (\ref{eq:flow-planar}) has the following
combinatorial interpretation. For a positive integer $t$, the right hand
side counts the number of ways of coloring the faces of
$(\sigma,\alpha)$. On the left hand side, $\beta$ represents the set of
edges that have faces of the same color on both sides. This reasoning
can be extended to hypermaps with hyperedges containing cycles of length
at most three. The details are left to the reader, as the reasoning is
not only analogous to that on the chromatic polynomial, but also dual to
it. The well-known duality of the chromatic an flow polynomials of
planar graphs may be extended to planar hypermaps with hyperedges
containing at most three points.

\section*{Acknowledgments}
The second author wishes to express his heartfelt thanks to Labri, Universit\'e
Bordeaux I, for hosting him as a visiting researcher in Summer 2019 and
in Summer 2023, where a great part of this research was performed. This
work was partially supported by a grant from the Simons Foundation 
(\#514648 to G\'abor Hetyei).

\end{document}